\newcommand{\der}{\mathrm{d}}
\newcommand{\extder}{\mathbf{d}}
\newcommand{\coder}{\extder^*}
\DeclareMathOperator{\image}{Im}
\DeclareMathOperator{\kernel}{Ker}
\newcommand{\nforms}[1]{\Omega^{#1}\manifold}
\newcommand{\harmonicpr}{\mathbf{H}}
\newcommand{\greenop}{\mathbf{G}}
\newcommand{\multiplyfunc}[1]{\mathfrak{m}({#1})}
\newcommand{\prob}{\mathbb{P}}
\newcommand{\Expectation}{\mathbb{E}}
\DeclareMathOperator{\Var}{Var}
\newcommand{\manifold}{\mathcal{M}}
\newcommand{\gradient}{\nabla}
\newcommand{\divergence}{\nabla\cdot}
\newcommand{\tanbdl}{T\manifold}
\newcommand{\cotbdl}{T^*\manifold}
\newcommand{\hodgelplc}{\Delta}
\newcommand{\leray}{\mathbf{\Pi}}
\newcommand{\identity}{\mathbf{I}}
\newcommand{\quavar}[1]{\left< #1 \right>}
\newcommand{\norm}[2]{\left\lVert #1 \right\rVert_{#2}}
\newcommand{\LtwoMnorm}[1]{\left\lVert#1 \right\rVert_{L^2 (\manifold)}}
\newcommand{\lpmani}[2]{\left\lVert#1 \right\rVert_{L^{#2} (\manifold)}}
\newcommand{\hnormm}[2]{\left\lVert #1 \right\rVert_{H^{#2} (\manifold)}}
\DeclareMathOperator{\tr}{Tr}
\newcommand{\ltwotan}[1]{\left\lVert #1 \right\rVert_{L^2 (\manifold,\tanbdl)}}
\newcommand{\cknorm}[2]{\left\lVert #1 \right\rVert_{C^{#2}}}
\newcommand{\hsnorm}[1]{\left\lVert#1 \right\rVert_{\mathrm{HS}}}
\newcommand{\anorm}[2]{\left\lVert #1 \right\rVert_{H_A^{#2}}}
\newcommand{\opnorm}[1]{\left\lVert #1 \right\rVert_{L^2\rightarrow L^2}}
\newcommand{\lpnormprob}[2]{\left\lVert #1 \right\rVert_{L^{#2} (\prob)}}
\newcommand{\swhitenoise}{\eta}
\newcommand{\stwhitenoise}{\xi}
\newcommand{\cylindricalFunctions}{\mathcal{F}\mathcal{C}_b^2}
\newcommand{\generator}[1]{\mathcal{L}_{#1}}
\newcommand{\ulimit}{u^{ (\infty)}}
\newcommand{\newprob}{\tilde{\mathbb{P}}}
\newcommand{\newexp}{\tilde{\Expectation}}
\newtheorem{defn}{Definition}[section]
\newtheorem{prop}[defn]{Proposition}
\newtheorem{lma}[defn]{Lemma}
\newtheorem{thm}[defn]{Theorem}
\newtheorem{cor}[defn]{Corollary}
\newtheorem{rmk}[defn]{Remark}
\newtheorem{definition}{Definition}%
\numberwithin{equation}{section}
\begin{document}

\title[Scaling Limits of Stochastic Transport Equations on Manifolds]{Scaling Limits of Stochastic Transport Equations on Manifolds}


\author*[1]{\fnm{Wei} \sur{Huang}}\email{wei.huang@fu-berlin.de}



\affil*[1]{\orgdiv{Institute f\"{u}r Mathematik}, \orgname{Freie Universit\"{a}t Berlin}, \orgaddress{\street{Arnimallee 7}, \city{Berlin}, \postcode{14129}, \state{Berlin}, \country{Germany}}}




\abstract{
In this work, we generalize some results on scaling limits of stochastic transport equations on the torus, developed recently by Flandoli, Galeati and Luo in \cite{galeati2020convergence,flandoli2020convergence,
flandoli2024quantitative}, to manifolds. We consider the stochastic transport equations driven by  colored space-time noise (smooth in space, white in time) on a compact Riemannian manifold without boundary. Then we study the scaling limits of stochastic transport equations, tuning the noise in such a way that the space covariance of the noise on the diagonal goes to the identity matrix but the covariance operator itself goes to zero. This includes the large scale analysis regime with diffusive scaling.
    We obtain different scaling limits depending on the initial data. With space white noise as initial data, the solutions to the stochastic transport equations converge in distribution to the solution to a stochastic heat equation with additive noise. With square integrable initial data, the solutions to the stochastic transport equations converge to the solution to the deterministic heat equation, and we provide quantitative estimates on the convergence rate. 
    }

\keywords{stochastic transport equations, scaling limit, stochastic partial differential equations}



\maketitle

\section{Introduction}
    \subsection{Background and Motivation}
	The transport equation
	\begin{equation}
	\partial_t u_t (x)=-v_t (x)\cdot \nabla u_t (x)
	\end{equation}
	describes the advection of scalar quantities (chemical density, temperature) in space under incompressible flow with divergence free velocity field $v_t (x)$ (with non divergence-free velocity field the equation we should consider the continuity equation $\partial_t u_t (x)=- \nabla\cdot   (u_t (x)v_t (x))$). This equation can be solved using the characteristic line method and the solution is  the initial value transported by the flow generated by the velocity field.
	
	In this paper, we are going to discuss the properties of the stochastic transport equations, with the deterministic velocity field $v$ replaced by the time derivative of some $Q$-Wiener process $-W_t (x)$ which is still divergence-free, with quadratic variation $$\quavar{W^i (x),W^j (y)}_t=tQ^{ij} (x,y).$$
	
	We consider the following stochastic transport equation, 
	\begin{equation}
	\der u_t (x) = \circ \der W_t (x) \cdot \nabla u_t (x),
	\end{equation}
	where $W$ is a vector-valued noise term that drives the flow. The transport noise that is white in time and correlated in space plays a crucial role in fluid dynamics. It appears in the celebrated Kraichnan model in turbulence theory (see \cite{kraichnan1968small}) and arises naturally in various fluid equations derived from stochastic variational principles (see \cite{holm2015variational}).
    
    The stochastic integral has to be understood in Stratonovich sense so that the solution can still be given as the transport of the initial value, since the characteristic line method relies on the chain rule. Another reason for using Stratonovich integrals instead of It\^o ones is that we want to consider the problem on manifolds, which will involve change of coordinates.
	
	The well-posedness of the stochastic transport equations with noises that are regular enough in space has been well studied a long time ago (see for example \cite{bismut1982mecanique,kunita1986lectures,kunita1990stochastic}), and even with lower regularity in time, well-posedness can be established with rough path theory as long as the space regularity is nice (see \cite{bellingeri2021transport}).  But the equation is ill-posed when the noise is too irregular in space. In fact, when $\Expectation \lVert W_1 \rVert_{L^2}$ is infinity, the It\^o-Stratonovich corrector will become ``$+\infty \Delta u$'', which would force the solution to mix immediately to a constant. A rigorous argument can be found in \cite{flandolikolmogorov} (Theorem 1.3).
	
	In order to make sense of the equation in that case, we have to mollify and rescale the noise. 
    In a series of recent works by Flandoli, Luo and Galeati, (\cite{flandoli2020convergence}, \cite{galeati2020convergence}, \cite{flandoli2024quantitative}) they found a proper scaling to get non-trivial limits of stochastic transport equations and some other SPDEs with transport noises including stochastic 2D Euler equation (\cite{flandoli2021scaling}) and stochastic 2D inviscid Boussinesq equations (\cite{luo2021convergence}). 

    We have much freedom in choosing the scaling of noises to maintain scaling limits, among which an important case is to study the large scale behavior, similar to those in the study of critical and supercritical SPDEs, for example 2D Anisotropic KPZ  and Burgers for $d\geq 2$ in \cite{gu2020gaussian}, \cite{cannizzaro2023weak}, \cite{cannizzaro2023gaussian}. More precisely, it falls into the regime for studying supercritical SPDEs like  3D Burgers in \cite{cannizzaro2023gaussian} with diffusive scaling. There will be a more detailed discussion later in this section and in Section \ref{scaling}.
    
	In \cite{galeati2020convergence} and \cite{flandoli2024quantitative}, they considered the stochastic transport equations on the torus (they also considered more complicated equations with nonlinear terms or deterministic drift, while the simplest case still reveals the key ingredient of the proofs).
	\begin{equation}
	\der u^{ (N)}_t= \circ \der W^{ (N)}_{t} \cdot \nabla u^{ (N)}_t,
	\end{equation}
	with the noise scaled like $\der W^{ (N)}=\Theta^{ (N)}\xi$, where $\Theta^{ (N)}$ are Fourier multipliers and $\xi$ is the vector-valued space-time white noise, or in other words $$W^{ (N)}= \sum_k \theta_k^{ (N)}\Psi_k (x)B^k,  $$ where $\Psi_k$ are some vector times the Fourier modes that form an orthonormal basis of the space of divergence-free vector fields, and $B^k$ are independent Brownian motions. $\theta_k^{ (N)}$ are real numbers and $\theta_k^{ (N)}=\theta_l^{ (N)}$ when the corresponding Laplace eigenvalue of $\Psi_k,\Psi_l$ are the same.
	
	They showed that if the $l^2$ norms of $\theta^{ (N)}$ are fixed  (or converge to some positive value), and the $l^{\infty}$ norms go to 0, then the solutions converge to a parabolic equation. If the initial value is regular enough (see \cite{galeati2020convergence}), then the limit is the solution to a deterministic heat equation while if the initial value is space white noise, the limit is a stochastic heat equation with an additive noise. In the regular case, there is  a quantitative estimate of the convergence rate in \cite{flandoli2024quantitative} with $L^2$ initial value. In the white noise case, the convergence is given by tightness argument as in \cite{flandoli2020convergence} (they showed the convergence of stochastic Euler equation, but the techniques can be easily adapted to stochastic transport equation), so estimates on the convergence rate are unavailable. 
The scaling limit result is interpreted as the eddy dissipation emerging from small scale turbulence in fluid dynamics (see e.g. \cite{majda1999simplified}) in \cite{flandoli2024quantitative,flandoli2023stochastic}.

    We would also like to mention the connection between SPDEs with transport noises and interaction particle systems in environmental noises, which provides some intuitions for the additive noise that appears in the scaling limit of white noise solutions. We consider a particle system with non-interacting particles driven by environmental noises, \begin{equation*}
	    \der X^{(i)}_t= \circ \der W_t(X_t^{(i)}).
	\end{equation*} The empirical measure $u_t=\frac{1}{N}\sum_{i=1}^N \delta_{X^{(i)}_t}$ satisfies the  stochastic transport equation.
    Assume that $\frac{\sum_{i=1}^N \delta_{X^{(i)}_0}}{N} \rightarrow u_0$ for some regular initial data $u_0$, which can be obtained by LLN with IID starting points or inhomogeneous Poisson point processes. The limit, $u_t=\lim_{N\rightarrow \infty}\frac{\sum_{i=1}^N \delta_{X^{(i)}_t}}{N}$, satisfies the stochastic transport equation with regular initial value. It can be viewed as the simplest case of mean field limit of particle systems with environmental noises in \cite{coghi2016propagation}, with no interaction at all. In 
 \cite{guo2023scaling}, they treat the scaling limit while scaling the noise (in the same way as in the scaling limits of SPDEs with transport noises in \cite{flandoli2024quantitative}) and number of particles at the same time.
    
    The white noise solutions can be viewed as fluctuations of the particle system starting from a uniform Poisson point process (see \cite{shao2025fluctuation}). 
    We can also obtain the white noise solution by fluctuation of  particles with i.i.d. weights  (see \cite{Flandoli2018weak} for stochastic 2D Euler), with $u_t=\lim_{N\rightarrow\infty}\sum_{i=1}^N\frac{w_i \delta_{X^{(i)}_t
    }}{\sqrt N}$, where $w_i$ are mean zero square integrable i.i.d. random variables that are independent of the particles and the noise.  The scaling limit of the white noise solution (in \cite{flandoli2020convergence} or this paper) is  the fluctuation of particle systems with idiosyncratic noises as in \cite{wang2023gaussian}.   It is not surprising since the noises are scaled such that they become independent Brownian motions in the limit. The additional Laplacian in the limit equation corresponds to the generator of the Brownian motion and the additive noise in the limit corresponds to the martingale term in \cite{wang2023gaussian}.

    Among all the possible choices of scaling, a family of choices are related to the large scale behaviors of the stochastic transport equations, similar to the setup of stochastic Burgers in \cite{cannizzaro2023gaussian}. Suppose we have a large torus of size $\lambda$, with regularized noise $\dot W=\Theta (-\Delta)\leray \xi$ on the tangent bundle of the large torus, where $\leray$ is the Leray projection, $\Theta\in C^\infty ([0,\infty))\cap L^2 ( (0,\infty),x^{d/2-1}\der x)$.  We choose the noise so that they are isotropic and have similar energy spectrum and local correlation structure ($Q^\lambda(x,y)$). We require $\Theta$ to be in $L^2((0,\infty),x^{d/2-1})$ to meet the requirement in \cite{flandoli2024quantitative}, so that $\Theta(-\Delta)$ is Hilbert-Schmidt. 
    We consider the stochastic transport equation with white noise initial data on a large torus $$\der u^\lambda=\nabla u^\lambda \cdot\circ\der W^{\lambda}, \text{on} \:\lambda\mathbb{T}^d, $$
    and with diffusive scaling ${u}^{ (\lambda)}_t (x)=\lambda^{d/2} u^\lambda_{\lambda^2 t} (\lambda x)$, we pull it back to the unit torus. Then $\tilde{u}^{ (\lambda)}$ satisfies the stochastic transport equation
    \[\der {u}^{ (\lambda)}=\nabla {u}^{ (\lambda)} \cdot\circ\der W^{ (\lambda)} \text{on} \: \mathbb{T}^d, \]
    where $\dot W^{ (\lambda)}=\lambda^{-d/2} \Theta (-\lambda^{-2}\Delta)\leray \xi$. Here we use $\xi$ to denote the tangent vector valued space-time white noise on the unit torus. The noise term $W^{ (\lambda)}$ converges to zero ($\Theta (-\lambda^2\Delta)$ converges to $\Theta (0)$) but $\tilde{u}^{ (\lambda)}$ converges to $u$ with \[ \partial_t u= C \Delta u+(-2C\Delta)^{-1/2}\xi,\] with some constant $C$ proportional to $\| \Theta \|^2_{L^2 ( (0,\infty), x^{d/2-1}\der x)}$, since the constant in the scaling limit (\cite{flandoli2024quantitative,flandoli2020convergence})  is proportional to 
    \begin{align*}
        \sum_{x\in\mathbb{Z}^d}\lambda^{-d}\Theta^2(\lambda^{-2}|x|^2)\rightarrow\int_{\mathbb{R}^d}\Theta^2(|x|^2)\der x=C\| \Theta \|^2_{L^2 ( (0,\infty), x^{d/2-1}\der x)}.
    \end{align*}

    The additive noise in the limit are of the same form as in stochastic Burgers equations (\cite{cannizzaro2023gaussian}). 

    A similar large scale analysis also applies to $L^2$ solutions with slowly varying initial data (rescaled to keep the $L^2$ norm) $u^\lambda_0(\lambda x)=\lambda^{-d/2} u_0(x)$. 

    The scaling limits result with $L^2$ initial data can also be helpful in understanding the detection of coherent sets  (blocks of fluid that move with minimal dispersion) in geophysics  (\cite{banisch2017understanding,froyland2021detecting}). We can model the motion of the atmosphere or ocean in terms of a deterministic flow driven by a velocity field $b_t (x)$  plus some stochastic perturbation $\der W$ that is white in time and short range correlated in space. Then the coherent sets are ``stable'' sets under the perturbation, i.e. the sets that are not affected much by the perturbation.  
    Let $U_t$ be the time evolution of a set that transformed by the deterministic flow and $\tilde{U}_t$ be a set transformed by the perturbed flow. Then the indicator functions $1_{U_t}$ and $1_{\tilde{U}_t}$ satisfy the deterministic transport equation \[ \der u=- b \cdot \nabla u \]
    and stochastic transport equations
    \[ \der u=- (b+\circ \der W )\cdot \nabla u \]
    respectively. The scaling limit results indicate that when the spatial correlation of the noise decays fast, the solution to the stochastic transport equation is very close to the solution to the advection-diffusion equation \[ \der u=-b\cdot\nabla u+C\Delta u.\] Therefore, we can estimate the difference between $U_t$ and $\tilde{U}_t$ by studying the effect of the Laplacian in the equation.  In this particular problem, we need to consider the stochastic transport equation on the surface of Earth which is not a torus. Therefore, it is worthwhile to consider the generalization. Moreover, the techniques developed for stochastic transport equations on manifolds may also be applied to other SPDEs with transport noises, many of which are not on the torus in  applications. Large scale analysis is relevant in this context, as the characteristic length scales of atmospheric flows are significantly larger than the correlation length of small scale perturbations.
	
	In the previous works on the torus (\cite{galeati2020convergence,flandoli2020convergence,flandoli2021scaling,flandoli2024quantitative}), their methods rely on the Fourier series which is no longer available on general manifolds.  Moreover,  their condition for convergence cannot be adapted directly to manifolds, and we have to deal with non-constant It\^o-Stratonovich corrector, similar to the situation in \cite{flandoli2022eddy}.
    In this paper, we are going to generalize these results to compact Riemannian manifolds, which requires tools to replace the Fourier decomposition. We will use  heat kernel estimates on compact Riemannian manifolds. On  manifolds, due to lack of symmetry, it is difficult to force the covariance on the diagonal to be the identity matrix as on the torus, but we can still find a sequence of noises whose covariance on the diagonal converges to the identity matrix. Compared to \cite{flandoli2022eddy} where they construct a sequence of noises that has a lower bound of covariance which diverges to infinity, we get a sequence of noises whose covariance on the diagonal converges to the identity matrix.
    We will construct the noises via renormalization of heat-kernel-mollified space-time white noise on general manifold. The covariance on the diagonal is not fixed but converges to the identity matrix. To show the convergence of the covariance on the diagonal, we need the asymptotic expansion of the heat kernel on Riemannian manifolds.
    
    The results can be easily generalized further to  smooth compact manifolds with a canonical volume form or density, which also include symplectic manifolds.  We only need the Riemannian metric for technical reasons, as the transport equation is well defined on general smooth manifolds, and our techniques work as long as the conservation of $L^2$ norm holds, which can be obtained when the vector valued noise is divergence-free with respect to the volume. 
    However, we can artificially construct a Riemannian metric such that the volume induced by the Riemannian structure is the same as the canonical volume form. The choice of such Riemannian structures is not unique, but the norms defined with different Riemannian metrics are equivalent on compact manifolds. Therefore, our results are the same with possibly different constants in the estimates depending on the Riemannian metric.

	\subsection{Main results and structure of the paper} 
	
    In this paper, we will consider the stochastic transport equation,
    \begin{equation*}
	\der u_t (x) = \circ \der W_t (x) \cdot \nabla u_t (x),
	\end{equation*} on a $d$-dimensional smooth compact Riemannian manifold $\manifold$,
	where $W$ is a vector-valued noise term  that drives the flow. We will generalize the scaling limits results on the torus in previous works (\cite{galeati2020convergence,flandoli2020convergence,flandoli2024quantitative}) to manifolds. The main results will be presented in Section 4. We will prove that if we tune the noises such that $A^{ (N)} (x)=Q ^{(N)}(x,x)$ converges  to $g^{-1}$ in $C^\infty (\manifold, \tanbdl\otimes\tanbdl)$ (or converge to the identity matrix in $C^\infty(\cotbdl\otimes\tanbdl)$ if we identify the tangent bundle and cotangent bundle using the Riemannian metric) and $Q^{ (N)}$ converges to $0$ in $L^2 (\manifold\times\manifold, \tanbdl\boxtimes\tanbdl)$, then the solutions to the stochastic transport equations converge to solutions to heat equations. 
    
    The conditions in this paper cover the situation on the torus, while we do not limit our scaling limits results to isotropic noises. We cannot directly use the conditions in the previous works due to lack of symmetry on general manifolds.  The symmetry on torus guarantees that $Q^{ (N)} (x,x)$ is the identity matrix times a constant which does not depend on $x$ and is proportional to the trace of $Q^{ (N)}$ (=$l^2$ norm of $\theta^{ (N)}$).
    On manifolds the It\^{o}-Stratonovich corrector is $\divergence( A^{ (N)}\gradient)$, where $A^{ (N)} (x)=Q^{ (N)} (x,x)$. 
    To make it converge, we need the convergence of $A^{ (N)}$, while convergence of $l^2$ norms of $\theta^{ (N)}$ only guarantees it in some specific cases with nice symmetries. 
    On Riemannian manifolds, we would instead require that $A^{ (N)} (x)=Q^{(N)} (x,x)$ converges  to  $g^{ij}$ in $C^k (\manifold, \tanbdl\boxtimes\tanbdl)$ for some large enough $k$, and $Q^{ (N)}$ converges to $0$ in $L^2 (\manifold\times\manifold, \tanbdl\boxtimes\tanbdl)$ or equivalently the Hilbert-Schmidt norm of the operator (which we also denote by $Q^{(N)}$ by an abuse of notation) $\hsnorm{Q^{(N)}}$ converges to $0$. The convergence of Hilbert-Schmidt norm can also be replaced by the operator norm $\|Q^{(N)}\|_{L^2\rightarrow L^2}$ converges to $0$, since \[\|Q^{(N)}\|_{L^2\rightarrow L^2}\leq \hsnorm{Q^{(N)}}=\|Q^{(N)}\|_{L^2(\manifold\times\manifold)}, \]
    $$\hsnorm{Q^{(N)}}\leq \opnorm{Q^{(N)}}^{1/2}|\tr Q^{(N)}|^{1/2},$$ and $$\tr Q^{(N)}=\int_\manifold \tr A^{(N)}$$ is uniformly bounded when $A^{(N)}$ is uniformly bounded in $C(\manifold)$.  The conditions are the same as in the previous works (\cite{flandoli2024quantitative},\cite{galeati2020convergence}) on the torus with isotropic noises, but our new conditions do not require noises to be of that form. For non-isotropic noises on a domain that is not a torus, there are similar conditions in \cite{flandoli2022eddy}, where they let $A^{(N)}$ converge to infinity. The condition in our paper would indicate that the covariance on the diagonal to be almost fixed, while the correlation in different space points should vanish. Particles in the stochastic flows driven by those noises will become independent Brownian motions in the limit.  On general Riemannian manifolds, it is not known whether we can take a sequence of divergence-free noises such that their covariance are some constant times the identity matrix as on the torus. We will revisit the situation on the torus in Section \ref{torusscal} and give an example of proper scaling on general manifolds via mollified space-time white noise in Section \ref{scaling}, which falls into the large scale analysis regime.  

    We treat two different types of initial data, white noise initial data and square integrable initial data in Section \ref{s-cvg-wn} and Section \ref{s-cvg-regular}.
    
    With tightness argument, we prove  that the white noise solutions to the stochastic transport equations as processes in $C^\alpha ([0,T],H^{-d/2-2\alpha-})$ $  (\alpha\in[0,1/2))$  converge in law to  the stationary solution to the stochastic heat equation
    \[ \partial_t u=\frac{1}{2}\Delta u+  (-\Delta)^{1/2}\xi,\]
    where $\xi$ is the space-time white noise on the manifold.  

    When the initial condition is in $L^2$, the limit instead becomes solution to the deterministic heat equation
    \[ \partial_t u=\frac{1}{2}\Delta u.\] On manifolds, or on the torus with non-isotropic noises, we have an additional deterministic error term, which is the result of the difference between $Q (x,x)$ and the identity matrix. We can get quantitative estimates on the convergence rate of the stochastic part in Theorem \ref{thm-regularity-moment}, which is of the same order as in \cite{flandoli2024quantitative} and the rate is the  best to expect due to the CLT result in \cite{galeati2023ldp}. We also get the regularity in time of order $\alpha \in[0,1/2)$ in exchange of $2\alpha$ regularity in space while the previous works only considered continuity in time or $L^\infty$ bound in time. We get the $p$-th moment bound of $C^\alpha ([0,T], H^{-d/2-2\alpha-\varepsilon})$ norm of the stochastic part  (Corollary \ref{quantitative-stoch}). 

    We prove the results for $A$ converging to the identity matrix, but similar results can be obtained with the same techniques if the limit is a positive-definite-matrix-valued function $A^{ (\infty)}$ on $\manifold$. In this case, the Laplacian in the limit should be replaced by the divergence form second order elliptic operator $\nabla\cdot (A^{ (\infty)}\nabla)$. The further generalization to compact manifold with canonical density or volume can be obtained from this result. 

    \paragraph{Structure of the paper}
    We state some results in Section 2 from differential geometry that are needed to establish our results, among which the asymptotic expansion of heat kernel will be needed to construct a proper scaling on general manifolds. We also recall the stochastic integral in infinite dimensions, in a language slightly different to \cite{da2014stochastic}. We also prove some technical lemmas. In Section 3 we discuss the solution theory of the stochastic transport equation.  We also prove (probabilistically) strong well-posedness but (probabilistically) weak existence and prior estimates are sufficient for getting the scaling limits. We also prove regularity estimates  and some important properties like stationarity of white noise and   $L^2$ conservation, which are used in getting the scaling limits. The main results are stated and proved in section 4.

 \section{Preliminaries}

 \subsection{Differential geometry basics}

In this subsection, we will list the definitions and notations in differential geometry to establish the basic setup of the stochastic transport equations on a Riemannian manifold. Proofs can be found in basic differential geometry and Riemannian geometry textbooks like \cite{lee2013smooth},\cite{warner1983foundations}, \cite{petersen2016riemannian} and  \cite{lee2018introduction}. 

In the paper we will consider the stochastic transport equation on a given smooth compact Riemannian manifold without boundary $ (\manifold,g)$ of dimension $d\geq 2$, where $g (\cdot, \cdot)$ is the Riemannian metric, a smooth positive-definite inner product on the tangent bundle. In a local coordinate system, $g=g_{ij} \extder x^i \otimes \extder x^j$. For two vector fields $X,Y$, the inner product of them $X\cdot Y=g (X,Y)$ is given by $g_{ij} X^iY^j$. We use Einstein summation convention, which implies summations over pairs of upper and lower indices, to simplify the notations. 

We denote the tangent bundle by $\tanbdl$, which is canonically isomorphic to the cotangent bundle $\cotbdl$. We will identify the tangent and cotangent bundles with the canonical isomorphism given by the Riemannian metric $g$ via Riesz representation. In local coordinates, a vector field $X=X_i \frac{\partial}{\partial x^i}$ can be also viewed as the covector or 1-form $w=w_i\extder x^i$, where $w_i=g_{ij}X^j$. We will also do the same trick to their tensor products, for example $\cotbdl \otimes \tanbdl \cong \tanbdl \otimes \tanbdl$. In other words, we omit the musical notations, namely the sharps and flats which raise or lower the indices, if they can be indicated from the context.

There is a canonical measure determined by the Riemannian metric on the manifold. Throughout this article,  all the integrations of functions are integrated with respect to that measure. In coordinates, it can be expressed as $\sqrt{|g|}\der x^1 \cdots \der x^n$, where $|g|$ is the determinant of $g_{ij}$.  

The gradient of a function $f$ on the manifold is defined as for any smooth vector fields $X$, $\nabla f \cdot X= \extder f  (X)$. In coordinates, $ (\nabla f)^i=g^{ij}\partial_j f$, where $g^{ij}$ is the inverse of $g_{ij}$. The divergence is defined as the formal dual of minus gradient, which in coordinates can be written as $\nabla \cdot X=|g|^{-1/2}\partial_i  (X^i|g|^{1/2})$.

We define the Laplace-Beltrami operator $\Delta$ on manifolds to be the divergence of gradient, namely $\Delta=\nabla \cdot \nabla$. 
\begin{prop}
	Let $X$ be smooth vector field and $u,v$ are smooth functions on $\manifold$. Then
	\begin{enumerate}
		\item $\nabla (uv)=u\nabla v+ v\nabla u, \nabla \cdot  (uX)=\nabla u \cdot X + u \nabla \cdot X$.  (Leibniz rule)
		\item $\Delta (uv)= (\Delta u)v+2\nabla u\cdot \nabla v + u (\Delta v)$.  (Leibniz rule for the Laplacian)
		\item $\int_{\manifold}\nabla \cdot X (x)\der x=0$.  (Stokes' theorem)
		\item $\int_{\manifold} X (x) \cdot \nabla u (x)\der x=-\int_{\manifold}  (\nabla \cdot X) (x)u (x)\der x$.  (Integration by parts)
		\item $\int_{\manifold} (\Delta u (x))v (x)\der x = -\int_{\manifold} \nabla u (x) \cdot \nabla v (x) \der x= \int_{\manifold}u (x) (\Delta v (x))\der x$.  (Symmetry of $\Delta$)
	\end{enumerate}
\end{prop}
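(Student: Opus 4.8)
The plan is to observe that the five assertions form a short deductive chain in which only items (1) and (3) carry genuine content: item (2) follows by iterating (1), and items (4) and (5) follow by combining (1) with (3). Accordingly I would prove (1) by a direct local computation, prove (3) by a global argument, and then assemble the remaining three from these two.

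For item (1), I would work in an arbitrary coordinate chart and use the coordinate formulas from the definitions. For the gradient, $(\nabla(uv))^i = g^{ij}\partial_j(uv) = g^{ij}(u\partial_j v + v\partial_j u) = u(\nabla v)^i + v(\nabla u)^i$ by the ordinary Leibniz rule for $\partial_j$; since this holds in every chart, the identity is global. For the divergence, I would expand $\nabla\cdot(uX) = |g|^{-1/2}\partial_i(uX^i|g|^{1/2})$, apply the product rule to split off $\partial_i u$, and recognize the resulting term $(\partial_i u)X^i$ as $\der u(X) = \nabla u\cdot X$ (using $g_{ij}g^{ik} = \delta^k_j$), leaving $u|g|^{-1/2}\partial_i(X^i|g|^{1/2}) = u\,\nabla\cdot X$. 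Item (2) is then immediate: $\Delta(uv) = \nabla\cdot(u\nabla v + v\nabla u)$, and applying the divergence Leibniz rule to each summand yields $u\Delta v + v\Delta u + 2\nabla u\cdot\nabla v$, the two cross terms coinciding because the metric inner product is symmetric.

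The main obstacle is item (3), the divergence theorem, since it is the one statement that is not a pointwise identity and genuinely uses that $\manifold$ is compact without boundary. I would prove it by passing to differential forms: writing $\omega$ for the Riemannian volume form $\sqrt{|g|}\,\extder x^1\wedge\cdots\wedge\extder x^d$, a short coordinate check against the definition of $\nabla\cdot$ shows that $(\nabla\cdot X)\,\omega = \mathcal{L}_X\omega$ for the Lie derivative $\mathcal{L}_X$, and Cartan's formula together with $\extder\omega = 0$ (a top-degree form) gives $\mathcal{L}_X\omega = \extder(\iota_X\omega)$. Hence $\int_\manifold\nabla\cdot X\,\der x = \int_\manifold\extder(\iota_X\omega) = 0$ by Stokes' theorem, the boundary term vanishing because $\partial\manifold = \emptyset$. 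Equivalently, a partition-of-unity reduction to compactly supported vector fields in single charts reduces the claim to the fundamental theorem of calculus, using $\sum_\alpha\nabla\rho_\alpha = 0$ for a partition of unity $\{\rho_\alpha\}$.

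Finally I would derive (4) and (5) from what precedes. Integrating the divergence Leibniz rule over $\manifold$ and using (3) to annihilate $\int_\manifold\nabla\cdot(uX)$ gives $0 = \int_\manifold\nabla u\cdot X + \int_\manifold u\,\nabla\cdot X$, which is (4). Specializing (4) to $X = \nabla v$ gives $\int_\manifold\nabla u\cdot\nabla v = -\int_\manifold u\,\Delta v$; swapping the roles of $u$ and $v$ and using symmetry of $\nabla u\cdot\nabla v$ yields the matching identity with $\Delta u$, and the two together constitute the chain of equalities in (5).
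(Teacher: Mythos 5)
The paper never proves this proposition: it is stated as background in Section 2.1, with the proofs explicitly delegated to the cited textbooks (Lee, Warner, Petersen). Your argument is therefore the only actual proof on the table, and it is correct and complete, with the right economy: as you observe, only items (1) and (3) carry content, and (2), (4), (5) follow formally — (2) by applying the divergence Leibniz rule to $\nabla\cdot(u\nabla v + v\nabla u)$, (4) by integrating the divergence Leibniz rule and killing the total divergence with (3), and (5) by specializing (4) to $X=\nabla v$ and symmetrizing. Your coordinate computations for (1) match the paper's own coordinate definitions of $\nabla$, $\nabla\cdot$ and the measure $\sqrt{|g|}\,\der x^1\cdots \der x^d$, so everything is consistent with the conventions in force. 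One caveat on (3): the paper assumes $\manifold$ compact without boundary but never assumes orientability, and your primary argument via $(\nabla\cdot X)\,\omega=\mathcal{L}_X\omega=\extder(\iota_X\omega)$ plus Stokes' theorem for forms needs a global volume form, hence an orientation. Your alternative partition-of-unity argument — reducing to compactly supported fields in single charts, where $\int \partial_i\bigl(\rho_\alpha X^i|g|^{1/2}\bigr)\der x=0$ by the fundamental theorem of calculus — is the one that covers the general (possibly non-orientable) case, since it only uses the Riemannian density; either promote that argument to the main one, or note that one can pass to the orientable double cover.
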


\begin{rmk}
	The formulation of the stochastic transport equations does not depend on the metric once the noises are properly given. We need to give a specific metric to in order to have a reasonable choice of noises: the white noise on the tangent bundle and the mollified white noises. The divergence free condition on the driving noises gives a nice weak formulation and an It\^o-Stratonovich corrector in divergence form, which  only makes sense if there is a canonical density.
\end{rmk}

We denote the space of smooth $n$-forms by $\nforms{n}$. 

The inner product (denoted by `` $\cdot$ '') on a vector space $V$ induces a natural inner product on its exterior algebra $\Lambda^n (V)$, such that 
\[ (v_1\wedge\dots\wedge v_n)\cdot (w_1\wedge\dots \wedge w_n)=\det (v_i\cdot w_j)_{ij}. \]

Therefore, we can equip the space of $n$-forms with an inner product. For any $X,Y$ being sections (smooth functions with $x$ mapping to the fiber at $x$, for example, a section of the tangent bundle is a smooth vector field) of some inner product vector bundle, we define $ (X,Y)=\int_{\manifold}X (x)\cdot Y (x)\der x$, where the underlying measure is the one given by the Riemannian metric on $\manifold$. It induces a norm on the space of sections, which can be extended to $L^2$ functions and $ (\cdot,\cdot)$ will also denote the extension of inner product to the $L^2$ space.

We denote the dual of exterior derivative $\extder: \nforms{n}\rightarrow \nforms{n+1}$ by $\coder$. For $1$-forms, $\coder$ is  taking the minus gradient as vector fields. 

There is a natural connection induced by the Riemannian metric on the tangent space, called Levi-Civita connection. This connection can be extended naturally to the tensors and differential forms. With the Levi-Civita connection, we can take covariant derivatives on the manifold and do parallel transports of vectors along curves. A geodesic is a curve whose second order derivative is zero. With ODE theory we know that there is exactly one geodesic with given starting point and initial velocity. Suppose $p$ is a point in $\manifold$, and $X$ is a tangent vector at $p$. Then we define $\exp_p X$ to be the geodesic starting at $p$ with initial velocity $X$ at time 1. The exponential map is a local diffeomorphism, which induces local coordinates called normal coordinates on a neighborhood of $p$, which is the composition of the inverse of exponential map and an  isometry from $T_p\manifold$ to $\mathbb{R}^n$. 
Under the normal coordinates centered at $p$, $g$ at the origin of ($p$ on $\manifold$) is the identity matrix and its first order derivatives at $p$ are zero. Therefore, the Christoffel symbols are zero at $p$. The distance $d(x,y)$ of two points on the manifold is defined to be the infimum of the length of curves connecting them: $d(x,y)=\inf_{\gamma: \gamma_0=x,\gamma_1=y} \int_0^1|g(\dot\gamma_t,\dot\gamma_t)|^{1/2}\der t$. 

\subsection{Laplacians and Sobolev norms on Riemannian manifolds}

By \cite{taylor2011pseudodifferential}, any symmetric second order elliptic operator of the form $Lf=\nabla \cdot (A\nabla f)$ is a self-adjoint operator on $L^2(\manifold)$, where $A$ is a smooth symmetric 2-tensor that are positive definite at every point. Its eigenfunctions are smooth and we can choose a set of eigenfunctions that forms a ONS $\{\phi_j\}$ of $L^2(\manifold)$ and $-L\phi_j=\lambda_j\phi_j$. Then we can define the functional calculus of $-L$ by $f(-L)=\sum_jf(\lambda_j)(\cdot,\phi_j)\phi_j$, and for details of functional calculus, see \cite{einsiedler2017functional}.

In this paper, we will use the Sobolev norms defined by Bessel potential. Suppose $\alpha\in\mathbb{R}$, we define the Sobolev space $H^\alpha (\manifold)$ on the manifold to be $ (1-\Delta)^{-\alpha/2}{L^2 (\manifold)}$, equipped with the norm 
\begin{equation}
\hnormm{\phi}{\alpha}=\LtwoMnorm{ (1-\Delta)^{\alpha/2}\phi}=\left ( \phi,  (1-\Delta)^{\alpha}\phi \right)^{1/2}.
\end{equation}

By standard elliptic PDE theory (\cite{evans2010partial}), 
 we can also replace the $\Delta$ in the definition by some other elliptic operator $L$ defined by $Lf=\nabla\cdot(A\nabla f)$, where $A$ is a smooth uniformly elliptic 2-tensor field, to get another equivalent norm denoted by $\anorm{\cdot}{\alpha}$, with the controlling constants depending on the norms of $A$, $A^{-1}$ and their derivatives. To see this, we only need to verify the case when $\alpha$ is a positive even number, while the other cases can be obtained by interpolation or taking the dual. We fix a finite open cover $\{U_j\}$ of the manifold so that each open set there's a local coordinates $\varphi_j:U_j\rightarrow\mathbb{R}^d$. We choose a partition of unity $\{\phi_j\}$ such that the  $\operatorname{Supp} \phi_j \in U_j$. For any function $f$ on the manifold,  we can locally push forward $f\phi_j$ to a  function $(f\phi_j)\circ\varphi_j^{-1}$ on $\mathbb{R}^d$, whose support is in $\varphi_{j}(\operatorname{supp} \phi_j)$. Summing up the usual Sobolev norms of those functions, we get an alternative norm, which controls $\hnormm{\cdot}{\alpha}$. The equivalence of this norm and $\anorm{\cdot}{\alpha}$ follow by standard elliptic estimates (for example in \cite{evans2010partial}, using the interior regularity estimates iteratively) which also give a bound for the constants in terms of the norms of $A$,$A^{-1}$ and their (higher order) derivatives. Note that $\hnormm{\cdot}{\alpha}$ is  a special case of $\anorm{\cdot}{\alpha}$, so the norms are also equivalent. Besides,  the norm defined via local coordinates with different partitions and choices of coordinates are all equivalent. Those norms are also equivalent to the usual Sobolev norm defined with the covariant derivatives if $\alpha$ is an integer. The Sobolev embeddings and the product estimates on manifolds follows by standard localization and partition of unity arguments from results in the Euclidean case (see \cite{bahouri2011fourier}). 

For other vector bundles we can define Sobolev spaces similarly either with a elliptic operator (e.g. Laplacians on the tangent bundle) or via local coordinates. With the same type of arguments, those norms are equivalent. The vector-valued Sobolev spaces locally behave like vector-valued Sobolev spaces on Euclidean spaces. 

\subsection{Hodge theory and the Leray projection}
We define the Hodge Laplacian (sometimes also called Laplace-de Rham operator) to be $\hodgelplc=-\extder\coder-\coder\extder$. The Hodge Laplacian commutes with $\extder$ and $\coder$. It is a negative semidefinite operator (the sign is different from the usual geometer's). On $\Omega^0\manifold=C^\infty(\manifold)$, the Hodge Laplacian agrees with the Laplace-Beltrami operator. 



The Laplacians are defined on smooth sections but we can extend the definition to distributions $\mathcal{D}'(\manifold,\nforms{n})$ by $(\Delta \omega,\omega')=(\omega,\Delta \omega')$, for any $\omega'\in \mathcal{D}(\manifold,\nforms{n}),\omega\in \mathcal{D}'(\manifold,\nforms{n})$.

\begin{thm}[Hodge decomposition]
	Let $\hodgelplc$ be the Hodge Laplacian. Then the space of all $n$-forms $\nforms{n}$ has an orthogonal decomposition 
	\begin{equation}
	\nforms{n} = \image \extder_{n-1} \oplus \image \coder_{n+1} \oplus \kernel \hodgelplc_{n},
	\end{equation} where the lower indices indicate the domain  (for example $\der_n$ is the exterior derivative operator on $n$-forms). Moreover, $\kernel \Delta_n$ is finite dimensional and 
	\begin{equation}
	\kernel \hodgelplc_n=\kernel \extder_n \bigcap \kernel \coder_n,
	\end{equation} 
	\begin{equation}
	\kernel \extder_n=\image \extder_{n-1}\oplus \kernel \hodgelplc_n= (\image \coder_{n+1})^\perp,
	\end{equation}
	\begin{equation}
	\kernel \coder_n=  \image \coder_{n+1}\oplus \kernel \hodgelplc_n= (\image \extder_{n-1})^\perp.
	\end{equation}
\end{thm}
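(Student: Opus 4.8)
The plan is to derive everything from two ingredients: the integration-by-parts identities already available for $\extder$ and $\coder$, together with the elliptic theory of the Hodge Laplacian $\hodgelplc$ on the compact manifold $\manifold$. First I would record the two purely formal facts that drive the whole argument. Since $\coder$ is by definition the $L^2$-adjoint of $\extder$ and $\extder^2=0$, one also has $\coder{}^2=0$; and since
\[ (\hodgelplc\omega,\omega)=-(\extder\coder\omega,\omega)-(\coder\extder\omega,\omega)=-\LtwoMnorm{\coder\omega}^2-\LtwoMnorm{\extder\omega}^2\le 0, \]
the operator $\hodgelplc$ is negative semidefinite and an $n$-form is harmonic exactly when it is annihilated by both $\extder$ and $\coder$. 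This gives the identity $\kernel\hodgelplc_n=\kernel\extder_n\cap\kernel\coder_n$ immediately.

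The main analytic input is the elliptic theory for $\hodgelplc$, which plays the same role on forms that the self-adjointness and functional calculus cited above play on functions. Since $\hodgelplc$ is a self-adjoint elliptic second-order operator on $n$-forms over a compact manifold, G\aa rding's inequality together with Rellich compactness yields that $\hodgelplc$ is Fredholm with discrete spectrum, that $\kernel\hodgelplc_n$ is finite dimensional and consists of smooth forms, and that there is a Green operator $\greenop$, self-adjoint and commuting with $\extder,\coder,\hodgelplc$, satisfying $\hodgelplc\greenop=\greenop\hodgelplc=\identity-\harmonicpr$, where $\harmonicpr$ is the $L^2$-orthogonal projection onto $\kernel\hodgelplc_n$. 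Elliptic regularity guarantees that $\greenop$ maps smooth forms to smooth forms, so all the manipulations below stay within $\nforms{n}$ rather than its $L^2$ completion. I expect this Fredholm/Green-operator step to be the only genuine obstacle; the remainder is formal, and I would simply invoke the elliptic theory (as the paper already does for divergence-form operators via Taylor) rather than reprove it.

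Granting $\greenop$, the decomposition is produced by writing, for any $\omega\in\nforms{n}$,
\[ \omega=\harmonicpr\omega+\hodgelplc\greenop\omega=\harmonicpr\omega+\extder\!\left(-\coder\greenop\omega\right)+\coder\!\left(-\extder\greenop\omega\right), \]
which exhibits $\omega$ as a harmonic form plus an element of $\image\extder_{n-1}$ plus an element of $\image\coder_{n+1}$. The three subspaces are mutually $L^2$-orthogonal: $(\extder\alpha,\coder\beta)=(\alpha,\coder{}^2\beta)=0$ because $\coder{}^2=0$, while $(\extder\alpha,h)=(\alpha,\coder h)=0$ and $(\coder\beta,h)=(\beta,\extder h)=0$ for harmonic $h$. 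Hence the sum is direct and orthogonal, which is the first displayed decomposition.

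Finally, I would extract the kernel/image identities from this decomposition. The inclusions $\image\extder_{n-1}\subseteq\kernel\extder_n$ and $\kernel\hodgelplc_n\subseteq\kernel\extder_n$ are clear from $\extder^2=0$ and harmonicity. For the reverse inclusion, if $\extder\omega=0$ I decompose $\omega=h+\extder\alpha+\coder\beta$ and compute $\LtwoMnorm{\coder\beta}^2=(\coder\beta,\coder\beta)=(\beta,\extder\coder\beta)$; since $\extder\omega=\extder\coder\beta=0$, this forces $\coder\beta=0$, so $\omega\in\image\extder_{n-1}\oplus\kernel\hodgelplc_n$. Thus $\kernel\extder_n=\image\extder_{n-1}\oplus\kernel\hodgelplc_n$, and since this is precisely the orthogonal complement of the remaining summand $\image\coder_{n+1}$, it equals $(\image\coder_{n+1})^\perp$. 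The symmetric statement for $\coder$ follows by interchanging the roles of $\extder$ and $\coder$ throughout.
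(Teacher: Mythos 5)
Your proposal is correct and follows essentially the same route as the paper, whose entire ``proof'' of this theorem is a citation to \cite{demailly1997complex} and \cite{warner1983foundations}: the argument in those references is exactly your scheme of invoking the elliptic/Fredholm package to obtain the Green operator and then deriving the decomposition and kernel identities formally from $\extder^2=0$, adjointness, and $\hodgelplc\greenop=\greenop\hodgelplc=\identity-\harmonicpr$. The only discrepancy is a harmless sign convention (the paper defines $\greenop$ via $-\hodgelplc\greenop=\identity-\harmonicpr$ since its $\hodgelplc$ is negative semidefinite, while you absorb the sign into the potentials $-\coder\greenop\omega$ and $-\extder\greenop\omega$), and your formal verifications of orthogonality and of $\kernel\extder_n=\image\extder_{n-1}\oplus\kernel\hodgelplc_n$ are accurate.
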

The proof can be found in \cite{demailly1997complex} or \cite{warner1983foundations}.

We define $\harmonicpr$ to be the orthogonal projection onto the harmonic forms $\kernel \Delta_{n}$, and we define the Green's operator $\greenop$ by setting $\greenop \alpha$ to be the unique solution to $-\Delta \omega=\alpha-\harmonicpr \alpha$ in $ (\kernel \Delta_n)^\perp$.  The Green's operator commutes with all operators that commute with $\Delta$, including $\extder$ and $\coder$. $\lVert\greenop \alpha\rVert_{H^2}\lesssim \lVert \alpha\rVert_{L^2}$, so it can be extended to $L^2$ forms. 

For any $n$-form $\alpha$, we have the following decomposition
\begin{equation}
\alpha=\extder\coder \greenop \alpha + \coder \extder \greenop \alpha + \harmonicpr \alpha,
\end{equation}
where each of the three terms on the right hand side falls in the one of three components in the Hodge decomposition. Hence, the three terms are orthogonal. In particular, $\coder \extder \greenop \alpha + \harmonicpr \alpha$ is in $\kernel \coder_{n}$, and $ \extder\coder \greenop \alpha$ is orthogonal to $\kernel \coder_{n}$. Thus, the orthogonal projection  onto $\kernel \coder$, which we call the Leray projection and denote by $\leray$, can be expressed as $\leray=\coder \extder \greenop+ \harmonicpr$. Moreover, $\leray$ commutes with $\Delta$ and $\hodgelplc \leray=\leray \hodgelplc= -\coder \extder$.

\subsection{Heat kernel expansions and heat kernel estimates}\label{subsec heat kernel expansion}
Following a series of computations and theorems in \cite{berline1992heat} which are summarized in Theorem 2.30 of the book,
we have an asymptotic expansion of the heat kernel $p (x,y)$ with Hodge Laplacian on $1$-forms (in the reference the Laplacian can also be other generalized Laplacians) with 
\begin{equation}
p_t (x,y)\sim q_t (x,y)\sum_{k=0}^{\infty} t^k \Phi_k (x,y),
\end{equation}
where $q_t (x,y)= (4\pi t)^{-d/2}\exp (-d (x,y)^2/4t)$, 
$\Phi_k\in C^\infty(\manifold\times\manifold, \tanbdl\boxtimes \cotbdl)$, i.e. they are smooth functions and $\Phi_k(x,y)$ takes value in $T_x\manifold\otimes T_y^*\manifold$, which are given iteratively by some 1st order PDEs on a neighborhood of the boundary. $\Phi_k$ is not unique outside the diagonal but on the diagonal its value and derivatives are uniquely determined by the Riemannian manifold $\manifold$. As we do not need to know the concrete value of $\Phi_k$ for $k\geq 1$, we will not present it here. Readers interested in the computations may check the explicit formulas in \cite{berline1992heat}. We only need that $\Phi_0 (x,x)=\identity$, and that $\Phi_k$ are smooth.

Let $N>d/2$, 
\begin{equation}
    k_t^N (x,y)=q_t (x,y)\sum_{k=0}^{N-1} t^k \Phi_k (x,y).
\end{equation}
 Then for $k, l\in\mathbb{N}$.
\begin{equation}\label{eq: error of heat kernel expansion}
\lVert\partial_t^k ( p_t - k_t^N) \rVert_{C^l (\manifold\times\manifold)}=O (t^{N-d/2-l/2-k}).
\end{equation}

By restricting to the diagonal we get 
\begin{equation}
p_t (x,x)\sim  (4\pi t)^{-d/2}\sum_{k=0}^{\infty}t^k\Phi_k (x,x). 
\end{equation}
Now we prove some technical lemmas based on the heat kernel expansions.
We will  need the following lemma to get an asymptotic expansion of $\extder\coder e^{t\hodgelplc}$:
\begin{lma}\label{lma-dd}
	Suppose $y\in \manifold$, $\omega$ is a 1-form, and in a normal coordinate system around $y$, $f_t (x)=e^{-|x|^2/4t}$. Then at the point $y$, $$\coder \extder  (f_t\omega)=  \frac{d-1}{2t}\omega+\tilde{\omega},$$ with $\tilde{\omega}=\coder\extder \omega$.
\end{lma}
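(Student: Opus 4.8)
The plan is to work in the normal coordinate system centered at $y$ and to expand $\coder\extder(f_t\omega)$ by the Leibniz rules, evaluating the outcome at $y$. The reason normal coordinates are decisive is that at $y$ the metric is the identity, its first derivatives and the Christoffel symbols vanish, so covariant and partial derivatives agree there. Consequently $f_t(y)=1$, $\extder f_t(y)=0$ and $\nabla f_t(y)=0$, while the only surviving contribution of $f_t$ comes through its Hessian: from $\partial_i\partial_j f_t=(-\tfrac{\delta_{ij}}{2t}+\tfrac{x^ix^j}{4t^2})f_t$ one reads off $(\nabla^2 f_t)(y)=-\tfrac{1}{2t}\,\mathrm{Id}$ and $\Delta f_t(y)=-\tfrac{d}{2t}$. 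The guiding principle is that any term in which a single derivative lands on $f_t$ vanishes at $y$, so only the Hessian of $f_t$ and the intrinsic operator $\coder\extder$ applied to $\omega$ can contribute.

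Concretely, I would first split $\extder(f_t\omega)=\extder f_t\wedge\omega+f_t\,\extder\omega$ and apply $\coder$ to each piece. For the second piece I would use the product rule $\coder(f\eta)=f\,\coder\eta-\iota_{\nabla f}\eta$ with $\eta=\extder\omega$; since $f_t(y)=1$ and $\nabla f_t(y)=0$, this contributes exactly $\coder\extder\omega(y)=\tilde{\omega}$. For the first piece I would use the codifferential-of-a-wedge identity for two $1$-forms,
\[
\coder(\alpha\wedge\beta)=(\coder\alpha)\beta-(\coder\beta)\alpha-\nabla_{\alpha^\sharp}\beta+\nabla_{\beta^\sharp}\alpha ,
\]
with $\alpha=\extder f_t$ and $\beta=\omega$. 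At $y$ the terms $(\coder\omega)\extder f_t$ and $\nabla_{\nabla f_t}\omega$ drop out because $\extder f_t(y)=0$ and $\nabla f_t(y)=0$, leaving $(\coder\extder f_t)\,\omega+\nabla_{\omega^\sharp}\extder f_t$. Using $\coder\extder f_t=-\Delta f_t=\tfrac{d}{2t}$ at $y$, together with $\nabla_{\omega^\sharp}\extder f_t=\omega^i(\nabla^2 f_t)_{ij}=-\tfrac{1}{2t}\omega$ at $y$, this equals $\tfrac{d-1}{2t}\omega$. Adding the two contributions yields $\coder\extder(f_t\omega)(y)=\tfrac{d-1}{2t}\omega+\tilde{\omega}$.

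An equivalent route, which I would keep as a cross-check, is to write $\coder\extder=-\hodgelplc-\extder\coder$ on $1$-forms and use the Leibniz rule $\hodgelplc(f\omega)=(\hodgelplc f)\omega+2\nabla_{\nabla f}\omega+f\,\hodgelplc\omega$ together with $\coder(f_t\omega)=f_t\coder\omega-\iota_{\nabla f_t}\omega$; evaluating at $y$ and using $-\hodgelplc\omega-\extder\coder\omega=\coder\extder\omega$ reproduces the same value. The main obstacle is essentially bookkeeping: pinning down the correct signs and exact forms of the product rules (codifferential of $f\eta$, codifferential of $\alpha\wedge\beta$, and the Hodge-Laplacian Leibniz rule) in the paper's convention where $\hodgelplc$ is negative semidefinite, and checking the elementary facts $\coder\extder f_t=-\hodgelplc f_t$ on functions and $(\nabla^2 f_t)(y)=-\tfrac{1}{2t}\,\mathrm{Id}$. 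The conceptual point to be careful about is that no curvature terms leak into the leading coefficient: since the product rules are exact identities and normal coordinates are invoked only to evaluate the $f_t$-dependent factors at the single point $y$, every manifold correction is automatically absorbed into the intrinsic term $\tilde{\omega}=\coder\extder\omega$, and the factor $d-1$ arises solely as the Euclidean Hessian trace $d$ (from $\coder\extder f_t$) minus the single Hessian contraction $\nabla_{\omega^\sharp}\extder f_t$.
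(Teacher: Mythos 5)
Your proof is correct, and it reaches the result by a slightly different organization than the paper. The paper proves the lemma by a raw index computation: it writes $(\coder\extder\eta)_i=-\sum_j\nabla^2_{jj}\eta_i+\sum_j\nabla^2_{ji}\eta_j$, applies this to $\eta=f_t\omega$, expands the second covariant derivatives by Leibniz, and evaluates at $y$ where $f_t=1$, $\partial_i f_t=0$, $\partial_i\partial_j f_t=-\delta_{ij}/2t$; the terms $\omega_i\,d/2t$ and $-\omega_i/2t$ appear directly and the rest reassembles into $\coder\extder\omega$. You instead split $\extder(f_t\omega)=\extder f_t\wedge\omega+f_t\,\extder\omega$ and use invariant product rules — $\coder(f\eta)=f\coder\eta-\iota_{\nabla f}\eta$ and the codifferential-of-a-wedge identity for $1$-forms — which I checked are stated with the correct signs in the paper's convention; your two contributions $(\coder\extder f_t)\omega=\tfrac{d}{2t}\omega$ and $\nabla_{\omega^\sharp}\extder f_t=-\tfrac{1}{2t}\omega$ are exactly the paper's two surviving Hessian terms, just identified in coordinate-free form. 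Both arguments rest on the same normal-coordinate facts (vanishing first derivatives of $f_t$ at $y$ and Hessian $-\tfrac{1}{2t}\,\identity$), so the mathematical content is the same; what your version buys is that the bookkeeping is manifestly intrinsic, so it is transparent that no curvature correction can contaminate the coefficient $\tfrac{d-1}{2t}$ (in the paper's computation this is implicit in the fact that the leftover terms are by definition $(\coder\extder\omega)_i$), while the paper's version is shorter and needs only the single index formula for $\coder\extder$ rather than the two product rules. Your cross-check via $\coder\extder=-\hodgelplc-\extder\coder$ is also sound: the Leibniz rule you quote for $\hodgelplc(f\omega)$ holds on $1$-forms because the Weitzenb\"ock curvature term is tensorial, hence $C^\infty$-linear in $\omega$.
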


\begin{proof}
	We do the computation in normal coordinates. At $y$, note that $f=1, \partial_i f=0,\partial_i\partial_j f=-\delta_{ij}/2t$, (we omit the lower index $t$ of $f$)
	\begin{align*}
	\coder\extder  (f\omega)_i=&-\sum_j\nabla^2_{jj} (f\omega_i)+\sum_j\nabla^2_{ji} (f\omega_j)
	\\=&-\sum_j\left ( (\partial_{jj}^2f)\omega_i+f\nabla^2_{jj}\omega_i\right) + \sum_j\left (  (\partial^2_{ji}f)\omega+f\nabla^2_{ji}\omega_j)\right)
	\\=&\omega_i\frac{d}{2t}-\sum_j\nabla^2_{jj} \omega_i -\frac{1}{2t}\omega +\sum_j \nabla^2_{ji}\omega_j
	\\=&\omega_i\frac{d-1}{2t}+\tilde{\omega}_i.
	\end{align*}
 \end{proof}

Then we get the asymptotic expansion of the kernel of $\leray e^{t\Delta}$, which is essential in the construction of the proper scaling in Theorem \ref{thm scaling via mollified space time white noise}. 
\begin{thm}\label{thm: diagonal leray heat kernel converge}
Let $\tilde{p}_t(x,y)$ be the integral kernel of the operator $\leray e^{t\Delta}$ on 1-forms. Let $a_t(x)=\tilde{p}_t(x,x)$. 
Then for any positive integer $l$, we can find a positive constant $C=C(\manifold,l)$, such that
\begin{equation}
    \left\|a_t -  (4\pi t)^{-d/2}\frac{d-1}{d}\identity \right\|_{C^l}\leq C( t^{-d/2+1}+1).
\end{equation}
\end{thm}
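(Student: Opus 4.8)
The plan is to reduce the diagonal kernel of $\leray e^{t\hodgelplc}$ to a time-integral of the heat semigroup, to which the small-time asymptotic expansion of Section \ref{subsec heat kernel expansion} applies, and then to read off the leading coefficient using Lemma \ref{lma-dd}. First I would invoke the Hodge-theoretic formula $\leray = \coder\extder\greenop + \harmonicpr$. Since harmonic forms lie in $\kernel\hodgelplc$, the piece $\harmonicpr e^{t\hodgelplc} = \harmonicpr$ is a fixed finite-rank operator with smooth kernel and contributes only $O(1)$ to every $\cknorm{\cdot}{l}$. The essential term is $\coder\extder\greenop e^{t\hodgelplc}$, and here I would use the spectral identity $\greenop e^{t\hodgelplc} = \int_t^\infty (e^{s\hodgelplc} - \harmonicpr)\,\der s$ (immediate from $\greenop=(-\hodgelplc)^{-1}$ on the orthocomplement of the harmonic forms, the harmonic modes cancelling), together with $\coder\extder\harmonicpr = 0$, to obtain
\[
\coder\extder\greenop e^{t\hodgelplc} = \int_t^\infty \coder\extder e^{s\hodgelplc}\,\der s .
\]

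Next I would split $\int_t^\infty = \int_t^1 + \int_1^\infty$. On $[1,\infty)$ the operator $\coder\extder e^{s\hodgelplc}$ maps $L^2$ into $C^l$ with norm decaying like $e^{-cs}$ (spectral gap above the harmonic forms, plus the smoothing of the semigroup), so $\int_1^\infty \coder\extder e^{s\hodgelplc}\,\der s$ has a smooth kernel bounded in $\cknorm{\cdot}{l}$, i.e. $O(1)$. For the singular part $\int_t^1$ I would evaluate the kernel on the diagonal after substituting the expansion $p_s = k_s^N + r_s^N$ with $N$ chosen large. By \eqref{eq: error of heat kernel expansion} the remainder obeys $\cknorm{r_s^N}{l+2} = O(s^{N-d/2-(l+2)/2})$; applying the second-order operator $(\coder\extder)_x$ and restricting to the diagonal costs at most $l+2$ derivatives, so for $N$ large the contribution of $r_s^N$ to $\int_t^1$ is $O(1)$.

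The heart of the argument is the contribution of $k_s^N = q_s\sum_{k<N} s^k\Phi_k$. For a fixed base point $y$ I would work in normal coordinates centred at $y$, where $q_s(\cdot,y)=(4\pi s)^{-d/2}e^{-|x|^2/4s}$, and apply $(\coder\extder)_x$ to $q_s(\cdot,y)\Phi_k(\cdot,y)$, viewing $\Phi_k(\cdot,y)$ as a $1$-form in $x$ (with values in $T_y^*\manifold$). Lemma \ref{lma-dd} then gives, at $x=y$,
\[
(\coder\extder)_x\big(q_s(\cdot,y)\Phi_k(\cdot,y)\big)\big|_{x=y} = (4\pi s)^{-d/2}\Big(\tfrac{d-1}{2s}\Phi_k(y,y) + (\coder\extder)_x\Phi_k(\cdot,y)\big|_{x=y}\Big).
\]
Only the $k=0$ term carries a genuinely singular $1/s$ factor at leading order; using $\Phi_0(y,y)=\identity$ and integrating,
\[
\int_t^1 (4\pi s)^{-d/2}\tfrac{d-1}{2s}\,\der s\,\identity = (4\pi t)^{-d/2}\tfrac{d-1}{d}\identity + O(1),
\]
which is exactly the asserted principal part (the subtracted identity section is parallel, so its $y$-derivatives vanish and it is clean in $\cknorm{\cdot}{l}$). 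Every remaining piece — the $k=0$ lower-order term $(\coder\extder)_x\Phi_0$ and all $k\geq1$ terms — has integrand of size $O(s^{-d/2})$ or $O(s^{-d/2+k})$, whose integral over $[t,1]$ is $O(t^{-d/2+1}+1)$. Collecting the three contributions gives the stated bound.

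The main obstacle I expect is the uniformity in the base point needed for the $\cknorm{\cdot}{l}$ estimate: Lemma \ref{lma-dd} identifies only the value at the centre $y$ of the normal chart, and since that chart varies with $y$ one cannot simply differentiate its conclusion in $y$. I would circumvent this by treating $[(\coder\extder)_x p_s(x,y)]_{x=y}$ as a genuine function of $y$ assembled from mixed $x,y$-derivatives of the kernel on the diagonal, and controlling these through the $C^{l+2}(\manifold\times\manifold)$ form of the expansion and its error \eqref{eq: error of heat kernel expansion}; the lemma then serves only to pin down the leading constant $\tfrac{d-1}{d}$, while the $y$-regularity is supplied by the smoothness and uniform bounds on the $\Phi_k$. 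A secondary technical point is justifying the interchange of $\coder\extder$ with the $\der s$-integral and the convergence of the tail, both of which follow from the spectral representation.
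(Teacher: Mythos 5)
Your proposal is correct and follows essentially the same route as the paper: both reduce the Leray-projected heat kernel to the operator $\coder\extder e^{s\hodgelplc}$, apply the heat-kernel expansion together with Lemma \ref{lma-dd} to identify the diagonal singularity $(4\pi s)^{-d/2}\tfrac{d-1}{2s}\identity$, and integrate in time to produce the constant $\tfrac{d-1}{d}$ and the error $O(t^{-d/2+1}+1)$. The only real difference is bookkeeping: the paper uses $\partial_t \leray e^{t\hodgelplc}=-\coder\extder e^{t\hodgelplc}$ and integrates from $t$ to $1$ against the smooth kernel $a_1$, whereas you use the equivalent spectral identity $\leray e^{t\hodgelplc}=\harmonicpr+\int_t^\infty \coder\extder e^{s\hodgelplc}\,\der s$ and control the tail $\int_1^\infty$ by the spectral gap, which amounts to the same estimate.
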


\begin{proof}
For $t\geq 1$, the heat kernel $p_t$ and its derivatives are uniformly bounded in space and time. Therefore we only need to show when $t\in(0,1)$,
\begin{equation}
    \left\|a_t -  (4\pi t)^{-d/2}\frac{d-1}{d}\identity \right\|_{C^l}\leq C t^{-d/2+1}
\end{equation}
Note that $\partial_t \leray e^{t\Delta}=\leray\Delta e^{t\Delta}=-\coder\extder e^{t\Delta}$.

As $a_1$ is smooth, it suffices to show 
\begin{equation}\label{leray heat kernel need to prove}
    \cknorm{\partial_t \left(a_t-(4\pi t)^{-d/2}\frac{d-1}{2t}\identity\right)}{l}\lesssim t^{-d/2},
\end{equation}
for all $t\in(0,1)$.

    By (\ref{eq: error of heat kernel expansion}), for $N>d/2$
    we get 
    \begin{equation}
        \left\|\partial_t \tilde{p}-\sum_{k=0}^N\coder \extder(q_t\Phi_k)\right\|_{C^l}=\|\coder \extder (p_t-k^N_t)\|_{C^{l}}=O(t^{N-d/2-l/2-1}). 
    \end{equation}
    
We choose $N$ so that $N>l/2+1$.
    By Lemma \ref{lma-dd},
    $\coder \extder (q_t\Phi_k)(x,x)=(4\pi t)^{-d/2}((2t)^{-1}\Phi_k+\coder\extder\Phi_k)(x, x)$. 
    Let $\phi_k(x)=\Phi_k(x,x)$ and $\psi_k(x)=\coder\extder\Phi_k(x,x)$, then
    \begin{equation}
        \left\|\partial_t a_t+(4\pi t)^{-d/2}\sum_k((2t)^{-1}\phi_k+\psi_k) \right\|_{C^l}\leq C t^{N-d/2-l/2-2},
    \end{equation}    
    for some constant $C=C(\manifold,l,N)$.
    Note that $\phi_0(x)=\Phi_0(x,x)=\identity$, $\phi_k,\psi_k$ are smooth functions, we get
    \begin{align*}
        &\cknorm{\partial_t(a_t-(4\pi t)^{-d/2}\frac{d-1}{d}\identity)}{l}
        \\&\leq  \sum_{k=1}^{N-1}t^{-d/2+k-1}\cknorm{\phi_k}{l}+\sum_{k=0}^{N-1}t^{-d/2-k}\cknorm{\psi_k}{l}+ Ct^{N-d/2-l/2-1}
        \\& \lesssim_{\manifold,l} t^{-d/2},
    \end{align*}
    which is the bound (\ref{leray heat kernel need to prove}) we need.
\end{proof}



\begin{prop}\label{prop: finite trace}
    The trace of the operator $(1-\Delta)^{\alpha}$ on $L^2(\manifold)$ is finite for $\alpha<-d/2$. 
\end{prop}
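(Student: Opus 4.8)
The plan is to diagonalize $1-\Delta$ and reduce the trace to a time integral of the heat trace, which the on-diagonal heat kernel expansion of Section~\ref{subsec heat kernel expansion} then controls. Since $-\Delta$ is a nonnegative self-adjoint operator on $L^2(\manifold)$ with discrete spectrum $0=\lambda_0\leq\lambda_1\leq\cdots\to\infty$, the functional calculus gives that $(1-\Delta)^\alpha$ has eigenvalues $(1+\lambda_j)^\alpha$, all positive for $\alpha<0$, so that $\tr(1-\Delta)^\alpha=\sum_j(1+\lambda_j)^\alpha$. First I would rewrite each eigenvalue through the Gamma integral $(1+\lambda_j)^\alpha=\frac{1}{\Gamma(-\alpha)}\int_0^\infty t^{-\alpha-1}e^{-t(1+\lambda_j)}\,\der t$, valid for $\alpha<0$. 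Summing over $j$ and interchanging sum and integral (legitimate by Tonelli, as every term is nonnegative) yields
\[
\tr(1-\Delta)^\alpha=\frac{1}{\Gamma(-\alpha)}\int_0^\infty t^{-\alpha-1}e^{-t}\,\tr\!\left(e^{t\Delta}\right)\der t,
\qquad
\tr\!\left(e^{t\Delta}\right)=\int_\manifold p_t(x,x)\,\der x,
\]
where $p_t$ is now the scalar heat kernel of the Laplace--Beltrami operator, i.e.\ the Hodge Laplacian on $0$-forms.

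The next step is to bound the heat trace. By the on-diagonal expansion $p_t(x,x)\sim(4\pi t)^{-d/2}\sum_{k}t^k\Phi_k(x,x)$ (the scalar analogue of the expansion in Section~\ref{subsec heat kernel expansion}, covered by the generalized Laplacians treated in \cite{berline1992heat}) with $\Phi_0(x,x)=\identity$, one has $p_t(x,x)=(4\pi t)^{-d/2}(1+O(t))$ uniformly in $x$ on the compact manifold. Integrating over $\manifold$ gives $\tr(e^{t\Delta})\lesssim t^{-d/2}$ for $t\in(0,1]$, while for $t\geq 1$ the heat trace $\sum_j e^{-t\lambda_j}$ is non-increasing and hence bounded by its (finite) value at $t=1$. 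Splitting the time integral at $t=1$, the contribution of $[1,\infty)$ converges thanks to the $e^{-t}$ factor, and the contribution of $(0,1]$ is controlled by
\[
\int_0^1 t^{-\alpha-1}\,t^{-d/2}\,\der t,
\]
which is finite exactly when $-\alpha-1-d/2>-1$, i.e.\ when $\alpha<-d/2$. This matches the stated range and completes the argument.

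I do not expect a serious obstacle: the content is entirely the matching of the short-time blow-up $t^{-d/2}$ of the heat trace with the integrability threshold, and the only points requiring care are the justification of the sum--integral interchange (immediate from positivity) and the use of the scalar rather than the $1$-form heat kernel expansion. An equivalent route, avoiding the subordination formula, would be to extract from $\tr(e^{t\Delta})\lesssim t^{-d/2}$ the Weyl-type lower bound $\lambda_j\gtrsim j^{2/d}$ (via $N(\Lambda)e^{-1}\leq\tr e^{t\Delta}\lesssim\Lambda^{d/2}$ with $t=1/\Lambda$, where $N(\Lambda)=\#\{j:\lambda_j\leq\Lambda\}$) and then sum $\sum_j(1+\lambda_j)^\alpha\lesssim\sum_j j^{2\alpha/d}$, which converges under the same condition $2\alpha/d<-1$.
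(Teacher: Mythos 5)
Your proposal is correct and follows essentially the same route as the paper: both rest on the subordination formula $(1-\Delta)^\alpha=\frac{1}{\Gamma(-\alpha)}\int_0^\infty t^{-\alpha-1}e^{-t}e^{t\Delta}\,\der t$ together with the on-diagonal heat trace bound $\tr(e^{t\Delta})\lesssim t^{-d/2}+1$, and conclude by the integrability of $t^{-\alpha-1-d/2}e^{-t}$ near $t=0$ precisely when $\alpha<-d/2$. Your additional details (the eigenvalue-wise Gamma integral with the Tonelli interchange, and the monotonicity of the heat trace for $t\geq 1$) merely make explicit steps the paper leaves implicit, so there is nothing to correct.
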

\begin{proof}
    By standard heat kernel estimates results from \cite{berline1992heat},  $p_t(x,x)\lesssim_{\manifold} t^{-d/2}+1$. 
    $\tr(e^{t\Delta})\lesssim t^{-d/2}+1$ as $\tr(e^{t\Delta})=\int_\manifold p_t(x,x)\der x$.
    Note that 
    \[
    (1-\Delta)^\alpha=\frac
    {1}{\Gamma(-\alpha)}\int_0^\infty t^{-\alpha-1}e^{-t}e^{t\Delta}\der t,
    \]
    we have \[
    \tr((1-\Delta)^\alpha)\lesssim\int_0^\infty t^{-\alpha-1-d/2}e^{-t} \der t+\int_0^\infty t^{-\alpha-1}e^{-t} \der t<\infty,
    \]
    when $\alpha<-d/2$.
\end{proof}

Now we prove some technical lemmas derived from heat kernel estimates for divergence form second order elliptic operators $L=\nabla\cdot A\nabla$, where $A$ is a smooth positive-definite 2-tensor field. By \cite{taylor2011pseudodifferential}, the eigenfunctions of $L$ are smooth and they form an orthonormal basis of $L^2(\manifold)$.
\newcommand{\kernelqAalpha}[2]{q_{#2}^{A,#1}}
 For any $\alpha\in\mathbb{R}$, we define $\kernelqAalpha{\alpha}{t}$ to be the integral kernel of $(1-L)^\alpha e^{tL}$.
\begin{lma}\label{lma kernel expansion via eigenfunction}
    Let $\phi_j$ be an eigenfunction of $L$ with the corresponding eigenvalue $-\lambda_j$ and $\{\phi_j\}$ be an orthonormal system of $L^2$. Then the kernel $\kernelqAalpha{\alpha}{t}$ is given by
    \begin{equation}\label{eq kernel via eigenfuntions}
        \kernelqAalpha{\alpha}{t}(x,y)=\sum_j (1+\lambda_j)^\alpha e^{-t\lambda_j}\phi_j(x)\phi_j(y),
    \end{equation}
    where the sum on the RHS converges in $H^\beta(\manifold)\otimes H^\beta(\manifold)$, for any $\beta\in\mathbb{{R}}$.
\end{lma}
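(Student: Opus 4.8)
The plan is to read off the kernel formula directly from the functional calculus of $L$ and then to prove the claimed convergence by transporting the $H^\beta$ geometry onto the spectral decomposition of $L$. First I would record that, since $L\phi_j=-\lambda_j\phi_j$, each $\phi_j$ is an eigenfunction of $(1-L)^\alpha e^{tL}$ with eigenvalue $(1+\lambda_j)^\alpha e^{-t\lambda_j}$; hence, by the functional calculus recalled in Section 2, for every $\psi\in L^2(\manifold)$,
\[
(1-L)^\alpha e^{tL}\psi = \sum_j (1+\lambda_j)^\alpha e^{-t\lambda_j}(\psi,\phi_j)\,\phi_j .
\]
Integrating the right-hand side of (\ref{eq kernel via eigenfuntions}) against $\psi(y)\,\der y$ reproduces exactly this expression, so once the series is shown to define a genuine function on $\manifold\times\manifold$ (in fact a smooth one), it must coincide with the integral kernel $\kernelqAalpha{\alpha}{t}$. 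Thus the content of the lemma is entirely the convergence statement.

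For the convergence I would work with the equivalent norm $\anorm{\cdot}{\beta}$ built from $L$ rather than from $\Delta$; the norm equivalence established in Section 2 then transfers the conclusion to $H^\beta(\manifold)$. Using the $L$-functional calculus, the eigenfunctions satisfy $\anorm{\phi_j}{\beta}=(1+\lambda_j)^{\beta/2}$ and are mutually orthogonal in $H_A^\beta$, their $H_A^\beta$-inner product being $(1+\lambda_j)^\beta\delta_{jk}$. Consequently the elementary tensors $\phi_j\otimes\phi_j$ are orthogonal in $H_A^\beta\otimes H_A^\beta$ with $\|\phi_j\otimes\phi_j\|^2=(1+\lambda_j)^{2\beta}$, so that the squared norm of a tail of the series reduces to the scalar sum
\[
\sum_{j\geq M}(1+\lambda_j)^{2\alpha+2\beta}e^{-2t\lambda_j}.
\]

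It then remains to show this series is finite for every $t>0$ and every real exponent. The key observation is that for fixed $t>0$ the factor $e^{-t\lambda_j}$ decays faster than any power of $\lambda_j$, so $\sup_j (1+\lambda_j)^{2\alpha+2\beta}e^{-t\lambda_j}<\infty$, while the remaining factor contributes $\sum_j e^{-t\lambda_j}=\tr(e^{tL})$. The latter trace is finite by the same heat-kernel bound $p_t(x,x)\lesssim t^{-d/2}+1$ used in the proof of Proposition \ref{prop: finite trace}, which holds for the divergence-form operator $L$ as well. Hence the full series ($M=0$) has finite $H_A^\beta\otimes H_A^\beta$ norm and its tails vanish, giving convergence in $H_A^\beta\otimes H_A^\beta$ and therefore in $H^\beta(\manifold)\otimes H^\beta(\manifold)$. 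Taking $\beta$ large and invoking the Sobolev embedding on $\manifold\times\manifold$ upgrades the limit to a continuous, indeed smooth, function, which legitimizes the kernel identification made in the first step.

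The step that needs the most care is the bookkeeping for the tensor-product Sobolev space: one must justify that the Hilbert norm on $H_A^\beta\otimes H_A^\beta$ is genuinely computed from the $H_A^\beta$-orthogonality of the $\phi_j$, and that for \emph{negative} $\beta$ the identity $\anorm{\phi_j}{\beta}=(1+\lambda_j)^{\beta/2}$ together with the norm equivalence continue to hold through the duality set up in Section 2. Everything else is a direct consequence of the spectral decomposition of $L$ and the finiteness of $\tr(e^{tL})$.
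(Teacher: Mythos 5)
Your proof is correct, but it takes a genuinely different route from the paper's, essentially reversing the logical order. The paper starts from the fact (quoted from the heat-kernel references) that the kernel of $e^{tL}$, hence of $(1-L)^\alpha e^{tL}$, is smooth and in particular lies in $L^2(\manifold\times\manifold)$; it then computes the Fourier coefficients $(q^{A,\alpha}_t,\phi_i\boxtimes\phi_j)=(\phi_i,(1-L)^\alpha e^{tL}\phi_j)=(1+\lambda_j)^\alpha e^{-t\lambda_j}\delta_{ij}$ against the orthonormal system $\{\phi_i\boxtimes\phi_j\}$ of $L^2(\manifold)\otimes L^2(\manifold)$, which identifies the expansion and gives $L^2\otimes L^2$ convergence by Parseval; finally, for the $H^\beta$ statement it applies the same computation to the \emph{shifted} kernel $q^{A,\alpha+\beta}_t$ and pushes its $L^2$-convergent expansion through the bounded operator $(1-L)^{-\beta/2}$ acting in each variable, which sends $\phi_j\mapsto(1+\lambda_j)^{-\beta/2}\phi_j$ and so converts that series termwise into the desired one, now convergent in $H^\beta\otimes H^\beta$. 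You instead prove the convergence directly: orthogonality of the $\phi_j\otimes\phi_j$ in $H_A^\beta\otimes H_A^\beta$ reduces the tails to the scalar series $\sum_j(1+\lambda_j)^{2\alpha+2\beta}e^{-2t\lambda_j}$, which you dominate by $\sup_j\bigl[(1+\lambda_j)^{2\alpha+2\beta}e^{-t\lambda_j}\bigr]\cdot\tr(e^{tL})$, and you recover the kernel identification a posteriori via the functional calculus together with Sobolev embedding for large $\beta$. These are two proofs of the same quantitative fact: your scalar bound is exactly the square-summability that the paper obtains indirectly (smooth kernel $\Rightarrow$ $L^2$ kernel $\Rightarrow$ Parseval). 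What each buys: your argument needs less input (only the on-diagonal bound giving $\tr(e^{tL})<\infty$, as in Proposition \ref{prop: finite trace}) and yields smoothness of the kernel as a byproduct, whereas the paper's argument is shorter because it takes that smoothness as known from the references and then gets the identification for free. The two caveats you flag at the end are indeed harmless: $(1-L)^{\beta/2}\phi_j=(1+\lambda_j)^{\beta/2}\phi_j$ holds for every real $\beta$ by the functional calculus recalled in Section 2, and the equivalence $\anorm{\cdot}{\beta}\sim\hnormm{\cdot}{\beta}$ passes to the Hilbert tensor norms because the tensor square of a bounded identity map is bounded with the square of its norm.
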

\begin{proof}
    By \cite{berline1992heat} or \cite{taylor2011pseudodifferential}, the kernel of $e^{tL}$ is smooth. Therefore $\kernelqAalpha{\alpha}{t}$ is smooth and hence is in $L^2$. $\{\phi_i\boxtimes\phi_j\}$ form an ONS of $L^2(\manifold)\otimes L^2(\manifold)$ and $(\kernelqAalpha{\alpha}{t},\phi_i\boxtimes\phi_j)=(\phi_i,(1-L)^\alpha e^{tL}\phi_j)=(1+\lambda_j)^\alpha e^{-t\lambda_j}\delta_{ij}$, therefore $\kernelqAalpha{\alpha}{t}(x,y)=\sum_j (1+\lambda_j)^\alpha e^{-t\lambda_j}\phi_j(x)\phi_j(y)$ and the convergence is in $L^2(\manifold)\otimes L^2(\manifold)$. The same result also holds for $\kernelqAalpha{\alpha+\beta}{t}(x,y)=\sum_j (1+\lambda_j)^{\alpha+\beta} e^{-t\lambda_j}\phi_j(x)\phi_j(y)$ and the convergence is in $L^2(\manifold)\otimes L^2(\manifold)$. Note that $(1+\lambda_j)^{-\beta/2}\phi_j=(1-L)^{-\beta/2}\phi_j$ and $(1-L)^{-\beta/2}$ is in $L(L^2(\manifold),H^\beta(\manifold))$, the RHS of (\ref{eq kernel via eigenfuntions}) also converges in $H^\beta(\manifold)\otimes H^\beta(\manifold)$.
\end{proof}
\begin{cor}
\label{cor qaalpha expansion}
    Let $t>0$, $u\in H^{\beta}(\manifold)$. Suppose $Q\in L(L^2(\manifold,\tanbdl))$ is positive semidefinite and its integral kernel (by an abuse of notation also denoted by $Q$) is smooth. Then \[
    \tr_{H^\alpha_A} (e^{tL/2}\nabla^\ast\multiplyfunc{u}Q\multiplyfunc{u}\nabla e^{tL/2})=\int_{\manifold\times\manifold}(\nabla_x\nabla_y\kernelqAalpha{\alpha}{t})(x,y)\cdot Q(x,y) u(x) u(y)\der x \der y,
    \]
    where $\multiplyfunc{u}$ denotes the operator of multiplying $u$. When $\beta<0$, the integral on the RHS should be understood as a duality  pairing of $H^{-\beta}(\manifold)\otimes H^{-\beta}(\manifold), H^\beta(\manifold)\otimes H^\beta(\manifold)$.
\end{cor}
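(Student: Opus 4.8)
The plan is to diagonalize everything in the orthonormal eigenbasis $\{\phi_j\}$ of $L$ (with $-L\phi_j=\lambda_j\phi_j$) provided by \cite{taylor2011pseudodifferential}, and to show that both sides collapse to the single absolutely convergent series
$$ S:=\sum_j (1+\lambda_j)^{\alpha}e^{-t\lambda_j}\,(\multiplyfunc{u}\nabla\phi_j,\,Q\multiplyfunc{u}\nabla\phi_j)_{L^2(\manifold,\tanbdl)}. $$
For $\beta\geq 0$ each factor is a genuine $L^2$ pairing; for $\beta<0$ it is the duality pairing $\langle \multiplyfunc{u}\nabla\phi_j,\,Q\multiplyfunc{u}\nabla\phi_j\rangle_{H^\beta,H^{-\beta}}$, which is well defined because $\phi_j$ is smooth and $Q$ has a smooth kernel.

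For the left-hand side I would write $T=e^{tL/2}\nabla^{\ast}\multiplyfunc{u}Q\multiplyfunc{u}\nabla e^{tL/2}$ and use the definition of the $H^\alpha_A$-trace of the nonnegative operator $T$, namely $\tr_{H^\alpha_A}T=\tr_{L^2}\big((1-L)^{\alpha/2}T(1-L)^{\alpha/2}\big)=\sum_j\big(\phi_j,(1-L)^{\alpha/2}T(1-L)^{\alpha/2}\phi_j\big)$. Functional calculus gives $(1-L)^{\alpha/2}\phi_j=(1+\lambda_j)^{\alpha/2}\phi_j$ and $e^{tL/2}\phi_j=e^{-t\lambda_j/2}\phi_j$. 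Pushing the self-adjoint factors $(1-L)^{\alpha/2}e^{tL/2}$ symmetrically onto each of the two copies of $\phi_j$ extracts the scalar $(1+\lambda_j)^{\alpha}e^{-t\lambda_j}$ and leaves $(\phi_j,\nabla^{\ast}\multiplyfunc{u}Q\multiplyfunc{u}\nabla\phi_j)$; since $\nabla^{\ast}$ is the $L^2$-adjoint of $\nabla$ and $\multiplyfunc{u}$ is self-adjoint, this last pairing equals $(\multiplyfunc{u}\nabla\phi_j,Q\multiplyfunc{u}\nabla\phi_j)$. Summing over $j$ identifies $\tr_{H^\alpha_A}T$ with $S$.

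For the right-hand side I would insert the expansion of Lemma \ref{lma kernel expansion via eigenfunction}, $\kernelqAalpha{\alpha}{t}(x,y)=\sum_j(1+\lambda_j)^{\alpha}e^{-t\lambda_j}\phi_j(x)\phi_j(y)$, which converges in $H^{\gamma}(\manifold)\otimes H^{\gamma}(\manifold)$ for every $\gamma$. Differentiating term by term gives $\nabla_x\nabla_y\kernelqAalpha{\alpha}{t}=\sum_j(1+\lambda_j)^{\alpha}e^{-t\lambda_j}\,\nabla\phi_j\boxtimes\nabla\phi_j$ with convergence in $H^{\gamma-1}\otimes H^{\gamma-1}$. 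Contracting the $j$-th summand with the smooth kernel $Q(x,y)$ and integrating (or pairing) against $u(x)u(y)$ reproduces precisely $(1+\lambda_j)^{\alpha}e^{-t\lambda_j}(\multiplyfunc{u}\nabla\phi_j,Q\multiplyfunc{u}\nabla\phi_j)$, so the right-hand side is again $S$. In the case $\beta<0$ I would fix $\gamma\geq 1-\beta$ so that the differentiated series converges in $H^{-\beta}\otimes H^{-\beta}$, which legitimizes pairing it termwise against $u\boxtimes u\in H^{\beta}\otimes H^{\beta}$.

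The only real obstacle is justifying the two interchanges of the infinite sum with, respectively, the $L^2$-trace and the duality pairing, together with the finiteness of $S$. For this I would exploit that $Q$ has a smooth kernel, hence is bounded from $H^{\beta}(\manifold,\tanbdl)$ into $H^{-\beta}(\manifold,\tanbdl)$, giving $|(\multiplyfunc{u}\nabla\phi_j,Q\multiplyfunc{u}\nabla\phi_j)|\lesssim \hnormm{u}{\beta}^2\,\cknorm{\nabla\phi_j}{m}^2$ for a fixed $m$ depending on $\beta$ and $d$. Elliptic regularity and Sobolev embedding bound $\cknorm{\nabla\phi_j}{m}$ by a fixed power of $(1+\lambda_j)$, while for $t>0$ the factor $e^{-t\lambda_j}$ decays faster than any power of $\lambda_j$ by Weyl's law. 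Hence $\sum_j(1+\lambda_j)^{\alpha}e^{-t\lambda_j}\cknorm{\nabla\phi_j}{m}^2<\infty$, which simultaneously shows that $T$ is trace class on $H^\alpha_A$ and, by dominated convergence, validates every rearrangement above, completing the identification of the two sides.
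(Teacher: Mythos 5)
Your proposal is correct and follows essentially the same route as the paper: expand the $H^\alpha_A$-trace in the eigenbasis $\{\phi_j\}$ of $L$, use functional calculus to pull out the factors $(1+\lambda_j)^{\alpha}e^{-t\lambda_j}$, and identify the resulting series with the right-hand side via the kernel expansion of Lemma \ref{lma kernel expansion via eigenfunction}. The paper's proof is just a terser version of yours, omitting the explicit convergence/interchange justifications that you supply via the smoothing property of $Q$, eigenfunction bounds, and Weyl's law.
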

\begin{proof}
    Let $\phi_j,\lambda_j$ be the same as in  Lemma \ref{lma kernel expansion via eigenfunction}. Then
    \begin{align*}
        &\tr_{H^\alpha_A} (e^{tL/2}\nabla^\ast\multiplyfunc{u}Q\multiplyfunc{u}\nabla e^{tL/2})
     \\   =&\sum_j (1+\lambda_j)^\alpha(\phi_j, e^{tL/2}\nabla^\ast\multiplyfunc{u}Q\multiplyfunc{u}\nabla e^{tL/2}\phi_j)
     \\=& \sum_j e^{-t\lambda_j}(1+\lambda_j)^\alpha(\nabla\phi_j,\multiplyfunc{u}Q\multiplyfunc{u}\nabla\phi_j)
     \\=&\sum_j  e^{-t\lambda_j}(1+\lambda_j)^\alpha \int_{\manifold\times\manifold}\nabla\phi_j(x)\cdot Q(x,y)\nabla\phi_j(y) u(x) u(y) \der x \der y
     \\=&\int_{\manifold\times\manifold}(\nabla_x\nabla_y\kernelqAalpha{\alpha}{t})(x,y)\cdot Q(x,y) u(x) u(y)\der x \der y,
    \end{align*}
    where in the last step we use Lemma \ref{lma kernel expansion via eigenfunction}.
\end{proof}
\begin{lma} \label{lma:heat kernel estimate C2}
    Let $T>0$, and $\alpha\in (-d/2-1, -d/2)$. Let $q^{A,\alpha}_t$ be the kernel of $(1-L)^\alpha e^{tL}$, then for all $s,t$ satisfying $0<s\leq t\leq T$, we can find a positive constant $C=C(A,\manifold,\alpha, T)$, such that 
    \begin{equation}
        \| q^{A,\alpha}_t \|_{C^2(\manifold\times \manifold)} \leq C t^{-d/2-\alpha-1} .
    \end{equation} 
    \begin{equation}
        \| \kernelqAalpha{\alpha}{t} -\kernelqAalpha{\alpha}{s}\|_{C^2(\manifold\times \manifold)} \leq C (s^{-d/2-\alpha-1}-t^{-d/2-\alpha-1}) 
    \end{equation}
    Moreover, the constant $C$ can be chosen uniformly in $A$, provided that $A$ and $A^{-1}$ are uniformly bounded in the $C^k$ norm with some sufficiently large $k$ which may depend on $d$.
\end{lma}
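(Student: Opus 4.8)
The plan is to reduce both estimates to small-time $C^2$ bounds on the scalar heat kernel of $e^{rL}$ and of its time derivative, and to feed these into the subordination (Bochner) representation of the negative power $(1-L)^\alpha$. Since $-L$ has nonnegative spectrum, $(1-L)$ has spectrum in $[1,\infty)$, and for $\alpha<0$ we may write, exactly as in the proof of Proposition \ref{prop: finite trace},
\[
(1-L)^\alpha e^{tL}=\frac{1}{\Gamma(-\alpha)}\int_0^\infty \tau^{-\alpha-1}e^{-\tau}e^{(t+\tau)L}\,\der\tau.
\]
Reading off integral kernels gives $\kernelqAalpha{\alpha}{t}(x,y)=\frac{1}{\Gamma(-\alpha)}\int_0^\infty \tau^{-\alpha-1}e^{-\tau}p^L_{t+\tau}(x,y)\,\der\tau$, where $p^L_r$ denotes the heat kernel of $e^{rL}$. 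Thus the whole lemma is driven by the behaviour of $p^L_r$ in $C^2(\manifold\times\manifold)$, which is nontrivial only for $r\in(0,1]$; for $r\geq 1$ the kernel and its derivatives are uniformly bounded and all claims become immediate.

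For the two heat-kernel inputs I would invoke the asymptotic expansion of Section \ref{subsec heat kernel expansion}. Although it is stated there for the Hodge Laplacian, the operator $L=\nabla\cdot A\nabla$ is a generalized Laplacian in the sense of \cite{berline1992heat} for the auxiliary metric $\tilde g:=A^{-1}$ (its principal symbol is $A^{ij}\xi_i\xi_j$; the first- and zeroth-order discrepancy from $\Delta_{\tilde g}$ is absorbed into the lower-order part), so the same machinery applies verbatim with $q_t$ built from the $\tilde g$-distance. From the expansion and the error estimate \eqref{eq: error of heat kernel expansion} (with $l=2$, and with $l=2,k=1$ for the time derivative) one obtains $\cknorm{p^L_r}{2}\lesssim r^{-d/2-1}$ and $\cknorm{\partial_r p^L_r}{2}\lesssim r^{-d/2-2}$ for $r\in(0,1]$. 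Crucially, the parametrix construction involves only finitely many derivatives of the coefficients of $L$, so all implied constants depend only on $\manifold$, $\alpha$, $T$ and on $\|A\|_{C^k},\|A^{-1}\|_{C^k}$ for some $k=k(d)$; this is exactly what yields the asserted uniformity in $A$.

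The first bound then follows by inserting $\cknorm{p^L_{t+\tau}}{2}\lesssim (t+\tau)^{-d/2-1}$ into the subordination integral and splitting at $\tau=t$. On $\{\tau<t\}$ one uses $(t+\tau)^{-d/2-1}\approx t^{-d/2-1}$ together with $\int_0^t\tau^{-\alpha-1}\der\tau=t^{-\alpha}/(-\alpha)$; on $\{\tau>t\}$ one uses $(t+\tau)^{-d/2-1}\leq\tau^{-d/2-1}$ and integrates $\tau^{-\alpha-d/2-2}$, whose exponent lies in $(-2,-1)$ precisely because $\alpha\in(-d/2-1,-d/2)$, again producing a contribution of order $t^{-d/2-\alpha-1}$. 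This proves $\cknorm{\kernelqAalpha{\alpha}{t}}{2}\lesssim t^{-d/2-\alpha-1}$. For the difference I would write $\kernelqAalpha{\alpha}{t}-\kernelqAalpha{\alpha}{s}=\int_s^t\partial_r\kernelqAalpha{\alpha}{r}\,\der r$, where $\partial_r\kernelqAalpha{\alpha}{r}$ is the kernel of $(1-L)^\alpha L e^{rL}$. Running the same subordination-plus-splitting argument with $\cknorm{\partial_r p^L_r}{2}\lesssim r^{-d/2-2}$ yields $\cknorm{\partial_r\kernelqAalpha{\alpha}{r}}{2}\lesssim r^{-d/2-\alpha-2}=r^{e-1}$, where $e:=-d/2-\alpha-1\in(-1,0)$. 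Integrating in $r$ gives $\cknorm{\kernelqAalpha{\alpha}{t}-\kernelqAalpha{\alpha}{s}}{2}\lesssim\int_s^t r^{e-1}\der r=\tfrac{1}{-e}(s^e-t^e)$, which is precisely the claimed $C\,(s^{-d/2-\alpha-1}-t^{-d/2-\alpha-1})$, nonnegative because $e<0$ and $s\leq t$.

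The main obstacle is not the integral bookkeeping, which is routine once the exponents are tracked, but the two heat-kernel inputs with \emph{uniform} constants: establishing $\cknorm{p^L_r}{2}\lesssim r^{-d/2-1}$ and $\cknorm{\partial_r p^L_r}{2}\lesssim r^{-d/2-2}$ for the divergence-form operator $L$ with constants controlled by only finitely many $C^k$-norms of $A$ and $A^{-1}$. This requires checking that the generalized-Laplacian expansion constants are governed by the $\tilde g$-geometry, equivalently by $\|A\|_{C^k},\|A^{-1}\|_{C^k}$, uniformly over the relevant family of coefficients; alternatively one may run a direct parametrix argument giving the same conclusion.
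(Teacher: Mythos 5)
Your proposal is correct and follows essentially the same route as the paper: both rest on the subordination formula $(1-L)^\alpha e^{tL}=\frac{1}{\Gamma(-\alpha)}\int_0^\infty \tau^{-\alpha-1}e^{-\tau}e^{(t+\tau)L}\,\der\tau$ combined with the small-time $C^2$ bounds on the heat kernel of $e^{rL}$ and its time derivative, with constants depending only on finitely many $C^k$-norms of $A$ and $A^{-1}$. The only difference is cosmetic: for the difference estimate you differentiate the subordinated kernel in time and then integrate over $[s,t]$, whereas the paper integrates the time derivative of the heat kernel first and then subordinates — a Fubini-swapped version of the same computation yielding the same bound.
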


\begin{proof}
    By standard heat kernel expansion techniques (see, e.g., \cite{taylor2011pseudodifferential}, \cite{berline1992heat}), one obtains that for any integers $l,j\ge0$
    \begin{equation}\label{eq:heat_kernel_bound}
        \| \partial_t^j q^{A,0}_t \|_{C^l(\manifold\times\manifold)} \lesssim_{A,\manifold,l} t^{-d/2-l/2-j} + 1,
    \end{equation}
    where the implicit constant can be chosen uniformly in $A$ provided that $A$ and $A^{-1}$ are uniformly bounded in the $C^k$ norm for some sufficiently large $k$ (which may depend on $l,d$).

    Next, by spectral calculus we have the representation
    \[
    (1-L)^\alpha = \frac{1}{\Gamma(-\alpha)} \int_0^\infty \tau^{-1-\alpha} e^{-\tau} e^{\tau L} \der \tau,
    \]
    so that
    \[
    (1-L)^\alpha e^{tL} = \frac{1}{\Gamma(-\alpha)} \int_0^\infty \tau^{-1-\alpha} e^{-\tau} e^{(t+\tau)L} \der\tau.
    \]
    
    Thus, the integral kernel $q^{A,\alpha}_t(x,y)$ of $(1-L)^\alpha e^{tL}$ is given by
    \[
    q^{A,\alpha}_t(x,y) = \frac{1}{\Gamma(-\alpha)} \int_0^\infty \tau^{-1-\alpha} e^{-\tau} \, q^{A,0}_{t+\tau}(x,y) \der \tau.
    \]

    To estimate the $C^2$ norm, we use the bound \eqref{eq:heat_kernel_bound} for $l=2$  and $j=0$:
    \[
    \|q^{A,\alpha}_t\|_{C^2} \lesssim_\alpha \int_0^\infty \tau^{-\alpha-1} e^{-\tau} (t+\tau)^{-d/2-1}\der \tau+  \int_0^\infty \tau^{-\alpha-1} e^{-\tau} \der \tau.
    \]
    The second term is finite and does not depend on $t$. Then we bound the first term by
    \begin{align*}
        &\int_0^\infty \tau^{-1-\alpha} e^{-\tau} (t+\tau)^{-d/2-1} \der \tau 
        \\\leq & t^{-d/2-\alpha-1} \int_0^\infty u^{-1-\alpha}(1+u)^{-d/2-1} e^{-ut} \der u
        \\ \leq& t^{-d/2-\alpha-1}\int_0^\infty u^{-1-\alpha}(1+u)^{-d/2-1}\der u.
    \end{align*}
    $\int_0^\infty u^{-1-\alpha}(1+u)^{-d/2-1}\der u $ is a finite constant that depends on $\alpha$ since $-1-\alpha>d/2-1\geq 0$ and
$-1-\alpha-d/2-1<-1$.   Hence, there exists a constant $C=C(A,\alpha,T)$ such that
    \[
    \|q^{A,\alpha}_t\|_{C^2(\manifold\times\manifold)} \le C\, t^{-d/2-\alpha-1}.
    \]
    
    For the time difference estimate, note that
    \[
    q^{A,\alpha}_t - q^{A,\alpha}_s = \frac{1}{\Gamma(-\alpha)} \int_0^\infty r^{-1-\alpha} e^{-r} ( q^{A,0}_{t+r} - q^{A,0}_{s+r} ) \der r.
    \]
    Applying the heat kernel estimates to the difference,  
    \begin{align*}
        &\| \kernelqAalpha{0}{t+r}-\kernelqAalpha{0}{s+r}\|_{C^2(\manifold\times\manifold)}
        \\ &\leq \int_s^t\|\partial_t \kernelqAalpha{0}{\tau+r}\|_{C^2(\manifold\times\manifold)}\der \tau
        \\ &\lesssim_{\manifold,A}  (s+r)^{-d/2-1}-(t+r)^{-d/2-1}+(t-s),
    \end{align*}
    one deduces that
    \begin{align*}
        &\|\kernelqAalpha{\alpha}{t}-\kernelqAalpha{\alpha}{s}\|_{C^2(\manifold\times\manifold)}
        \\ &\lesssim_{\manifold,A}  \int_0^\infty r^{-1-\alpha}e^{-r}((s+r)^{-d/2-1}-(t+r)^{-d/2-1}+(t-s))
        \\ &\lesssim\int_0^\infty
 r^{-1-\alpha}((s+r)^{-d/2-1}-(t+r)^{-d/2-1})\der r +(t-s)\int_0^\infty r^{-1-\alpha}e^{-r} \der r
 \\ &\lesssim_{\alpha}  s^{-d/2-1-\alpha}-t^{-d/2-1-\alpha} +(t-s)
 \\ &\lesssim_T s^{-d/2-1-\alpha}-t^{-d/2-1-\alpha}.
 \end{align*}
\end{proof}

\begin{lma}
    \label{lma: heat kernel difference trace bound}
     Let $T>0$, and $\alpha\in [-d/2-1, -d/2)$.Then for all $t\in(0,T]$, we can find a positive constant $C=C(A,\alpha, T,\manifold)$, such that 
    \begin{equation}
        \tr_{L^2}((1-L)^\alpha (1-e^{tL/2})^2)\leq C t^{-d/2-\alpha} .
    \end{equation} 
    Moreover, the constant $C$ can be chosen uniformly in $A$, provided that $A$ and $A^{-1}$ are uniformly bounded in the $C^k$ norm with some sufficiently large $k$ which may depend on $d$.
\end{lma}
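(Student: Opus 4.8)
The plan is to reduce the trace to an explicit spectral sum and then to estimate that sum by a dyadic decomposition of the eigenvalues, splitting at the scale $\lambda\sim 1/t$ dictated by the heat semigroup. Writing $L\phi_j=-\lambda_j\phi_j$ for an orthonormal eigenbasis $\{\phi_j\}$ of $L^2(\manifold)$ with $\lambda_j\ge 0$, both $(1-L)^\alpha$ and $e^{tL/2}$ act diagonally, so by the functional calculus used in Lemma \ref{lma kernel expansion via eigenfunction},
\begin{equation*}
S(t):=\tr_{L^2}\bigl((1-L)^\alpha(1-e^{tL/2})^2\bigr)=\sum_j (1+\lambda_j)^\alpha\bigl(1-e^{-t\lambda_j/2}\bigr)^2 .
\end{equation*}
Setting $\gamma:=-d/2-\alpha\in(0,1]$, the goal is to show $S(t)\le C t^{\gamma}$.

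Next I would record a Weyl-type counting bound. From the uniform diagonal heat-kernel estimate \eqref{eq:heat_kernel_bound} (with $l=j=0$) one gets $\tr(e^{sL})=\int_\manifold \kernelqAalpha{0}{s}(x,x)\,\der x\lesssim s^{-d/2}+1$, with an implicit constant uniform in $A$ under the stated $C^k$ bounds on $A,A^{-1}$. Feeding this into the elementary inequality $e^{-s\mu}N(\mu)\le \tr(e^{sL})$ for the counting function $N(\mu):=\#\{j:\lambda_j\le\mu\}$ and optimizing $s=1/\mu$ yields $N(\mu)\lesssim 1+\mu^{d/2}$, again uniformly in $A$. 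The same computation shows $\tr((1-L)^\alpha)<\infty$ (cf. Proposition \ref{prop: finite trace}), which settles the range $t\in[1,T]$ at once: there $(1-e^{-t\lambda_j/2})^2\le 1$ gives $S(t)\le \tr((1-L)^\alpha)\le \tr((1-L)^\alpha)\,t^{\gamma}$.

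The heart of the argument is the regime $t\in(0,1)$, where I would split the sum at the threshold $\mu=1/t$. For the high modes $\lambda_j>1/t$ I use $(1-e^{-t\lambda_j/2})^2\le 1$ and sum $(1+\lambda_j)^\alpha$ over dyadic shells $2^k t^{-1}<\lambda_j\le 2^{k+1}t^{-1}$; each shell contributes $\lesssim (2^k t^{-1})^\alpha N(2^{k+1}t^{-1})\lesssim 2^{k(\alpha+d/2)}t^{-\alpha-d/2}$, and since $\alpha+d/2<0$ the geometric series converges, giving $\lesssim t^{-\alpha-d/2}=t^{\gamma}$. For the low modes $\lambda_j\le 1/t$ I use $(1-e^{-t\lambda_j/2})^2\le (t\lambda_j/2)^2$ together with $\lambda_j^2\le(1+\lambda_j)^2$, reducing to $t^2\sum_{\lambda_j\le 1/t}(1+\lambda_j)^{\alpha+2}$; a dyadic sum up to $\mu=1/t$, using $N(\mu)\lesssim \mu^{d/2}$, gives $\sum_{\lambda_j\le 1/t}(1+\lambda_j)^{\alpha+2}\lesssim t^{-(\alpha+2+d/2)}$ because the exponent $\alpha+2+d/2\ge 1$ is positive (this is exactly where the lower bound $\alpha\ge -d/2-1$ enters, forcing the series to be dominated by its top term). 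Hence the low-mode contribution is $\lesssim t^2\,t^{-\alpha-2-d/2}=t^{\gamma}$, and adding the two pieces proves the claim for $t\in(0,1)$; with the $t\in[1,T]$ case and the uniform heat-trace constant this gives the full statement, uniform in $A$.

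The main obstacle is the borderline nature of the summability: the exponent $\alpha+d/2$ lies precisely in $[-1,0)$, so the high-frequency series converges only by virtue of the strict inequality $\alpha<-d/2$ (the endpoint $\alpha=-d/2$ would degenerate both the series and $t^{\gamma}$), while the low-frequency geometric series is summed off its top term only because $\alpha\ge -d/2-1$ keeps $\alpha+2+d/2>0$. Choosing the split scale to be exactly $1/t$ is what makes the two one-sided powers of $t$ meet at the sharp exponent $t^{-d/2-\alpha}$; any other threshold would lose the optimal rate. Beyond these elementary estimates the only genuine input is the uniform heat-trace bound, which is already available from \eqref{eq:heat_kernel_bound}, so the uniformity in $A$ comes for free.
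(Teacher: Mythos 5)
Your proof is correct, but it takes a genuinely different route from the paper's. The paper never passes to the spectral sum: it uses the elementary inequality $(1-e^{t\lambda/2})^2\leq 2(1-e^{t\lambda})$ for $\lambda\le 0$, writes $1-e^{tL}=\int_0^t(-L)e^{sL}\,\der s$, identifies the trace of $(-L)(1-L)^\alpha e^{sL}$ with the integral of its kernel over the diagonal, bounds this by $\|A\|_{C^1}|\manifold|\,\|q^{A,\alpha}_s\|_{C^2}$, and then invokes Lemma \ref{lma:heat kernel estimate C2} to integrate $s^{-d/2-\alpha-1}$ over $(0,t]$, which converges precisely because $\alpha<-d/2$ and yields $t^{-d/2-\alpha}$. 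You instead diagonalize everything, convert the on-diagonal heat trace bound into a Weyl counting estimate $N(\mu)\lesssim 1+\mu^{d/2}$, and run a dyadic decomposition split at $\lambda\sim 1/t$, with the two exponent constraints $\alpha<-d/2$ (high modes) and $\alpha\ge -d/2-1$ (low modes) entering exactly where they should. What your route buys: it needs only the $l=j=0$, on-diagonal case of the heat kernel bound \eqref{eq:heat_kernel_bound} rather than the $C^2$ kernel estimates, and it covers the endpoint $\alpha=-d/2-1$ directly — note that the paper's proof cites Lemma \ref{lma:heat kernel estimate C2}, which is stated only for the open interval $\alpha\in(-d/2-1,-d/2)$, so your argument handles the closed left endpoint more cleanly. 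What the paper's route buys: it is a few lines long given that Lemma \ref{lma:heat kernel estimate C2} is already established (and that lemma is needed anyway for the stochastic convolution estimates, where gradients of the kernel weighted by $Q$ appear and a pure trace/counting argument would not suffice), and the uniformity in $A$ is inherited in a single step. Both arguments produce the same $\alpha$-dependent blow-up of the constant as $\alpha\uparrow -d/2$, which the statement permits.
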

\begin{proof}
    Note that $(1-e^{t\lambda/2})^2\leq 2(1-e^{t\lambda})$ for all $\lambda\leq 0$, and that $L$ is negative semidefinite,
    \begin{align*}
        &\tr((1-L)^\alpha (1-e^{tL/2})^2)
        \\ \leq&  2 \tr((1-L)^\alpha) (1-e^{tL}))
        \\=&2\int_0^t\tr((-L)(1-L)^{\alpha}e^{sL})\der s,
    \end{align*}
     where the trace is taken with respect to $L^2$.
    Note that trace of the operator $(-L)(1-L)^\alpha e^{sL}$ is the integral of its kernel on the diagonal. Therefore, the trace can be bounded by $\|A\|_{C^1}|\manifold|\|\kernelqAalpha{\alpha}{s}\|_{C^2}$. Then we apply Lemma \ref{lma:heat kernel estimate C2} to get
    
    \begin{align*}
        &\tr_{L^2}((1-L)^\alpha (1-e^{tL/2})^2)
        \\ &\lesssim_{A,\manifold}  \int_0^t \|\kernelqAalpha{\alpha}{s}\|_{C^2}\der s
        \\&\lesssim_{A,\manifold,T,\alpha} t^{-d/2-\alpha}.
    \end{align*}
\end{proof}


\subsection{Stochastic integrals on Hilbert spaces}
We will briefly recall the stochastic analysis on Hilbert spaces introduced in \cite{da2014stochastic}, with slightly different formulation.

Assume that $U$ is a Hilbert space and in this subsection we denote the inner product on it by $(\cdot,\cdot)_U$, or simply $(\cdot,\cdot)$ when there is no ambiguity. Assume $V,H$ are also Hilbert spaces. 
\begin{defn}
    \label{def gaussian}
    Let $W$ be a $U$-valued random variable, $Q\in L(U^\ast, U)$ and $Q^\ast=Q$. $W$ is called a centered Gaussian variable with covariance $Q$ if for any $f$ in $U^\ast$, $(f,u)$ is a centered Gaussian variable with variance $(f,Qf)$. 
\end{defn}
\begin{prop}
    Let $W$ be a $U$-valued centered Gaussian variable with covariance $Q$, and let $\Phi\in L(U,V)$. Then $\Phi W$ is a $V$-valued centered Gaussian variable with covariance $\Phi Q \Phi^\ast$.
\end{prop}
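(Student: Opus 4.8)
The plan is to verify the defining property from Definition~\ref{def gaussian} directly for the $V$-valued variable $\Phi W$. According to that definition, it suffices to establish two things: that the candidate covariance $\Phi Q \Phi^\ast$ is a self-adjoint element of $L(V^\ast, V)$, and that for every $g \in V^\ast$ the real-valued pairing $(g, \Phi W)$ is a centered Gaussian with variance $(g, \Phi Q \Phi^\ast g)$.

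First I would record the mapping properties: since $\Phi \in L(U, V)$ we have $\Phi^\ast \in L(V^\ast, U^\ast)$, so that $\Phi Q \Phi^\ast \in L(V^\ast, V)$ is a well-defined composition. Its self-adjointness, $(\Phi Q \Phi^\ast)^\ast = \Phi Q \Phi^\ast$, then follows from a direct computation: for $g, k \in V^\ast$ the adjoint relation gives $(g, \Phi Q \Phi^\ast k) = (\Phi^\ast g, Q \Phi^\ast k)$, and since $Q^\ast = Q$ this equals $(\Phi^\ast k, Q \Phi^\ast g) = (k, \Phi Q \Phi^\ast g)$, which is the symmetry required of a covariance operator.

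The key step is the adjoint identity applied twice. For a fixed $g \in V^\ast$, the defining relation of the adjoint gives $(g, \Phi W) = (\Phi^\ast g, W)$ as real-valued random variables. Setting $f = \Phi^\ast g \in U^\ast$ and invoking the hypothesis that $W$ is centered Gaussian with covariance $Q$, the right-hand side is a centered Gaussian with variance $(f, Q f) = (\Phi^\ast g, Q \Phi^\ast g)$. A second application of the adjoint relation, now pairing $g$ against $\Phi(Q \Phi^\ast g)$ with $Q \Phi^\ast g \in U$, rewrites this variance as $(\Phi^\ast g, Q \Phi^\ast g) = (g, \Phi Q \Phi^\ast g)$, which is precisely the variance demanded by Definition~\ref{def gaussian}. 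Since $g \in V^\ast$ was arbitrary, I would conclude that $\Phi W$ is a centered Gaussian variable with covariance $\Phi Q \Phi^\ast$.

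There is no substantive obstacle beyond bookkeeping: the whole argument rests on applying the adjoint pairing identity twice and on the reflexivity of the Hilbert spaces so that the adjoints of operators between $U^\ast, V^\ast$ and $U, V$ behave as expected. The only point requiring a little care is keeping track of which duality pairing $(\cdot,\cdot)$ is in play at each step; the preservation of Gaussianity under the relabeling $g \mapsto \Phi^\ast g$ is immediate, since $(g, \Phi W)$ and $(\Phi^\ast g, W)$ are literally the same random variable.
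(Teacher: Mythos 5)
Your proof is correct. The paper states this proposition without proof (treating it as a standard fact in the spirit of \cite{da2014stochastic}), and your argument—verifying the defining property of Definition~\ref{def gaussian} for $\Phi W$ via the adjoint identity $(g,\Phi W)=(\Phi^\ast g, W)$, applied once to transfer Gaussianity and once to rewrite the variance $(\Phi^\ast g, Q\Phi^\ast g)=(g,\Phi Q\Phi^\ast g)$—is exactly the expected direct verification, including the check that $\Phi Q\Phi^\ast$ is a self-adjoint element of $L(V^\ast,V)$.
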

\begin{rmk}
\label{rmk quadratic natural}
    The covariance on $U$ naturally lies in $U\otimes U$, therefore it can be viewed as an operator from $U^\ast$ to $U$. It is very convenient to identify the Hilbert space $U$ with its dual space $U^\ast$ with the canonical isometry $\mathcal{J}: U\rightarrow U^\ast$, $e^\ast=(e,\cdot)_U$. , as in the book \cite{da2014stochastic}.  The covariance operator defined in their book is $Q\circ\mathcal{J}$. The problem with the identification is that we will work with Sobolev spaces and will identify $(H^\alpha)^\ast$ to $H^{-\alpha}$ instead of $L^2$. When $U=H^\alpha$, $U^\ast=H^{-\alpha}, \mathcal{J}=(1-\Delta)^{\alpha}$.  If we stick to our notation, when there is an embedding $\iota:H^\alpha \hookrightarrow H^\beta$, $\iota^\ast$ in our notation will be the natural embedding from $H^{-\beta}$ to $H^{-\alpha}$, while in \cite{da2014stochastic}, $\iota^\ast$ is $(1-\Delta)^{\beta-\alpha}$ from $H^{\alpha}$ to $H^\beta$. If we use our notation the covariance operator is the ``same'' in different spaces and their integral kernels are the same. But when we take the trace, we need to be careful and specify in which space we want to take the trace, and we take the trace of $Q\circ \mathcal{J}$. If not specified, we will take the trace of operators as they are on $L^2$ in Section 3 and Section 4. 
\end{rmk}
 \begin{prop} \label{prop Q Wiener existence}
     There exists $U$-valued centered Gaussian variable $X$ with covariance $Q$ if and only if $Q$ is positive semidefinite and $\tr_U Q<\infty$. Moreover, $\Expectation |X|^p\lesssim_p (\tr_U Q)^{p/2}$ for all $p\in[1,\infty)$. 
 \end{prop}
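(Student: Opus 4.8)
The plan is to reduce the statement to a self-adjoint operator on $U$, treat the two implications separately, and construct $X$ as an explicit random series in the sufficiency direction; throughout I take $U$ separable, as in all our applications. The first step is to replace $Q\in L(U^\ast,U)$ by $T:=Q\mathcal{J}\in L(U)$, where $\mathcal{J}:U\to U^\ast$ is the canonical isometry of Remark~\ref{rmk quadratic natural}. Using $Q^\ast=Q$ one checks that $T$ is self-adjoint on $U$, that $Q$ is positive semidefinite exactly when $T\geq0$, and that $\tr_U Q=\tr T$; fixing an orthonormal basis $\{e_k\}$ of $U$ and setting $e_k^\ast=\mathcal{J}e_k$ gives $\tr_U Q=\sum_k(e_k^\ast,Qe_k^\ast)$.

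For necessity, positive semidefiniteness is immediate from $(f,Qf)=\Var(f,X)\geq0$ for every $f\in U^\ast$. The subtle point is the finiteness of the trace, and here one must use that $X$ is genuinely $U$-valued, i.e. $|X|<\infty$ almost surely. I would argue through the Laplace transform: since $e^{-|X|^2}>0$ a.s. we have $\Expectation e^{-|X|^2}>0$, while dominated convergence gives $\Expectation e^{-|X|^2}=\lim_n\Expectation\exp(-\sum_{k=1}^n(e_k^\ast,X)^2)$. The vector $((e_1^\ast,X),\dots,(e_n^\ast,X))$ is centered Gaussian in $\mathbb{R}^n$ with covariance matrix $C^{(n)}=\big((e_j^\ast,Qe_k^\ast)\big)_{j,k\le n}$, so the Gaussian integral formula yields $\Expectation\exp(-\sum_{k=1}^n(e_k^\ast,X)^2)=\det(I+2C^{(n)})^{-1/2}$. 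Thus $\det(I+2C^{(n)})$ stays bounded in $n$, and the elementary bound $\det(I+2C)\geq1+2\tr C$ for $C\geq0$ forces the partial traces $\sum_{k=1}^n(e_k^\ast,Qe_k^\ast)$ to be uniformly bounded, giving $\tr_U Q<\infty$. (Alternatively, Fernique's theorem yields $\Expectation|X|^2<\infty$ directly, and $\Expectation|X|^2=\tr_U Q$.)

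For sufficiency, if $T\geq0$ and $\tr T<\infty$ then $T$ is compact, so I diagonalize it: there is an orthonormal basis $\{e_k\}$ with $Te_k=\lambda_k e_k$, $\lambda_k\geq0$, $\sum_k\lambda_k=\tr_U Q$. On a probability space carrying i.i.d. standard Gaussians $\beta_k$ I set $X=\sum_k\sqrt{\lambda_k}\,\beta_k e_k$; the partial sums are Cauchy in $L^2(\Omega;U)$ since $\Expectation\big|\sum_{m<k\le n}\sqrt{\lambda_k}\beta_k e_k\big|^2=\sum_{m<k\le n}\lambda_k$, so $X\in L^2(\Omega;U)$ is $U$-valued. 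To identify the covariance, for $f=\mathcal{J}g$ the pairing $(f,X)=(g,X)_U=\sum_k\sqrt{\lambda_k}\beta_k(g,e_k)_U$ is a limit of centered Gaussians, hence centered Gaussian, with variance $\sum_k\lambda_k(g,e_k)_U^2=(g,Tg)_U=(f,Qf)$, as wanted.

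Finally, the moment bound drops out of the same representation, $|X|^2=\sum_k\lambda_k\beta_k^2$: for $p\geq2$ the triangle inequality in $L^{p/2}(\Omega)$ gives $\big\||X|^2\big\|_{L^{p/2}}\leq\sum_k\lambda_k\,\|\beta_1^2\|_{L^{p/2}}\lesssim_p\tr_U Q$, so $\Expectation|X|^p\lesssim_p(\tr_U Q)^{p/2}$, while for $p\in[1,2)$ it is Jensen's inequality. I expect the main obstacle to be precisely the necessity of the finite trace: unlike every other step it is not a formal manipulation but demands converting the qualitative fact $|X|<\infty$ a.s. into the quantitative statement $\tr_U Q<\infty$, which is exactly what the Laplace-transform/determinant estimate (or Fernique's theorem) provides.
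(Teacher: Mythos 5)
Your proof is correct. The paper itself does not prove this proposition at all: its ``proof'' is a citation to \cite{da2014stochastic} (Propositions 2.18 and 2.19), and your argument is in substance a self-contained reconstruction of the standard proof found there --- the Karhunen--Lo\`eve series $X=\sum_k\sqrt{\lambda_k}\,\beta_k e_k$ for sufficiency and the moment bound, and the Laplace-transform/determinant estimate (equivalently Fernique) for the genuinely nontrivial step, the necessity of $\tr_U Q<\infty$; your reduction to $T=Q\mathcal{J}$ also handles cleanly the paper's convention of viewing $Q$ as a map $U^\ast\to U$. The only point worth adding is in the final step: the bound $\Expectation|X|^p\lesssim_p(\tr_U Q)^{p/2}$ must hold for \emph{every} centered Gaussian $X$ with covariance $Q$, while you derive it from your particular series representation; this transfers because the characteristic functional $\Expectation e^{i(f,X)}=e^{-\frac12(f,Qf)}$ determines the law of $X$ on the separable space $U$, so any such $X$ is equal in law to your series. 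With that one-line remark included, the argument is complete.
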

\begin{proof}
    See \cite{da2014stochastic}, Proposition 2.18 and Proposition 2.19.
\end{proof}

Now assume $Q$ to be trace class on $U$ ($Q\circ \mathcal{J}$ is trace class).
\begin{defn}
    \label{def Q Wiener}
    A $U$-valued stochastic process $W_t, t\geq 0$, is called a $Q$-Wiener process if
    \begin{itemize}
        \item $W_0=0$,
        \item $W$ has continuous trajectories,
        \item W has independent increments,
        \item $W_t-W_s$ is a centered Gaussian variable with covariance $(t-s)Q$, for any $t>s\geq 0$.
    \end{itemize}
\end{defn}

When $W_t$ is an $\mathcal{F}_t$-adapted $Q$-Wiener process, and $\Phi_t$ is a predictable process in the space of $L_0^2$, the  subspace of unbounded operator from $U$ to $V$ such that $\|\Phi_t\|_{L_0^2}=(\tr_{V} \Phi_tQ\Phi_t^\ast)^{1/2}$ is finite, we call $\Phi_t$ to be ``stochastically integrable'' on $[0,T]$, if $\int_0^T \|\Phi_t\|_{L_0^2}^2\der t $ is finite almost surely. By \cite{da2014stochastic}, the stochastic integral $\int_0^t \Phi_s \der W_s $ is well defined for $t\in[0,T]$ if $\Phi_t$ is stochastically integrable on $[0,T]$. The quadratic variation of the stochastic integral is given by $\int_0^t\Phi_s Q \Phi_s^\ast\der s$.

We will need the following estimate for the stochastic integrals on general Hilbert spaces:  

\begin{prop}
    \label{prop moment estimate via quadratic}
    For every $p\geq 2$, we can find positive constant $C$ such that for all  $t>s>0$
    \begin{equation}
        \lpnormprob{\norm{\int_s^t \Phi_r\der W_r}{V}}{p} \leq C \left(\int_s^t \lpnormprob{\tr_V (\Phi_r Q\Phi_r^\ast)}{p/2}\right)^{1/2}
    \end{equation}     
\end{prop}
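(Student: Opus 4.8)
The plan is to reduce the estimate to two standard ingredients: the Hilbert-space Burkholder--Davis--Gundy (BDG) inequality and Minkowski's integral inequality. Write $M_t=\int_s^t \Phi_r\der W_r$, a $V$-valued continuous martingale. As recalled in this subsection, its operator-valued quadratic variation is $\int_s^t \Phi_r Q\Phi_r^\ast\der r$, so that its scalar (trace) quadratic variation is
\begin{equation*}
[M]_t=\int_s^t \tr_V(\Phi_r Q\Phi_r^\ast)\der r.
\end{equation*}
The whole estimate is then obtained by controlling $\Expectation\norm{M_t}{V}^p$ by a power of $[M]_t$ and then pulling the expectation inside the time integral.

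First I would invoke the vector-valued BDG inequality for stochastic integrals against a $Q$-Wiener process (see \cite{da2014stochastic}), which provides, for each $p\geq 2$, a constant $C_p$ such that
\begin{equation*}
\Expectation\norm{M_t}{V}^p \leq C_p\,\Expectation\left(\int_s^t \tr_V(\Phi_r Q\Phi_r^\ast)\der r\right)^{p/2}.
\end{equation*}
This is the only genuinely probabilistic input; everything that follows is a deterministic rearrangement. Setting $f(r)=\tr_V(\Phi_r Q\Phi_r^\ast)\geq 0$, I would then apply Minkowski's integral inequality for the seminorm $\lpnormprob{\cdot}{p/2}$, which is a genuine norm precisely because $p/2\geq 1$. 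This yields
\begin{equation*}
\lpnormprob{\int_s^t f(r)\der r}{p/2}\leq \int_s^t \lpnormprob{f(r)}{p/2}\der r,
\end{equation*}
that is, $\left(\Expectation\left(\int_s^t f(r)\der r\right)^{p/2}\right)^{2/p}\leq \int_s^t \lpnormprob{f(r)}{p/2}\der r$. Combining this with the BDG bound and taking $p$-th roots gives exactly the asserted inequality, with $C=C_p^{1/p}$.

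The only point requiring genuine care is the BDG step: one must cite the correct form of the Hilbert-space inequality and confirm, using the preceding subsection, that the quadratic variation of the $L_0^2$-valued integrand is indeed $\int_s^t \Phi_r Q\Phi_r^\ast\der r$, so that taking its trace produces the integrand $f$. I note that the hypothesis $p\geq 2$ is used twice, and essentially: once so that the vector-valued BDG inequality is available in the stated form, and once so that $\lpnormprob{\cdot}{p/2}$ is subadditive and Minkowski's integral inequality applies. For $p<2$ the second step would fail and the inequality as stated would no longer hold in general.
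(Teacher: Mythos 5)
Your proposal is correct and follows essentially the same route as the paper: the paper simply declares the proposition ``a direct consequence of Theorem 4.37 in \cite{da2014stochastic}'', which is exactly the Hilbert-space BDG inequality you invoke, with the Minkowski-integral-inequality step (needed to pull $\lpnormprob{\cdot}{p/2}$ inside the time integral) left implicit. Your write-up just makes that implicit rearrangement explicit, which if anything is a more complete justification of the stated form of the bound.
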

The proposition above is a direct consequence of Theorem 4.37 in \cite{da2014stochastic}.

We will also need the It\^o's formula below, which is a special case of \cite{da2014stochastic}, Theorem 4.32.
\begin{prop}[It\^o's formula]
\label{prop ito's formula}
    Assume $\Phi$ is an $L_0^2$-valued process stochastically integrable in $[0,T]$, $\phi$ is a $V$-valued predictable process Bochner integrable on $[0,T]$, $\prob$-a.s., and $X_0$ is $\mathcal{F}_0$-measurable $V$-valued random variable.
    Then $X_t=X_0+\int_0^t\phi_s\der s+\int_0^t \Phi_s\der W_s, \: t\in[0,T] $ is well-defined. Suppose $F$ is second order Fr\'echet differentiable on $V$ and the first and second derivatives are uniformly continuous on bounded sets in $V$. Then
    
\[
F(X_t)=F(X_0)+\int_0^t DF(X_s)\Phi_s \der W_s+\int_0^t(DF(X_s)\phi_s+ \tr(D^2F(X_s)\Phi_sQ\Phi_s^\ast)) \der s,
\]
where $DF(x)$ is the first order derivative of $F$ at $x\in V$ which is in $V^\ast=L(V,\mathbb{R})$, $D^2F(x)$ is the second order derivative of $F$ at $x\in V$ which is in $ L(V,V^\ast)$ .  Note that $D^2F(X_s)\Phi_sQ\Phi_s^\ast\in L(V^\ast, V^\ast)$, and the trace is the usual trace of an operator on $V^\ast$ in contrast to the $\tr_U$ where we need to compose the operator with $\mathcal{J}$ as in Remark   \ref{rmk quadratic natural}.
\end{prop}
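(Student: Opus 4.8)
The plan is to deduce the statement directly from Theorem 4.32 of \cite{da2014stochastic}, the general It\^o formula for Hilbert-space-valued processes driven by a $Q$-Wiener process, by matching hypotheses and then carefully translating the duality conventions set up in Remark \ref{rmk quadratic natural}. First I would check that the structural assumptions coincide: $X_0$ is $\mathcal{F}_0$-measurable and $V$-valued, $\phi$ is predictable and Bochner-integrable on $[0,T]$ almost surely, and $\Phi$ is a predictable $L_0^2$-valued process stochastically integrable on $[0,T]$. These are exactly the conditions under which $X_t=X_0+\int_0^t\phi_s\der s+\int_0^t\Phi_s\der W_s$ is a well-defined continuous $V$-valued process and under which Theorem 4.32 is stated. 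The regularity imposed on $F$, namely second-order Fr\'echet differentiability with first and second derivatives uniformly continuous on bounded subsets of $V$, is likewise precisely the hypothesis of that theorem, so nothing further is needed there; the drift term $DF(X_s)\phi_s$ and the stochastic-integral term $\int_0^t DF(X_s)\Phi_s\der W_s$ transcribe verbatim.

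The substance of the argument is the translation of the covariance and the trace. In \cite{da2014stochastic} the driving process carries a symmetric nonnegative trace-class covariance understood as an operator $U\to U$, whereas here $Q$ is viewed as an operator $U^\ast\to U$, and the genuine covariance in the book's sense is $Q\circ\mathcal{J}$ with $\mathcal{J}\colon U\to U^\ast$ the canonical Riesz isometry. I would therefore rewrite the It\^o correction term appearing in Theorem 4.32 using this identification and verify that, after inserting the Riesz maps and the book's adjoints, it becomes $\tr\bigl(D^2F(X_s)\,\Phi_s Q\Phi_s^\ast\bigr)$. The essential bookkeeping is to track types: with $\Phi_s\in L(U,V)$, $Q\in L(U^\ast,U)$ and $\Phi_s^\ast\in L(V^\ast,U^\ast)$, the composite $\Phi_s Q\Phi_s^\ast$ maps $V^\ast\to V$, and since $D^2F(X_s)\in L(V,V^\ast)$ the product $D^2F(X_s)\Phi_s Q\Phi_s^\ast$ maps $V^\ast\to V^\ast$. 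One then reads off that the trace in the formula is the ordinary operator trace on $V^\ast$, and in particular is \emph{not} a $\tr_U$-type quantity requiring an extra factor $\mathcal{J}$, exactly as the statement asserts.

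I expect the main obstacle to be precisely this reconciliation of dual-space conventions, rather than any analytic estimate. The delicate point is that the adjoint $\Phi_s^\ast\colon V^\ast\to U^\ast$ used here is the genuine Banach-space adjoint, while the adjoint appearing in \cite{da2014stochastic} differs, through that book's identifications, by composition with the relevant Riesz maps; one must confirm that these discrepancies cancel so that the scalar correction term is unchanged and the trace legitimately lands on $V^\ast$. Once the identity of the two correction terms is established, the conclusion of Theorem 4.32 specializes to the displayed formula, completing the reduction.
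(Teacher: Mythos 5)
Your proposal is correct and follows exactly the paper's route: the paper presents this proposition as a special case of Theorem 4.32 in \cite{da2014stochastic}, with the only real work being the translation of the covariance and trace conventions of Remark \ref{rmk quadratic natural}, which is precisely the bookkeeping you carry out. Nothing is missing.
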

 
We will need the following proposition to compute the trace. For a function $u$ on the manifold, we denote the operator of multiplication (on some space of functions on $\manifold$) with $u$ by $\multiplyfunc{u}:=f\mapsto uf$, and for a tangent vector-valued function $X$, we denote the operator of multiplication (on some space of tangent vector-valued functions so that the product is well defined) by the inner product $\multiplyfunc{X\cdot}:=Y\mapsto X\cdot Y$.
\begin{prop}
    \label{prop trace uqu}
    Suppose $u\in L^2(\manifold)$,and  $Q\in C(\manifold\times\manifold , \tanbdl\boxtimes\tanbdl)$  is the kernel of a positive semidefinite operator $Q$ on $L^2(\manifold,\tanbdl)$. Then $u(x)Q(x,y)u(y)$ is the kernel of a semidefinite operator $\multiplyfunc{u}Q\multiplyfunc{u}$ and its trace is given by
    \[\tr(\multiplyfunc{u}Q\multiplyfunc{u})=\int_\manifold u(x)^2 \tr Q(x,x)\der x. \]
\end{prop}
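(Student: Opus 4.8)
The plan is to diagonalize $Q$ by a vector-bundle version of Mercer's theorem and then to compute the trace of $\multiplyfunc{u}Q\multiplyfunc{u}$ in the resulting eigenbasis. First I would record the elementary facts. Writing out the composition, for $Y\in L^2(\manifold,\tanbdl)$ one has $(\multiplyfunc{u}Q\multiplyfunc{u}Y)(x)=u(x)\int_\manifold Q(x,y)u(y)Y(y)\der y$, so the integral kernel is indeed $u(x)Q(x,y)u(y)$. Although $u$ is only in $L^2$, so that $\multiplyfunc{u}$ fails to be bounded on $L^2(\manifold,\tanbdl)$, the operator $\multiplyfunc{u}Q\multiplyfunc{u}$ is still well defined and bounded: multiplication by $u$ maps $L^2\to L^1$ continuously by Cauchy--Schwarz, the continuous (hence bounded) kernel $Q$ maps $L^1\to C(\manifold,\tanbdl)\subset L^\infty$, and multiplication by $u$ maps $L^\infty\to L^2$. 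In fact the kernel $u(x)Q(x,y)u(y)$ lies in $L^2(\manifold\times\manifold)$, with $L^2$ norm bounded by $\cknorm{Q}{0}\LtwoMnorm{u}^2$, so the operator is Hilbert--Schmidt.

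Next I would invoke Mercer's theorem. Since $Q$ is positive semidefinite with continuous kernel on the compact manifold $\manifold$, the vector-bundle analogue of Mercer's theorem (obtained from the classical scalar/matrix-valued statement by a partition of unity subordinate to trivializing charts) provides an orthonormal system $\{\psi_k\}$ of $L^2(\manifold,\tanbdl)$ consisting of continuous eigensections, with eigenvalues $\lambda_k\ge0$, such that $Q(x,y)=\sum_k\lambda_k\,\psi_k(x)\otimes\psi_k(y)$ with absolute and uniform convergence. Restricting to the diagonal and taking the fibrewise trace (under the metric identification $\tanbdl\otimes\tanbdl\cong\mathrm{End}(\tanbdl)$, for which $\tr(v\otimes v)=v\cdot v$) yields the pointwise identity $\tr Q(x,x)=\sum_k\lambda_k|\psi_k(x)|^2$. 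This expansion also gives the positive semidefiniteness asserted in the statement: for any $Z\in L^1$ (with $(Z,\psi_k)$ finite since $\psi_k$ is bounded) the kernel expansion and uniform convergence give $(QZ,Z)=\sum_k\lambda_k|(Z,\psi_k)|^2\ge0$, so taking $Z=uY$ yields $(\multiplyfunc{u}Q\multiplyfunc{u}Y,Y)=(Q(uY),uY)\ge0$.

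Finally I would compute the trace. For the positive operator $\multiplyfunc{u}Q\multiplyfunc{u}$ the trace equals $\sum_k(\multiplyfunc{u}Q\multiplyfunc{u}\psi_k,\psi_k)$ for any orthonormal basis, and I would evaluate it in the basis $\{\psi_k\}$. Because each $\psi_k$ is continuous, hence bounded, $u\psi_k\in L^2(\manifold,\tanbdl)$, so that
\[
(\multiplyfunc{u}Q\multiplyfunc{u}\psi_k,\psi_k)=(Q(u\psi_k),u\psi_k)=\sum_i\lambda_i|(u\psi_k,\psi_i)|^2 .
\]
Summing over $k$, interchanging the two nonnegative sums by Tonelli, and using Parseval in the form $\sum_k|(u\psi_k,\psi_i)|^2=\sum_k|(\psi_k,u\psi_i)|^2=\ltwotan{u\psi_i}^2=\int_\manifold u(x)^2|\psi_i(x)|^2\der x$, I obtain
\[
\tr(\multiplyfunc{u}Q\multiplyfunc{u})=\sum_i\lambda_i\int_\manifold u(x)^2|\psi_i(x)|^2\der x=\int_\manifold u(x)^2\Big(\sum_i\lambda_i|\psi_i(x)|^2\Big)\der x,
\]
the last step again by Tonelli. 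Inserting the diagonal Mercer identity $\sum_i\lambda_i|\psi_i(x)|^2=\tr Q(x,x)$ finishes the proof, and the integral is finite because $\tr Q(x,x)$ is continuous on the compact $\manifold$ while $u\in L^2$.

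I expect the main obstacle to be the justification of the Mercer expansion in the vector-bundle setting, and in particular the pointwise diagonal identity $\tr Q(x,x)=\sum_k\lambda_k|\psi_k(x)|^2$: the bare spectral expansion of a Hilbert--Schmidt operator converges only in $L^2(\manifold\times\manifold)$, which says nothing on the measure-zero diagonal, so it is precisely the uniform and absolute convergence supplied by Mercer's theorem (rather than a mere $L^2$ expansion) that legitimizes the diagonal evaluation. A secondary, bookkeeping-level issue is the domain subtlety caused by $u$ being only square integrable, which is resolved by the $L^2\to L^1\to L^\infty\to L^2$ mapping chain together with the boundedness of each eigensection $\psi_k$.
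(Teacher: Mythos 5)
Your overall strategy (a Mercer expansion followed by a trace computation) is close to the paper's, but there is a genuine gap in the execution: Mercer's theorem supplies an orthonormal \emph{system} of continuous eigensections spanning only $\overline{\image Q}=(\ker Q)^{\perp}$, not an orthonormal basis of $L^2(\manifold,\tanbdl)$, and your argument silently uses completeness twice. In this paper the distinction is not academic: the covariances of interest are divergence-free (e.g.\ $Q=\leray\Theta^{2}(-\Delta)$), so $\ker Q$ contains every gradient field and is infinite dimensional. Writing $T=\multiplyfunc{u}Q\multiplyfunc{u}$ and letting $P$ be the orthogonal projection onto $\overline{\operatorname{span}}\{\psi_k\}$, with an incomplete system (i) the identity $\tr T=\sum_k(T\psi_k,\psi_k)$ fails: the right-hand side computes $\tr(PTP)$, and the omitted terms $(Q(u\chi),u\chi)$ for $\chi\in\ker Q$ are in general strictly positive, since $u\chi\notin\ker Q$; and (ii) the ``Parseval'' step $\sum_k|(\psi_k,u\psi_i)|^2=\ltwotan{u\psi_i}^2$ also fails: the left-hand side equals $\ltwotan{P(u\psi_i)}^2$. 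Your final formula is nonetheless correct, but only because the two errors cancel exactly --- the amount by which (i) undercounts, namely $\sum_i\lambda_i\ltwotan{(1-P)(u\psi_i)}^2$, is precisely the amount by which (ii) overcounts. As written, then, the derivation is not a valid proof.

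The repair is cheap and stays inside your toolkit: adjoin to $\{\psi_k\}$ an arbitrary orthonormal basis of $\ker Q$ and run the identical Tonelli/Parseval computation over the resulting genuine basis. The adjoined sections need not be continuous or bounded, but nothing requires that: for any $e\in L^2(\manifold,\tanbdl)$ you still have $ue\in L^1$, the quantity $(Te,e)=(Q(ue),ue)$ makes sense as an $L^\infty$--$L^1$ pairing, and your term-by-term identity $(QZ,Z)=\sum_i\lambda_i|(Z,\psi_i)|^2$ for $Z\in L^1$ applies verbatim; summing over the full basis, Tonelli and the now-legitimate Parseval give $\sum_i\lambda_i\ltwotan{u\psi_i}^2$, and the diagonal Mercer identity finishes. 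It is worth noting how the paper sidesteps the issue entirely: it also starts from Mercer, but truncates the \emph{kernel}, computes the traces of the finite-rank truncations $A_N$ directly, and passes to the limit using uniform convergence of the Mercer series on the integral side and Hilbert--Schmidt convergence plus monotonicity of $A_N$ on the trace side, so no orthonormal basis of the ambient space is ever invoked.
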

\begin{proof}
    By Mercer's theorem, $Q$ has the following decomposition $Q(x,y)=\sum_k\Psi_k(x)\otimes\Psi_k(y)$, where $\{\Psi_k\}_k$ is a sequence of continuous vector fields and the sum converge absolutely and uniformly on $\manifold\times\manifold$. Let $A_N$ be the operator with kernel $u(x)u(y)\sum_{k=1}^N\Psi_k(x)\otimes\Psi_k(y)$, then 
    \begin{equation}
        \tr A_N=\sum_{k=1}^N \int_\manifold u(x)^2|\Psi_k(x)|^2\der x=\int_\manifold u(x)^2\tr(\sum_{k=1}^N\Psi_k(x)\otimes\Psi_k(x))\der x.
\end{equation}
The RHS converges to $\int_\manifold u(x)^2 \tr Q(x,x)\der x$ as the sum $Q(x,y)=\sum_k\Psi_k(x)\otimes\Psi_k(y)$ converges uniformly, and $u\in L^2(\manifold)$. $A_N$ converges to $\multiplyfunc{u}Q\multiplyfunc{u}$ in Hilbert-Schmidt norm as the kernel converges in $L^2$, and  that $A_N$ is increasing, $\tr A_N$ converges to $\tr \multiplyfunc{u}Q\multiplyfunc{u}$. 
\end{proof}

We will also need some properties of the trace class operators that can be found in classical functional analysis textbooks, for example \cite{reed1980methods}. 
\begin{prop}
\label{prop trace(AB) < opnorm A trace B}
    Let A,B be positive semidefinite operators on $U$. If $A$ is a bounded operator and $B$ is a trace class operator,  then $\tr(BA)=\tr(AB)\leq |A| \tr B$.
    Let $C$ be an bounded operator from $U$ to $V$. Then $\tr(CBC^\ast)\leq|C|^2\tr B$.
\end{prop}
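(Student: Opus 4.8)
The plan is to reduce both statements to the positive semidefinite square root of $B$. Since $B$ is positive semidefinite and trace class, it admits a positive semidefinite square root $B^{1/2}$, which is Hilbert--Schmidt with $\hsnorm{B^{1/2}}^2=\tr B<\infty$. I will freely use two classical facts about the trace: it is independent of the chosen orthonormal basis, and it is cyclic, i.e.\ $\tr(XY)=\tr(YX)$ whenever one factor is trace class and the other is bounded (so that both products are trace class). I will also use that the trace is monotone on positive semidefinite operators: if $0\le S\le T$ in the operator order, then $\tr S\le \tr T$, which follows by summing $(Se_k,e_k)\le(Te_k,e_k)$ over any orthonormal basis $\{e_k\}$ of $U$.

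For the first statement, cyclicity (valid since $A$ is bounded and $B$ is trace class) gives $\tr(AB)=\tr(BA)$ directly, and writing $B=B^{1/2}B^{1/2}$ and applying cyclicity once more yields $\tr(AB)=\tr(B^{1/2}AB^{1/2})$. The operator $B^{1/2}AB^{1/2}$ is positive semidefinite, and for every $v\in U$,
\[
(B^{1/2}AB^{1/2}v,v)=(AB^{1/2}v,B^{1/2}v)\le |A|\,(B^{1/2}v,B^{1/2}v)=|A|\,(Bv,v),
\]
using that $A$ is positive with $(Aw,w)\le|A|\,(w,w)$ and that $B^{1/2}$ is self-adjoint. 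Hence $B^{1/2}AB^{1/2}\le|A|\,B$ in the operator order, and monotonicity of the trace gives $\tr(AB)=\tr(B^{1/2}AB^{1/2})\le|A|\tr B$.

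For the second statement I factor $CBC^\ast=(CB^{1/2})(CB^{1/2})^\ast$. Since $B^{1/2}$ is Hilbert--Schmidt and $C$ is bounded, $CB^{1/2}$ is Hilbert--Schmidt, so $CBC^\ast$ is trace class and
\[
\tr_V(CBC^\ast)=\tr_V\!\big((CB^{1/2})(CB^{1/2})^\ast\big)=\tr_U\!\big((CB^{1/2})^\ast(CB^{1/2})\big)=\tr_U(B^{1/2}C^\ast CB^{1/2}).
\]
Now $A:=C^\ast C$ is a positive semidefinite bounded operator on $U$ with $|C^\ast C|=|C|^2$, so applying the first statement with this $A$ gives $\tr_U(B^{1/2}C^\ast CB^{1/2})=\tr_U(C^\ast C B)\le|C|^2\tr B$, which is the claim.

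There is no serious obstacle here: the only points requiring care are the well-definedness and basis-independence of the trace for the trace-class products involved, together with the monotonicity of the trace under the operator ordering, both of which are standard. The cleanest route is the square-root factorization above, which turns both inequalities into a single application of the operator-order bound $B^{1/2}AB^{1/2}\le|A|B$.
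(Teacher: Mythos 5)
Your proof is correct. Note that the paper itself gives no argument for this proposition: it is stated as a standard fact with a pointer to classical functional analysis textbooks (Reed--Simon), so there is no in-paper proof to compare against. Your square-root factorization argument is exactly the classical textbook route, and it is carried out cleanly: you reduce $\tr(AB)$ to $\tr(B^{1/2}AB^{1/2})$ by cyclicity (valid since $AB^{1/2}$ and $B^{1/2}$ are Hilbert--Schmidt), establish the operator-order bound $B^{1/2}AB^{1/2}\le \|A\|\,B$ from positivity of $A$, and conclude by monotonicity of the trace on positive operators; the second inequality then follows from the first via the factorization $CBC^\ast=(CB^{1/2})(CB^{1/2})^\ast$, the identity $\tr_V(TT^\ast)=\tr_U(T^\ast T)$ for Hilbert--Schmidt $T:U\to V$, and the $C^\ast$-identity $\|C^\ast C\|=\|C\|^2$. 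All the auxiliary facts you invoke (existence of the positive square root, basis-independence and cyclicity of the trace, monotonicity under the operator order) are standard and are applied in their correct scope, so your argument is a complete and self-contained substitute for the citation the paper relies on.
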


\subsection{White noise on manifold}
Recall that a centered Gaussian process $\swhitenoise$ on the manifold is called a  (spatial) white noise if for any smooth function $\phi$,
\begin{equation}
\Expectation[ (\swhitenoise,\phi)^2]=\LtwoMnorm{\phi}^2.
\end{equation}
By polarization,  the covariance is given by
\begin{equation}
\Expectation[ (\swhitenoise,\phi) (\swhitenoise,\psi)]= (\phi,\psi)_{L^2 (\manifold)}, \:\forall\phi,\psi\in C^{\infty} (\manifold).
\end{equation}

The following integrability result follows directly from the Gaussian hypercontractivity.

\begin{prop} \label{prop-wn-moment}
	Let $\swhitenoise$ be a white noise on $\manifold$.
	For any smooth function $\phi$, $p\in [1,\infty)$, 
	\begin{equation}
	\Expectation | (\swhitenoise,\phi)|^p= C (p) \LtwoMnorm{\phi}^{p},
	\end{equation}
	for some constant $C (p)<\infty$.
\end{prop}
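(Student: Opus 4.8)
The plan is to reduce the statement to a purely one-dimensional computation of Gaussian absolute moments. The essential observation is that, by the definition of white noise recalled above, for each fixed smooth $\phi$ the real-valued random variable $(\swhitenoise,\phi)$ is centered Gaussian with variance $\Expectation[(\swhitenoise,\phi)^2]=\LtwoMnorm{\phi}^2$. Once this is in hand, the entire content of the proposition is the elementary scaling property of Gaussian moments, so the proof requires no input beyond the definition together with a standard moment computation.

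First I would dispose of the degenerate case $\phi=0$, in which $(\swhitenoise,\phi)=0$ almost surely and both sides vanish identically. Assuming $\phi\neq 0$, so that $\LtwoMnorm{\phi}>0$, I would normalize by setting $Z:=(\swhitenoise,\phi)/\LtwoMnorm{\phi}$; this $Z$ is a standard Gaussian whose law is independent of $\phi$. Positive homogeneity of $|\cdot|^p$ then gives $|(\swhitenoise,\phi)|^p=\LtwoMnorm{\phi}^p\,|Z|^p$, and taking expectations yields the claimed identity with $C(p):=\Expectation|Z|^p$.

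It then remains only to verify that this constant is finite for every $p\in[1,\infty)$, which is immediate since $C(p)=\tfrac{1}{\sqrt{2\pi}}\int_{\mathbb{R}}|z|^p e^{-z^2/2}\,\der z<\infty$ because the Gaussian density decays faster than any polynomial grows; explicitly $C(p)=\pi^{-1/2}2^{p/2}\Gamma\!\left(\tfrac{p+1}{2}\right)$. There is no genuine obstacle in this argument. The only subtlety worth flagging is that it is the \emph{Gaussianity} of $(\swhitenoise,\phi)$, rather than mere finiteness of its variance, that upgrades the hypercontractivity estimate $\Expectation|(\swhitenoise,\phi)|^p\lesssim_p\LtwoMnorm{\phi}^p$ to the exact equality with a universal constant stated in the proposition; the hypercontractivity remark in the text should therefore be read as a convenient way to see finiteness, with the precise value of $C(p)$ coming from the explicit scaling above.
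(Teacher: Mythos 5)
Your proof is correct. The paper itself offers no argument beyond the one-line remark that the proposition ``follows directly from the Gaussian hypercontractivity,'' so your direct computation is a genuinely different --- and in fact better-suited --- route. Hypercontractivity on the first Wiener chaos only yields the one-sided bound $\Expectation|(\swhitenoise,\phi)|^p \lesssim_p \LtwoMnorm{\phi}^p$, whereas the proposition as stated is an equality; your scaling argument, which uses only that $(\swhitenoise,\phi)$ is a centered Gaussian with variance $\LtwoMnorm{\phi}^2$ (immediate from the paper's definition of white noise), delivers exactly that equality together with the explicit constant $C(p)=\pi^{-1/2}2^{p/2}\Gamma\left(\tfrac{p+1}{2}\right)$, uniformly over $\phi$. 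Your separate treatment of the degenerate case $\phi=0$ and the finiteness check of the Gaussian absolute moment are both fine, and your closing remark correctly identifies the point of divergence: Gaussianity, not mere square-integrability, is what turns the hypercontractive inequality into an exact identity. What the paper's appeal to hypercontractivity buys is a principle that extends verbatim to higher chaoses (which it does need later, for the second-chaos moment bounds on $(\swhitenoise\boxtimes\swhitenoise,K)$), while your argument buys the precise statement actually claimed here at no extra cost.
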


There are several equivalence characterizations of the white noise. An important one is given by the characteristic function.
\begin{prop}\label{prop-char-white}
	A random process $\swhitenoise$ is  space white noise if and only if for all smooth function $\phi$ on $\manifold$,
	\begin{equation}
	\Expectation[\exp ({i (\swhitenoise,\phi)})]=\exp\left (-\frac{\LtwoMnorm{\phi}^2}{2}\right),
	\end{equation}
\end{prop}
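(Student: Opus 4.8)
The plan is to prove both implications by combining the uniqueness of characteristic functions with the Cram\'er--Wold device, exploiting the linearity of the pairing $\phi \mapsto (\swhitenoise,\phi)$. The forward implication is immediate from the definition, whereas the converse requires recovering from the single scalar identity both the \emph{joint} Gaussianity of $\swhitenoise$ and the precise form of its covariance.

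For the forward direction, suppose $\swhitenoise$ is a white noise. By definition it is a centered Gaussian process, so $(\swhitenoise,\phi)$ is a real centered Gaussian random variable, and by the defining relation its variance equals $\Expectation[(\swhitenoise,\phi)^2]=\LtwoMnorm{\phi}^2$. Since the characteristic function of a centered Gaussian of variance $\sigma^2$ evaluated at frequency $1$ is $\exp(-\sigma^2/2)$, we obtain $\Expectation[\exp(i(\swhitenoise,\phi))]=\exp(-\LtwoMnorm{\phi}^2/2)$, as claimed.

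For the converse, I would first apply the hypothesis with $\phi$ replaced by $t\phi$ for $t\in\mathbb{R}$, using the linearity $(\swhitenoise,t\phi)=t(\swhitenoise,\phi)$, to get
\[
\Expectation[\exp(it(\swhitenoise,\phi))]=\exp\!\left(-\tfrac{t^2}{2}\LtwoMnorm{\phi}^2\right),\qquad t\in\mathbb{R}.
\]
This is exactly the characteristic function of a centered Gaussian random variable with variance $\LtwoMnorm{\phi}^2$, so by uniqueness of characteristic functions $(\swhitenoise,\phi)$ is centered Gaussian with that variance; in particular the defining variance relation holds. To upgrade this to the statement that $\swhitenoise$ is a centered Gaussian process, I would use linearity in the test function: for any finite family $\phi_1,\dots,\phi_n$ of smooth functions and any reals $a_1,\dots,a_n$, the combination $\sum_k a_k(\swhitenoise,\phi_k)=(\swhitenoise,\sum_k a_k\phi_k)$ pairs $\swhitenoise$ against the smooth function $\sum_k a_k\phi_k$, hence is a one-dimensional centered Gaussian by the argument just given. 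By the Cram\'er--Wold theorem this forces the joint law of $((\swhitenoise,\phi_1),\dots,(\swhitenoise,\phi_n))$ to be multivariate centered Gaussian, so $\swhitenoise$ is a centered Gaussian process with the correct second moments, i.e.\ a white noise.

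The only delicate point is this last step: the characteristic-function hypothesis a priori controls only the one-dimensional marginals $(\swhitenoise,\phi)$, and one must rule out that the joint distribution fails to be Gaussian. The linearity of $\phi\mapsto(\swhitenoise,\phi)$ is exactly what lets the single-variable identity propagate to all finite-dimensional distributions via Cram\'er--Wold, and this is where the structure of the test-function pairing is essential rather than incidental.
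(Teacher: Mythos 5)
Your proposal is correct and complete. The paper in fact states Proposition \ref{prop-char-white} without any proof, treating it as a standard characterization of Gaussian fields, so there is no in-paper argument to compare against; what you have written is precisely the standard argument one would supply. Both halves are handled properly: the forward direction is the elementary computation of a Gaussian characteristic function, and in the converse you correctly identify the one non-trivial point --- that the hypothesis only controls one-dimensional marginals --- and resolve it the right way, by noting that $\sum_k a_k(\swhitenoise,\phi_k)=(\swhitenoise,\sum_k a_k\phi_k)$ is again a pairing against a smooth test function, hence univariate Gaussian, so that Cram\'er--Wold (equivalently, the characterization of multivariate Gaussians via linear functionals) yields joint Gaussianity of all finite-dimensional distributions, with the covariance $(\phi,\psi)_{L^2(\manifold)}$ recovered by polarization.
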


Another way to characterize the white noise is to look at its decomposition over an orthonormal basis of $L^2$, in particular the spectral decomposition of the Laplacian, with components being i.i.d. standard Gaussian. This is also why the noise is called ``white''. 

\begin{prop}
\label{prop regularit of wn}
	The space white noise $\swhitenoise$ on $\manifold$ lies almost surely in  $H^{-\frac{d}{2}-} (\manifold)$ $:=$ $ \cap_{\varepsilon>0}H^{-\frac{d}{2}-\varepsilon} (\manifold)$,
	and the $p$-th moment of its $H^{-d/2-\varepsilon}$ norm is finite. For any $\varepsilon>0, p \in [1,\infty)$,
	\begin{equation}
	\Expectation\left[\hnormm{\swhitenoise}{-\frac{d}{2}-\varepsilon}^p\right] < \infty.
	\end{equation}
\end{prop}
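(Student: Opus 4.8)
The plan is to establish the $H^{-d/2-\varepsilon}$ integrability of the white noise by expanding it in the eigenbasis of the Laplacian and reducing the moment bound to the finiteness of a trace computed in Proposition \ref{prop: finite trace}. First I would fix $\varepsilon>0$ and $p\in[1,\infty)$. Let $\{\phi_j\}$ be the orthonormal basis of $L^2(\manifold)$ consisting of Laplace eigenfunctions, with $-\Delta\phi_j=\lambda_j\phi_j$, which exists by the elliptic theory recalled in Section 2. The defining covariance of $\swhitenoise$ shows that the coefficients $g_j:=(\swhitenoise,\phi_j)$ are i.i.d.\ standard Gaussians, and the definition \eqref{} of the Sobolev norm via the Bessel potential gives the representation
\begin{equation*}
\hnormm{\swhitenoise}{-\frac{d}{2}-\varepsilon}^2=\sum_j (1+\lambda_j)^{-\frac{d}{2}-\varepsilon}\,g_j^2.
\end{equation*}

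The key observation is that the expected value of this series is exactly a trace. Taking expectations and using $\Expectation[g_j^2]=1$,
\begin{equation*}
\Expectation\left[\hnormm{\swhitenoise}{-\frac{d}{2}-\varepsilon}^2\right]=\sum_j (1+\lambda_j)^{-\frac{d}{2}-\varepsilon}=\tr\!\left((1-\Delta)^{-\frac{d}{2}-\varepsilon}\right),
\end{equation*}
which is finite by Proposition \ref{prop: finite trace} applied with $\alpha=-\frac{d}{2}-\varepsilon<-d/2$. This already shows that $\swhitenoise$ lies almost surely in $H^{-d/2-\varepsilon}(\manifold)$ for every $\varepsilon>0$, and hence in the intersection $H^{-d/2-}(\manifold)$, since a countable intersection over a sequence $\varepsilon_n\downarrow 0$ suffices.

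For the $p$-th moment bound with general $p$, I would invoke Gaussian hypercontractivity (equivalently, Proposition \ref{prop-wn-moment} or the equivalence of Gaussian moments). The quantity $\hnormm{\swhitenoise}{-\frac{d}{2}-\varepsilon}^2$ is the $L^2(\prob)$-limit of the quadratic forms $\sum_{j\le M}(1+\lambda_j)^{-d/2-\varepsilon}g_j^2$, each of which lives in the second Wiener chaos, and the full sum is a nonnegative element of the closure of the sum of the zeroth and second chaoses. On any fixed Wiener chaos all $L^q(\prob)$ norms are equivalent, so for $p\ge 2$ one controls $\Expectation[\hnormm{\swhitenoise}{-d/2-\varepsilon}^p]$ by a constant times $(\Expectation[\hnormm{\swhitenoise}{-d/2-\varepsilon}^2])^{p/2}$, which is finite by the previous step; the case $p<2$ follows by Jensen/Hölder from the $p=2$ case. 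The main obstacle, such as it is, is purely bookkeeping: one must justify that the random series for the norm converges in $L^2(\prob)$ (equivalently a.s., by the three-series / monotone convergence theorem for the nonnegative terms $(1+\lambda_j)^{-d/2-\varepsilon}g_j^2$) before interchanging expectation and summation, and that the chaos-norm equivalence applies to the limiting random variable rather than only to the finite truncations. Both are handled by monotone convergence for the nonnegative partial sums together with the uniform-in-$M$ second-moment bound supplied by the finite trace, so no delicate estimate beyond Proposition \ref{prop: finite trace} is actually required.
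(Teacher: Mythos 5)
Your proof is correct, and it rests on the same key input as the paper's: the finiteness of $\tr\left((1-\Delta)^{\alpha}\right)$ for $\alpha<-d/2$ from Proposition \ref{prop: finite trace}. The paper's own proof is a two-line citation: it regards $\swhitenoise$ as a centered Gaussian variable on $U=H^{-d/2-\varepsilon}(\manifold)$ whose covariance, composed with the canonical isometry $\mathcal{J}=(1-\Delta)^{-d/2-\varepsilon}$ as explained in Remark \ref{rmk quadratic natural}, is trace class, and then invokes Proposition \ref{prop Q Wiener existence} (i.e.\ Propositions 2.18--2.19 of \cite{da2014stochastic}) to obtain simultaneously that the noise lives in that space and that $\Expectation|X|^p\lesssim_p(\tr_U Q)^{p/2}$. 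What you do differently is to unpack that abstract proposition in this special case: you diagonalize in the Laplace eigenbasis, identify the coefficients $(\swhitenoise,\phi_j)$ as i.i.d.\ standard Gaussians, compute $\Expectation\hnormm{\swhitenoise}{-d/2-\varepsilon}^{2}$ as exactly the trace, and upgrade to general $p$ via Wiener-chaos moment equivalence (hypercontractivity) together with Fatou, and via Jensen for $p<2$. The trade-off is clear: the paper's route is shorter and delegates all measure-theoretic care to the cited reference, while yours is self-contained, makes the role of the eigenvalue asymptotics transparent, and effectively reproves the needed half of Proposition \ref{prop Q Wiener existence}; the bookkeeping points you flag --- almost sure convergence of the nonnegative norm series by monotone convergence, and passing the uniform-in-$M$ chaos bound to the limit --- are exactly the right ones and are handled correctly.
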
 

\begin{proof}
	It is a consequence of Proposition \ref{prop: finite trace} and Proposition \ref{prop Q Wiener existence}.
\end{proof}

The law of white noise on $\manifold$ defines a measure on $H^{-\frac{d}{2}-} (\manifold)$, which we denote by $\mu$.

We will also need the following proposition that follows from standard Wiener chaos results.

\begin{prop}\label{prop-est-wn-quadratic}
	Suppose $\eta$ is a white noise, and $K (x,y)$ is a symmetric ($K (x,y)=K (y,x)$) smooth function on $\manifold\times\manifold$. Then we define $ (\eta\boxtimes\eta, K)$, also formally written as $\int_{\manifold\times\manifold} K (x,y)\eta (x)\eta (y)\der x \der y$, to be
	$ (\eta,K\eta)$,
	where by abuse of notation $K$ denotes the operator defined by $ (Kf) (x)=\int_{\manifold} K (x,y)f (y)\der y$.
    Then $(\eta\boxtimes\eta,K)=I_2(K)+\tr K=I_2(K)+\int_{\manifold}K(x,x)\der x$, where $I_2(K)$ is the multiple stochastic integral defined in \cite{nualart2006analysis}, Section 1.1.2.
	
	Moreover, the expectation of $ (\eta\boxtimes\eta, K)$ is given by the integral of $K$ on the diagonal, which is also the trace (in $L^2$) of the operator $K$
	\begin{equation}
	\Expectation\left[\int_{\manifold\times\manifold} K (x,y)\eta (x)\eta (y)\der x \der y\right]=\int_{\manifold} K (x,x) \der x = \tr K.
	\end{equation}
	
	The variace of $ (\eta\boxtimes\eta, K)$ is given by two times the square of the Hilbert-Schmidt norm of $K$, namely
	\begin{equation} \label{eq 2nd chaos var}
	\Var\left[  (\eta\boxtimes\eta, K) \right]=2 \int_{\manifold\times\manifold} K (x,y)^2 \der x \der y = 2\hsnorm{K}^2
	\end{equation}
	As $ (\eta\boxtimes\eta, K)-\tr K$ lies in second order Wiener chaos, for $p>2$,  the $p$-th moment can also be controlled:
	\begin{equation}\label{moment second chaos}
	\Expectation\left| (\eta\boxtimes\eta, K)-\tr K \right|^p \leq C_p \hsnorm{K}^p.
	\end{equation}
	
	Moreover, if $K$ is positive semidefinite, then for any $p\in[2,\infty)$ we can bound the $p$-th moment by the expectation, i.e. we can find constant $C_p$, such that 
	\begin{equation}\label{moment second integral}
	\Expectation\left[ (\eta\boxtimes\eta, K)^p\right] \leq C_p  (\tr K)^p.
	\end{equation}
	
\end{prop}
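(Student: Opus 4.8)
The plan is to reduce every assertion to a computation in a fixed Wiener chaos by expanding both $\eta$ and the kernel $K$ over an orthonormal basis of $L^2(\manifold)$. Fix an orthonormal basis $\{\phi_j\}$ of $L^2(\manifold)$ (for definiteness the eigenbasis of $-\Delta$) and set $\xi_j=(\eta,\phi_j)$. By the defining covariance of white noise the $\xi_j$ are i.i.d.\ standard Gaussians, so $\eta$ is realized as the isonormal Gaussian process indexed by $L^2(\manifold)$. Since $K$ is smooth on the compact manifold $\manifold\times\manifold$ it is in particular Hilbert--Schmidt, so it expands as $K(x,y)=\sum_{i,j}K_{ij}\phi_i(x)\phi_j(y)$ with $K_{ij}=(\phi_i,K\phi_j)=K_{ji}$ and $\sum_{i,j}K_{ij}^2=\hsnorm{K}^2<\infty$. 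Because a smooth kernel maps any negative Sobolev space into smooth functions, $(\eta,K\eta)$ is a well-defined duality pairing and equals the $L^2(\mathbb{P})$-convergent sum $\sum_{i,j}K_{ij}\xi_i\xi_j$.

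First I would establish the chaos decomposition $(\eta\boxtimes\eta,K)=I_2(K)+\tr K$. Applying the product formula for multiple integrals (\cite{nualart2006analysis}), $\xi_i\xi_j=I_1(\phi_i)I_1(\phi_j)=I_2(\phi_i\widetilde\otimes\phi_j)+\delta_{ij}$; summing against $K_{ij}$ and using that $K$ is symmetric (so its symmetrization is itself) yields $(\eta,K\eta)=I_2(K)+\sum_i K_{ii}$. The diagonal sum $\sum_i K_{ii}$ equals $\int_\manifold K(x,x)\der x=\tr K$, which for the smooth symmetric kernel $K$ follows exactly as in the proof of Proposition \ref{prop trace uqu} (the Mercer-type identification of the trace with the integral of the kernel on the diagonal). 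Since every multiple integral of positive order is centered, taking expectations gives $\Expectation[(\eta\boxtimes\eta,K)]=\tr K$; and because $\tr K$ is deterministic, the variance is $\Var[(\eta\boxtimes\eta,K)]=\Expectation[I_2(K)^2]=2\,\hsnorm{K}^2$ by the $L^2$-isometry $\Expectation[I_2(K)^2]=2!\,\|K\|_{L^2(\manifold\times\manifold)}^2$ for symmetric kernels.

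For the higher moments, the key point is that $(\eta\boxtimes\eta,K)-\tr K=I_2(K)$ lives in the second Wiener chaos, on which all $L^p(\mathbb{P})$ norms are equivalent by Gaussian hypercontractivity (\cite{nualart2006analysis}); hence $\Expectation|I_2(K)|^p\le C_p(\Expectation[I_2(K)^2])^{p/2}=C_p'\hsnorm{K}^p$, which is (\ref{moment second chaos}). For the positive semidefinite case I would diagonalize $K$ by Mercer's theorem, $K=\sum_k\lambda_k\,\psi_k\otimes\psi_k$ with $\lambda_k\ge0$, so that $\tr K=\sum_k\lambda_k$ and $\hsnorm{K}^2=\sum_k\lambda_k^2\le(\sum_k\lambda_k)^2=(\tr K)^2$, giving the crucial bound $\hsnorm{K}\le\tr K$. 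Combining the triangle inequality in $L^p(\mathbb{P})$ with the second-chaos estimate then yields $\lpnormprob{(\eta\boxtimes\eta,K)}{p}\le \lpnormprob{I_2(K)}{p}+\tr K\le C_p\hsnorm{K}+\tr K\le (C_p+1)\tr K$, and raising to the $p$-th power gives (\ref{moment second integral}).

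The computations are all routine once the chaos framework is in place; the only points requiring care are the justification that $\sum_i K_{ii}=\int_\manifold K(x,x)\der x$ for a not-necessarily-positive smooth symmetric kernel (handled by the Mercer argument, or by noting that a smooth kernel on a compact manifold defines a trace class operator whose trace is the integral on the diagonal) and the $L^2(\mathbb{P})$-convergence of the bilinear sum $\sum_{i,j}K_{ij}\xi_i\xi_j$ defining $(\eta,K\eta)$, which I would control using $\hsnorm{K}<\infty$. The reduction $\hsnorm{K}\le\tr K$ for positive semidefinite $K$ is the conceptual heart of (\ref{moment second integral}), as it is precisely what converts the Hilbert--Schmidt control of the second chaos into a bound in terms of the trace.
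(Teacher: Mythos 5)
Your proof is correct, and it rests on the same chaos-theoretic toolkit as the paper's: Nualart's product formula for first-order integrals, the $L^2$ isometry on the second chaos, hypercontractivity for the $p$-th moments, and the bound $\hsnorm{K}\le\tr K$ in the positive semidefinite case. The organization of the reduction is genuinely different, though. The paper never expands in a full orthonormal basis: it first checks the identity for rank-one kernels $K=\phi\boxtimes\phi$, where it reads $(\eta,\phi)^2=I_2(\phi\boxtimes\phi)+(\phi,\phi)$, and then writes a general smooth symmetric $K$ as a sum of such rank-one kernels, $K=\sum_k\phi_k\boxtimes\phi_k$, obtained by diagonalizing the conjugated Hilbert--Schmidt operator $(1-\Delta)^{\alpha}K(1-\Delta)^{\alpha}$; the point of that construction is that the sum converges in $H^\alpha(\manifold)\otimes H^\alpha(\manifold)$ with $\alpha>d/2$, i.e.\ in the topology dual to the a.s.\ regularity of $\eta$, so the pairing $(\eta\boxtimes\eta,\cdot)$ passes to the limit pathwise and no probabilistic convergence argument is needed. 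Your route --- the double expansion $\sum_{i,j}K_{ij}\xi_i\xi_j$ with the product formula applied to mixed terms $\xi_i\xi_j$ --- is more elementary and avoids the spectral trick, but it shifts the burden onto stochastic bookkeeping, and that is the one place your write-up is thin: $\hsnorm{K}<\infty$ alone controls the off-diagonal part and the centered diagonal part of the sum, but the convergence of the mean part $\sum_i K_{ii}$, and the identification of the $L^2(\prob)$ limit with the duality pairing $(\eta,K\eta)$, require the trace-class property of the operator with smooth kernel (or a deterministic convergence argument of the paper's type). Since you do invoke trace-classness for the identity $\sum_i K_{ii}=\int_\manifold K(x,x)\der x$, all the needed ingredients are present in your proposal; they just need to be wired explicitly into the convergence step rather than attributed to the Hilbert--Schmidt bound alone.
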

 \begin{proof}
     We will prove $(\eta\boxtimes\eta,K)=I_2(K)+\tr K$, and (\ref{eq 2nd chaos var}), (\ref{moment second chaos}) follow from \cite{nualart2006analysis} (Section 1.1.2 and 1.1.4). To get  (\ref{moment second integral}), we also use the fact that $\hsnorm{K}\leq \tr K$ for positive semidefinite $K$.
     
     When $K(x,y)=\phi(x)\phi(y)$ for some  $\phi\in L^2(\manifold)$, by Proposition 1.1.2 in \cite{nualart2006analysis}, $(\eta\boxtimes\eta,K)=(\eta,\phi)^2=I_2(\phi\boxtimes\phi)+(\phi,\phi)=I_2(K)+\tr K$. 
     Now we just need to show $K$ has a decomposition $K=\sum_k\phi_k\boxtimes\phi_k$, where the sum converges in $H^\alpha (\manifold)\otimes H^\alpha(\manifold)$ for some $\alpha>d/2$ as $\eta\in H^{-d/2-}(\manifold)$.  $(1-\Delta)^\alpha K(1-\Delta)^\alpha$ has a smooth kernel and symmetric,  and therefore it is Hilbert-Schmidt. We have $(1-\Delta)^\alpha_x (1-\Delta)^\alpha_y K(x,y)=\sum_{k}\varphi_k \boxtimes\varphi_k$, where the sum converges in $L^2(\manifold)\otimes L^2(\manifold)$. Let $\phi_k=(1-\Delta)^{-\alpha}\phi_k$. Then $K=\sum_k\phi_k\boxtimes\phi_k$ and the sum converges in $H^\alpha(\manifold)\otimes H^\alpha(\manifold)$.
 \end{proof}

\section{Stochastic transport equations}

In this section, we will discuss the well-posedness and regularity of  stochastic transport equations. We will also prove some priori estimates for $L^2$ solutions and white noise solutions. As well-posedness is not our main focus, we will assume the driving noise is smooth in space in order to simplify our proofs. The well-posedness of stochastic transport equations with regular enough initial data  has been well established in \cite{bismut1982mecanique} and also in \cite{kunita1986lectures,kunita1990stochastic} but we will also need to deal with white noise solutions that are in $H^{-d/2-}$.
\subsection{Formulations of  stochastic transport equations}
In this subsection, we discuss the formulations of the stochastic transport equations.  

We fix a complete filtered probability space $ (\Omega,\mathbb{P},\mathcal{F}, (\mathcal{F}_t))$. We assume the filtration to be right continuous for convenience. All stochastic processes are assumed to be adapted with respect to this filtration unless otherwise stated. The solutions discussed in this section are all strong solutions in the probability sense as they are adapted to the natural filtration generated by initial data and the noise, but for the scaling limits results, we only probabilistic weak existence. We will also need some regularity estimates of the stochastic transport equations.

\subsubsection{Assumptions on the noises}\label{noise-def}
Now we introduce some basic assumptions on the noises.
Let $Q$ be in $C^\infty (\manifold\times\manifold,\tanbdl\boxtimes\tanbdl)$, i.e. $Q$ is a smooth and $Q (x,y)$ takes value in $T_x\manifold\otimes T_y\manifold$ (the tensor product of tangent vector spaces at $x$ and at $y$). With an abuse of notation, we also use $Q$ to denote the operator on tangent vectors whose integral kernel is $Q$, i.e. $ (QX)^i (x)= \int_{\manifold}Q^i_j (x,y) X^j (y)\der y$. We assume $Q$ is positive semidefinite. We assume $W$  to be a $(\mathcal{F}_t)$-adapted $Q$-Wiener process. Note that for all $\alpha\in\mathbb{R}$, $\tr_{H^\alpha}(Q)=\tr((1-\Delta)^\alpha Q)=\int_{\manifold}\tr((1-\Delta)_x^\alpha Q(x,y)|_{y=x})\der x<\infty$. Therefore, $W$ is a $Q$-Wiener process in $H^\alpha$ for all $\alpha$. By Sobolev embedding, $W$ is smooth in space.

We assume $W$ is a divergence-free process, which is equivalent to $(\nabla\cdot)\circ Q\circ\nabla=0$, or $\partial_{x_i}\partial_{y_j} Q^{ij}(x,y)=0, \forall x,y \in \manifold$. It is also equivalent to $(\nabla \cdot) Q=0$, or $\partial_{x_i}Q^{ij}(x,y)=0, \forall x,y\in\manifold$. ($(\nabla\cdot)\circ Q\circ\nabla=-((\nabla \cdot) \circ Q^{1/2})((\nabla \cdot) \circ Q^{1/2})^\ast$, so $(\nabla\cdot)\circ Q\circ\nabla=0 \Leftrightarrow(\nabla \cdot)\circ Q^{1/2})=0\Rightarrow (\nabla\cdot)\circ Q=0$  )

When $Q=(\leray \Theta(-\Delta))(\leray\Theta(-\Delta))^\ast=\leray\Theta^2(-\Delta)$ and $\Theta(\lambda)\lesssim e^{-C\lambda}$ for some $C>0$, all the above conditions are satisfied.

The covariation of $ (W,X), (W,Y)$, where $X,Y$ are smooth deterministic vector fields, is given by $\left<  (W,X),  (W,Y) \right>_t= (X,QY)t$.

We denote the correlation on the diagonal as $A (x) = Q (x,x)$.

    \begin{align*}
    \partial_i A^{ij}(x)=&\partial_{x_i }Q^{ij}(x,y)+\partial_{y_i} Q^{ij}(x,y)|_{y=x}
    \\=&\partial_{x_i }Q^{ij}(x,y)+\partial_{x_i} Q^{ji}(x,y)|_{y=x}
    \\=&2\partial_{x_i }Q^{ij}(x,y)|_{y=x}
    \\=&0.
\end{align*}

In addition, we assume that the $A\geq C \identity$ for some $C>0$.

Then the operator $Lu=\divergence( A \gradient u)$ is a uniform elliptic second order differential operator. In coordinates $Lu=\partial_i( A^{ij} \partial_j u)=A^{ij}\partial_i\partial_j u$.

The additional uniform ellipticity above is not automatically satisfied on general manifolds when $Q=\leray
\Theta^2(-\Delta)$, but once it is satisfied for $Q=\leray \Theta^2(-\Delta)$, for any $\tilde{\Theta}>\Theta$, $\tilde Q=\leray \tilde \Theta^2(-\Delta)$ also satisfies the uniformly elliptic condition. If $Q^{(N)}$ undergoes a proper scaling (see Definition \ref{def proper scaling}), then by Condition \eqref{condition-diagonal}, $A^{(N)}\geq \frac{1}{2}\identity$, for all sufficiently large $N$.

\subsubsection{Classical solutions}
First, let us have a look at the classical point-wise solutions.
\begin{defn}[Classical Solutions]
	A predictable process $u_t (x)\in C ([0,T]\times\manifold)$ is called a classical solution to the stochastic transport equation if for any $x\in\manifold$, $u (x)$ and the first order derivatives of $u$ at $x$ are semimartingales and the following equation holds in any local coordinate system:
	\begin{equation}
	\der u_t (x) = \partial_i u_t (x) \circ \der W_t^i (x),
	\end{equation}
	where the RHS can also be written in a coordinate-free way as $\circ \der W_t (x) \cdot \gradient u_t (x)$.
\end{defn}

For a given smooth initial value $u_0$ there is a solution given by $u_t (x)=u_0 (X_t^{-1} (x))$, where $X_t (x)$ is the stochastic flow given the following Cauchy problem 
\begin{equation}\label{eq-stoch-flow}
\begin{cases}
\der X_t (x)=-\circ \der W_t (X_t (x)) 
\\ X_0 (x)=x
\end{cases},
\end{equation}

and by \cite{kunita1990stochastic}, $X^{-1}_t$ and $X_t$ are smooth diffeomorphisms and their spatial derivatives of all orders are continuous in $t$.
Sometimes it is more convenient to write the equation in It\^{o} form. By \cite{kunita1990stochastic}, the It\^{o}-Stratonovich corrector is 
\begin{equation}
\frac{1}{2} \sum_{k=1}^{\infty}  A^{ij}\partial_i\partial_ju=\frac{1}{2}Lu,
\end{equation} 
and the It\^o form of the equation is
\begin{equation}\label{eq: ito form}
    \der u= \nabla u \cdot \der W+\frac{1}{2}Lu.
\end{equation}
There is another derivation in \cite{flandoli2022eddy} using a decomposition of the noise.

\subsubsection{Solutions with irregular initial data}
In addition to smooth initial data, we also want to treat the equation when the initial value is not regular enough. One example is the white noise solution which is in $H^{-d/2-} (\manifold)$ (Proposition \ref{prop regularit of wn}). Therefore, we need a weak formulation in analytic sense of the stochastic transport equation or define it as a stochastic differential equation in a Sobolev space with negative regularity.  We use the It\^o form as it is easier to deal with.

We formulate the equation in terms of stochastic differential equation in the Sobolev space $H^{\alpha-2}$. 

\begin{defn}
    Suppose $\alpha\in \mathbb{R}$ and $T>0.$ We call $u$ a solution (in $H^\alpha(\manifold)$) to the stochastic transport equation if it is $\mathcal{F}_t$-adapted and  $u\in C([0,T],H^\alpha(\manifold))$, such that
\begin{equation}
    u_t=u_0+\int_0^t \nabla u_s \cdot \der W_s +\frac{1}{2}\int _0^t Lu_s\der s,
\end{equation}
for all $t\in[0,T]$.
\end{defn}

The multiplication operator $\nabla u_s\cdot $, which we also denote by $\multiplyfunc{\nabla u_s\cdot}$ to avoid ambiguity, is a bounded operator from $H^{\beta}(\manifold,\tanbdl)$ to $H^{\alpha-1}(\manifold)$ defined by $X \mapsto\nabla u_s \cdot X$, for $\beta>|\alpha -1|+d/2$. By \cite{bahouri2011fourier}, $\hnormm{X \cdot Y}{-\alpha-1}\lesssim\|X\|_{H^{-\alpha-1} }\|Y\|_{H^{\beta}}$ for smooth vector fields $X,Y$, and then the multiplication operator $\multiplyfunc{\nabla u_s\cdot}$ can be extended to a bounded operator from $H^{\beta} $ to $H^{\alpha-1}$.

The $L_0^2$ norm of the operator is bounded by $\|\nabla u_s\|_{H^{\alpha-1}}|\tr_{H^{\beta}}(Q)|^{1/2}\lesssim\|u_s\|_{H^\alpha}$. By continuity of $u$ in $H^\alpha$, $\int_{0}^t \tr_{H^{\alpha-1}}(\multiplyfunc{\nabla u_s\cdot}Q\multiplyfunc{\nabla u_s\cdot })\lesssim \int_0^t\hnormm{u_s}{\alpha}^2<\infty.$ Therefore, the stochastic integral on the RHS is well defined in $H^{\alpha-1}\subset H^{\alpha-2}$. The drift term is also well defined in $H^{\alpha-2}$ as $L$ is a bounded operator from $H^\alpha$ to $H^{\alpha-2}$.

Since $\nabla \cdot W=0$, $\nabla u_s \cdot \der W_s=\nabla \cdot (u_s\der W_s)$. By testing against smooth functions, the above definition is equivalent to the following weak formulation:

\begin{prop}\label{defn-weak-ito}
	 Let $\alpha\in\mathbb{R}$, $[0,T]$ be an interval.   Suppose $\alpha\in \mathbb{R}$, $T>0$ and  $u\in C([0,T],H^\alpha(\manifold))$ is an $\mathcal{F}_t$-adapted process. $u_t$ is  a  solution to the stochastic transport equation with the driving noise $W_t$ if and only if for any smooth function $\phi \in C^\infty (\manifold)$, the process $ (u_t,\phi)$ satisfies the following:
	 \begin{align}
	  (u_t,\phi)- (u_0,\phi)=&-\int_{0}^{t}  (u_s, \gradient \phi\cdot\der W_s )+\int_0^t (u_s,\frac{1}{2}L\phi)\der s.
	 \end{align}
\end{prop}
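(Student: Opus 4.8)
The plan is to prove the equivalence by testing the $H^{\alpha-2}$-valued identity from the definition against smooth functions and transferring every differential operator off $u$ and onto $\phi$ by duality. Both directions rest on the same algebraic computation: the forward implication needs the commutation of the bounded functional $(\cdot,\phi)$ with the Bochner and stochastic integrals, and the converse needs a density argument. For the forward direction I would fix $\phi\in C^\infty(\manifold)$ and apply the functional $(\cdot,\phi)$ — which is bounded on $H^{\alpha-1}$ (and on $H^{\alpha-2}$) since $\phi$ lies in every Sobolev space — to both sides of
\[
u_t = u_0 + \int_0^t \nabla u_s\cdot\der W_s + \frac12\int_0^t Lu_s\,\der s.
\]
Bounded linear functionals commute with Bochner integrals, and, by the construction of the stochastic integral from simple processes in \cite{da2014stochastic}, with the $H^{\alpha-1}$-valued stochastic integral; thus $(\cdot,\phi)$ passes inside both integrals, turning the stochastic term into a real-valued stochastic integral whose scalar integrand is $v\mapsto(\nabla u_s\cdot v,\phi)$.

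Next I would move the operators onto $\phi$. For the drift, the symmetry of $L=\divergence(A\nabla)$ — obtained by integrating by parts twice and using the symmetry of $A$, cf.\ item~5 of the differential geometry proposition — gives $(Lu_s,\phi)=(u_s,L\phi)$, with $L\phi$ smooth, so $\frac12\int_0^t(Lu_s,\phi)\der s=\frac12\int_0^t(u_s,L\phi)\der s$. For the stochastic term I would first invoke the divergence-free reformulation $\nabla u_s\cdot\der W_s=\divergence(u_s\der W_s)$ recorded before the statement, so that integration by parts (item~4) is clean: $(\divergence(u_s v),\phi)=-(u_s v,\nabla\phi)=-(u_s,\nabla\phi\cdot v)$, with no leftover term. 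Hence the stochastic integrand equals $-(u_s,\nabla\phi\cdot\,\cdot\,)$ and the whole term becomes $-\int_0^t(u_s,\nabla\phi\cdot\der W_s)$, which is exactly the right-hand side of the weak formulation.

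For the converse I would assume the weak identity holds for every smooth $\phi$ and consider the $H^{\alpha-2}$-valued process $Z_t:=u_t-u_0-\int_0^t\nabla u_s\cdot\der W_s-\frac12\int_0^t Lu_s\,\der s$. The computation above shows $(Z_t,\phi)=0$ for each smooth $\phi$. Choosing a countable set $\{\phi_n\}\subset C^\infty(\manifold)$ dense in $H^{2-\alpha}(\manifold)=(H^{\alpha-2}(\manifold))^\ast$ and intersecting the corresponding null sets, I would get $(Z_t,\phi_n)=0$ for all $n$ and all $t$ outside a single null set; since $Z$ is continuous in $H^{\alpha-2}$ (the solution is continuous in $H^\alpha$, the Bochner integral is continuous, and the stochastic integral admits a continuous version) and the $\phi_n$ are dense in the dual, this forces $Z_t=0$ for all $t$ almost surely, which is the defining identity.

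The main obstacle I anticipate is the clean bookkeeping of the integration by parts for the stochastic term. Expanding naively as $\nabla u_s\cdot v=\divergence(u_s v)-u_s\,\divergence v$ produces an extra contribution $-(u_s\phi,\divergence v)$, and one must argue that it vanishes — either directly through the divergence-free reformulation $\nabla u_s\cdot\der W_s=\divergence(u_s\der W_s)$, or by checking that the corresponding real stochastic integral has vanishing quadratic variation because $(\nabla f,Q\nabla f)=-(f,(\divergence)\circ Q\circ\nabla f)=0$ under the standing assumption $(\divergence)\circ Q\circ\nabla=0$. Starting from the reformulation bypasses this detour entirely, so that is the route I would follow; everything else reduces to the symmetry of $L$, integration by parts on $\manifold$, and the standard commutation properties of the stochastic integral.
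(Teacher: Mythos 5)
Your proposal is correct and takes essentially the same approach as the paper: the paper disposes of this proposition in one line, observing that since $\nabla\cdot W=0$ one has $\nabla u_s\cdot\der W_s=\nabla\cdot(u_s\,\der W_s)$ and that the equivalence then follows ``by testing against smooth functions,'' which is precisely your argument with the details (commutation of $(\cdot,\phi)$ with the Bochner and stochastic integrals, symmetry of $L$, and the density-plus-continuity argument for the converse) written out. The obstacle you flag — the potential leftover term $-(u_s\phi,\divergence v)$ — is resolved exactly as the paper resolves it, by starting from the divergence-free reformulation.
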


Later we also need to use the explicit formula for the quadratic variation of $ (u,\phi)$:
\begin{equation}
\der \quavar{ (u,\phi)}_t=  (u_t\nabla\phi,Q (u_t\nabla\phi))\der t,
\end{equation}

And with the symmetry of $Q$, we get that for any functions $\phi,\varphi\in C^\infty (\manifold)$,
the covariation of the two processes $ (u,\phi)$ and $ (u,\varphi)$ is given by
\begin{equation}\label{quav-test}
\der \quavar{ (u,\phi), (u,\varphi)}_t= (u_t\nabla\phi, Q  (u_t \nabla \varphi))\der t.
\end{equation}

Now we introduce the following mild formulation of the stochastic transport equation. 
For $0\leq s<t$,
\begin{equation}\label{eq-mild}
u_t=P_{t-s}u_s + \int_{s}^{t}P_{t-r}\multiplyfunc{\nabla u_r\cdot}\der W_r=P_{t-s} u_s +\int_s^t P_{t-r}(-\nabla^\ast)\multiplyfunc{u_r}\der W_r,
\end{equation} 
where $P_t=e^{\frac{1}{2}tL}$ is the semigroup generated by $\frac{1}{2}L$. By \cite{berline1992heat},\cite{taylor2011pseudodifferential}, $e^{tL}$ exists and has a smooth integral kernel (also smooth in $t$) for $t>0$, and it is a $C_0$-semigroup on $H^\alpha$ for all $\alpha\in\mathbb{R}$. 

By \cite{da2014stochastic}, a solution is also a mild solution.
\begin{prop}
	Let $u$ be a solution to the stochastic transport equation in the sense of definition \ref{defn-weak-ito}. Then $u$ is also a mild solution, i.e., for $0\leq s<t<T$, $\prob$-a.s.,
	(\ref{eq-mild}) holds.
\end{prop}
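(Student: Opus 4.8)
The plan is to derive the mild identity directly from the weak (It\^o) formulation of Proposition \ref{defn-weak-ito} by the classical device of testing against a \emph{time-dependent} test function, namely $r\mapsto P_{t-r}\phi$ for fixed smooth $\phi$ and fixed terminal time $t$. This is exactly the mechanism underlying the equivalence of weak and mild solutions in \cite{da2014stochastic}, so as a fallback one could simply invoke that reference; I prefer to spell out the one-line cancellation that drives it. Recall that $L=\nabla\cdot(A\nabla)$ is self-adjoint on $L^2(\manifold)$, so $P_\tau=e^{\tau L/2}$ is self-adjoint and maps $C^\infty(\manifold)$ into itself.

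First I would fix $t\in(s,T)$ and $\phi\in C^\infty(\manifold)$ and consider the real-valued process $r\mapsto (u_r,\psi_r)$ on $[s,t]$ with $\psi_r:=P_{t-r}\phi$. The curve $\psi_r$ is deterministic, smooth, and of bounded variation in $r$ with $\partial_r\psi_r=-\tfrac12 L\psi_r$. Applying the It\^o formula (Proposition \ref{prop ito's formula}) to the linear functional $x\mapsto(x,\psi_r)$, or equivalently upgrading the weak formulation of Proposition \ref{defn-weak-ito} to a time-dependent test by approximating $\psi_r$ by piecewise-constant curves, and noting that $\psi_r$ being deterministic contributes no quadratic variation, I obtain
\begin{equation*}
\der (u_r,\psi_r)= -(u_r,\nabla\psi_r\cdot\der W_r)+(u_r,\tfrac12 L\psi_r)\,\der r+(u_r,\partial_r\psi_r)\,\der r.
\end{equation*}
The last two drift contributions cancel exactly since $\partial_r\psi_r=-\tfrac12 L\psi_r$, leaving only the martingale term. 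Integrating over $[s,t]$ and using $\psi_t=P_0\phi=\phi$ and $\psi_s=P_{t-s}\phi$ gives
\begin{equation*}
(u_t,\phi)=(u_s,P_{t-s}\phi)-\int_s^t (u_r,\nabla(P_{t-r}\phi)\cdot\der W_r).
\end{equation*}

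Next I would re-express both terms on the right so that $\phi$ is the outermost object. Self-adjointness of $P_{t-s}$ yields $(u_s,P_{t-s}\phi)=(P_{t-s}u_s,\phi)$. For the stochastic term, the divergence-free assumption $\divergence W=0$ gives $\nabla u_r\cdot\der W_r=\divergence(u_r\,\der W_r)$, whence integration by parts together with self-adjointness of $P_{t-r}$ yields $-(u_r,\nabla(P_{t-r}\phi)\cdot\der W_r)=(P_{t-r}\multiplyfunc{\nabla u_r\cdot}\der W_r,\phi)$; this is precisely the integrand appearing in (\ref{eq-mild}). Since $(\cdot,\phi)$ is a bounded linear functional it commutes with the It\^o integral, so integrating these pairings over $[s,t]$ produces $\bigl(\int_s^t P_{t-r}\multiplyfunc{\nabla u_r\cdot}\der W_r,\phi\bigr)$. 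Hence $(u_t,\phi)=\bigl(P_{t-s}u_s+\int_s^t P_{t-r}\multiplyfunc{\nabla u_r\cdot}\der W_r,\phi\bigr)$ for every smooth $\phi$, and by density of $C^\infty(\manifold)$ in the relevant dual the mild identity (\ref{eq-mild}) holds in $H^\alpha(\manifold)$, $\prob$-a.s.

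The main obstacle is making the It\^o computation rigorous in the negative-order Sobolev setting, rather than the cancellation itself, which is clean and formal. Concretely I must (i) verify stochastic integrability of $r\mapsto P_{t-r}\multiplyfunc{\nabla u_r\cdot}$: its $L_0^2$-norm is controlled by $\|u_r\|_{H^\alpha}$ (as in the paragraph preceding Proposition \ref{defn-weak-ito}) times the uniform operator bound on the semigroup factor $P_{t-r}$, so square-integrability over $[s,t]$ is inherited from $u\in C([0,T],H^\alpha(\manifold))$; and (ii) justify differentiating $\psi_r$ in $r$ and commuting the functional $(\cdot,\phi)$ and the integration by parts past the stochastic integral. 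The latter is legitimate because every operator involved ($P_\tau$, $\multiplyfunc{\nabla u_r\cdot}$, $\nabla$, $\divergence$) is bounded between the fixed Sobolev scales used here, and the Fubini-type interchange for the It\^o integral against a bounded functional is standard \cite{da2014stochastic}. Modulo these bookkeeping points the identity follows.
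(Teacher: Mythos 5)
Your proof is correct. The paper offers no argument of its own for this proposition — it simply invokes \cite{da2014stochastic} — and your computation (testing the weak formulation against the time-dependent function $\psi_r = P_{t-r}\phi$, cancelling the drift $(u_r,\tfrac12 L\psi_r)\,\der r$ against $(u_r,\partial_r\psi_r)\,\der r$, then using self-adjointness of $P_{t-r}$ and the divergence-free property of $W$ to identify the resulting stochastic integrand with that of (\ref{eq-mild}), and finishing by density of smooth test functions) is exactly the standard Da Prato--Zabczyk argument being cited, so it coincides with the intended proof.
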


\subsection{Well-posedness of stochastic transport equations}
We will show that the solution to the stochastic transport equation is given by $u_t= (X_t^{-1})^*u_0$, where $X_t$ is the stochastic flow given by (\ref{eq-stoch-flow}), so we would like to discuss the properties of the stochastic flow first. 

The stochastic equation on the manifold should be understood as for any smooth function $\phi\in{C^\infty (\manifold)}$,
\begin{equation}\label{eq-st-test}
\der \phi (X_t)=-\nabla \phi (X_t)\cdot \circ \der W_t (X_t),
\end{equation}
and equivalently the equation holds locally in any coordinate system. It is sufficient to check the equation only for a family of charts that cover the manifold due to the coordinate-free nature of Stratonovich equations (as chain rule still holds). By \cite{kunita1986lectures} Theorem 1.4.1 we know that the equation (\ref{eq-stoch-flow}) determines a stochastic flow of diffeomorphism and (\ref{eq-st-test}) holds. (\ref{eq-st-test}) also holds as $H^n$-valued semimartingales ($\phi\circ X_t$ viewed as functions on $\manifold$)  for any $\phi\in{C^\infty ({\manifold})}$ and $n\in\mathbb{N}$. For any $t\leq0$, $X_t$ is a $C^\infty$ diffeomorphism almost surely, and by approximating with linear interpolation or mollification of the noise  (Wong Zakai's theorem), we can know that $X_t$ is measure-preserving.

For any smooth measure preserving diffeomorphism $f$ of the manifold, the pullback of a distribution $v$ by $f$ is defined as $ (f^*v,\phi)= (v,\phi\circ f^{-1})$ for any smooth function $\phi$.

Now we show that $u_t= (X_t^{-1})^*u_0$ is a  solution to the stochastic transport equation.

\begin{thm}
	Suppose $\alpha\in\mathbb{R}$. For given initial value $u_0$ that is a $\mathcal{F}_0$-measurable, $H^\alpha$-valued random variable, $u_t= (X_t^{-1})^* u_0$ is a  solution to the stochastic transport equation, where $X_t (x)$ is the stochastic flow generated by the driving noise.
\end{thm}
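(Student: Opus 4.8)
\emph{Proof proposal.} The definition of a solution is the weak It\^o identity of Proposition~\ref{defn-weak-ito}, so the plan is to test against a fixed $\phi\in C^\infty(\manifold)$ and verify that identity; the point of the representation $u_t=(X_t^{-1})^*u_0$ is that it lets me transfer all the regularity onto $\phi$. Since $X_t$ is a measure-preserving diffeomorphism, the definition of the pullback gives $(u_t,\phi)=(u_0,\phi\circ X_t)$, and more generally $(u_s,\psi)=(u_0,\psi\circ X_s)$ for every smooth $\psi$. Thus everything reduces to the semimartingale $F_t:=\phi\circ X_t$, which by the discussion around \eqref{eq-st-test} is a genuine $H^n$-valued semimartingale (for every $n$) solving the Stratonovich equation $\der F_t=-\nabla\phi(X_t)\cdot\circ\der W_t(X_t)$, and then to applying the $\mathcal F_0$-measurable functional $(u_0,\cdot)$.

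The first real step is to pass from this Stratonovich equation to It\^o form. Writing the $Q$-Wiener process as $W_t=\sum_k\sigma_k B^k$ with smooth vector fields $\sigma_k$ (so that $Q(x,y)=\sum_k\sigma_k(x)\otimes\sigma_k(y)$, which exists by Mercer's theorem as in Proposition~\ref{prop trace uqu}) and independent Brownian motions $B^k$, the chain rule gives $\der F_t=-\sum_k g_k(X_t)\circ\der B^k_t$ with $g_k:=\sigma_k\cdot\nabla\phi$. Converting to It\^o form adds the correction $-\tfrac12\sum_k\der\quavar{g_k(X_\cdot),B^k}_t$; since the $\der B^k$-part of $\der(g_k(X_t))$ is $-\nabla g_k(X_t)\cdot\sigma_k(X_t)\,\der B^k_t$, this correction equals $\tfrac12\sum_k\big(\sigma_k\cdot\nabla(\sigma_k\cdot\nabla\phi)\big)(X_t)\,\der t$. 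I would then expand
\[
\sum_k\sigma_k^i\partial_i(\sigma_k^j\partial_j\phi)=\Big(\sum_k\sigma_k^i\partial_i\sigma_k^j\Big)\partial_j\phi+A^{ij}\partial_i\partial_j\phi,
\]
and here the crucial cancellation is $\sum_k\sigma_k^i\partial_i\sigma_k^j=0$: combining $\partial_iA^{ij}=0$ with $\partial_{x_i}Q^{ij}(x,y)=0$ (both from the noise assumptions in Section~\ref{noise-def}) gives $\sum_k(\nabla\cdot\sigma_k)\sigma_k^j=0$, whence $\sum_k\sigma_k^i\partial_i\sigma_k^j=\partial_iA^{ij}-\sum_k(\nabla\cdot\sigma_k)\sigma_k^j=0$. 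Therefore the correction is exactly $\tfrac12(L\phi)(X_t)\,\der t$, and $F_t=\phi-\sum_k\int_0^t g_k(X_s)\der B^k_s+\tfrac12\int_0^t(L\phi)(X_s)\der s$ as an $H^n$-valued semimartingale.

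Applying the functional $(u_0,\cdot)$ and commuting it with the Bochner and It\^o integrals — legitimate because $u_0$ is $\mathcal F_0$-measurable and $F_t$ is an $H^n$-valued semimartingale with the required integrability — yields
\[
(u_t,\phi)=(u_0,\phi)-\sum_k\int_0^t(u_0,g_k\circ X_s)\der B^k_s+\tfrac12\int_0^t(u_0,(L\phi)\circ X_s)\der s.
\]
Using the pullback identity $(u_0,\psi\circ X_s)=(u_s,\psi)$ with $\psi=g_k$ and $\psi=L\phi$, and recognizing $\sum_k(u_s,\sigma_k\cdot\nabla\phi)\der B^k_s=(u_s,\nabla\phi\cdot\der W_s)$, this is precisely the weak It\^o identity of Proposition~\ref{defn-weak-ito}. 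It then remains to check the structural requirements: $u_t=(X_t^{-1})^*u_0$ is $\mathcal F_t$-adapted because the flow is adapted and $u_0$ is $\mathcal F_0$-measurable, and $u\in C([0,T],H^\alpha(\manifold))$ because pullback by a smooth diffeomorphism is bounded on $H^\alpha$ for every real $\alpha$ (by duality from the case of nonnegative order) and depends continuously on $t$ through the $t$-continuity of $X_t^{-1}$ and its spatial derivatives.

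I expect the main obstacle to be the rigorous justification of the last exchange — commuting the duality pairing against the possibly very rough datum $u_0$ (only $H^{-d/2-}$, as for white noise) with the It\^o integral — together with upgrading the corrector computation from its pointwise form to the claimed $H^n$-valued identity. The algebraic cancellation $\sum_k\sigma_k^i\partial_i\sigma_k^j=0$ is short, but it is the structural input that makes the second-order operator $L$ (rather than a spurious first-order drift) appear, so I would present it carefully. The continuity of $t\mapsto u_t$ in $H^\alpha$ for very negative $\alpha$ is routine and should be phrased via duality and the smoothness in $t$ of the flow established via \eqref{eq-stoch-flow}.
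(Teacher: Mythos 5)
Your proposal is correct and follows essentially the same route as the paper: both proofs transfer all regularity onto the test function via the measure-preserving pullback identity $(u_t,\phi)=(u_0,\phi\circ X_t)$, use the fact that $\varphi_t=\phi\circ X_t$ is an $H^n$-valued semimartingale (with $n$ large enough to pair against $u_0\in H^\alpha$) whose It\^o drift is $\tfrac12(L\phi)(X_t)$, pair with the $\mathcal{F}_0$-measurable datum $u_0$, commute the pairing with the integrals, and convert back with the pullback identity. The only difference is that you derive the It\^o--Stratonovich corrector explicitly through the Mercer decomposition and the cancellation $\sum_k\sigma_k^i\partial_i\sigma_k^j=0$, whereas the paper cites Kunita for the corrector and relies on its earlier computation $\partial_i A^{ij}=0$; this is a filled-in sub-step, not a different strategy.
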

\begin{proof}
	Note that $\frac{1}{2}L$ is the generator of the semigroup related to the Markovian process $X_t (x)$,  
	\begin{equation}
	\der\phi (X_t)=- (\nabla \phi) (X_t)\cdot \der W(X_t)+\frac{1}{2} (L\phi) (X_t).
	\end{equation}
	
	By the properties of the stochastic flow (see \cite{kunita1990stochastic}), the process $\varphi_t=X_t^*\phi=\phi\circ X_t$ is a semimartingale in $H^n$ for all $n\in\mathbb{N}$ and satisfies the following SDE:
	\begin{equation}
	\der \varphi_t(x) =\nabla\phi(X_t(x))\cdot\der W_t(X_t)+ \frac{1}{2}(L\phi)(X_t(x))\der t.
	\end{equation}
   We can choose $n>-\alpha$. Then
	\begin{equation}
	 (u_0,\der\varphi_t)=- (u_0,X_t^* (\nabla \phi\cdot\der W_t)) +  (u_0,\frac{1}{2}X_t^* (L\phi))\der t.
	\end{equation}  
	If $u_t= (X_t^{-1})^* u_0$, then $ (u_t,\phi)= (u_0,\varphi_t)$, since $X_t$ preserves the volume on the Riemannian manifold.
	\begin{align*}
	 (u_t,\phi)- (u_0,\phi)=& (u_0,\varphi_t-\varphi_0)
	\\=&\int_{0}^{t}\left ( -(u_0,X_s^* (\nabla \phi\cdot \der W_s))+  (u_0,\frac{1}{2}X_s^* (L\phi))\der s \right)
	\\=&\int_{0}^{t}\left ( - (u_s,\nabla \phi\cdot\der W_s)+  (u_s,\frac{1}{2}L\phi)\der s \right).
	\end{align*}
	
	Therefore, $u_t= (X_t^{-1})^*u_0$ is a  solution to the stochastic transport equation.
    The continuity of $u_t$ in $H^\alpha(\manifold)$ follows from the fact that the $k$-th order derivatives of the stochastic flow $X_t$ and its inverse are continuous in $t$ for any $k\in \mathbb{N}$ (see \cite{kunita1990stochastic}).
	
\end{proof}

Then we show that the  solution we construct  is the unique solution in $H^\alpha$. To show uniqueness, we can assume $\alpha\leq 0$ without loss of generality.
\begin{thm}
If $u_0$ is a $\mathcal{F}_0$-measurable $H^\alpha$-valued random variable, and $u$ is a  solution in $H^\alpha$ to the stochastic transport equation, for some $\alpha\leq 0$, then $u_t= (X_t^{-1})^*u_0$.  
\end{thm}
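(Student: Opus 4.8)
The plan is to prove uniqueness by a duality argument that mirrors the existence proof: we test an arbitrary solution against a family of test functions transported by the same stochastic flow, and show the resulting pairing is constant in time. Since the equation $\der u=\gradient u\cdot\der W+\tfrac12 Lu$ is linear and, by the previous theorem, $t\mapsto (X_t^{-1})^*u_0$ is a solution with the same initial datum $u_0$, it suffices by linearity to prove that a solution $v\in C([0,T],H^\alpha(\manifold))$ with $v_0=0$ vanishes identically; we may take $\alpha\le 0$ as noted.

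Fix $T>0$ and a smooth function $\phi$, and set $\eta_s=\phi\circ X_s^{-1}$ for $s\in[0,T]$. By the existence theorem applied with initial datum $\phi$, the process $\eta$ is $\mathcal{F}_s$-adapted, is an $H^n(\manifold)$-valued semimartingale for every $n$, and solves the transport equation in It\^o form, $\der\eta_s=\gradient\eta_s\cdot\der W_s+\tfrac12 L\eta_s\,\der s$. Choosing $n\ge-\alpha$, the duality pairing $(\cdot,\cdot)\colon H^\alpha(\manifold)\times H^n(\manifold)\to\mathbb{R}$ is a bounded bilinear form, so I would apply the Hilbert-space It\^o formula (Proposition \ref{prop ito's formula}) to $F(x,y)=(x,y)$ and the combined process $(v_s,\eta_s)$. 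The two martingale contributions, coming from the weak equation for $v$ tested against $\eta_s$ and from $(v_s,\der\eta_s)$, are $-(v_s,\gradient\eta_s\cdot\der W_s)$ and $+(v_s,\gradient\eta_s\cdot\der W_s)$ and cancel. The two It\^o drifts contribute $\tfrac12(v_s,L\eta_s)+\tfrac12(v_s,L\eta_s)$, while the cross-variation term $\tr(D^2F\,\Sigma^v_sQ(\Sigma^\eta_s)^\ast)$ equals $-(v_s,L\eta_s)\,\der s$. This last identity is exactly the It\^o--Stratonovich relation behind the corrector $\tfrac12 L$: in Stratonovich form $\der(v_s\eta_s)=\circ\der W_s\cdot\gradient(v_s\eta_s)$, which integrates to zero over $\manifold$ since $\divergence W=0$ (Stokes' theorem). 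Hence $(v_s,\eta_s)$ has vanishing drift and vanishing martingale part, so $(v_s,\eta_s)=(v_0,\phi)=0$ for all $s\in[0,T]$.

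Evaluating at $s=T$ gives $(v_T,\phi\circ X_T^{-1})=0$, $\prob$-almost surely, for each fixed smooth $\phi$. To conclude $v_T=0$ I would upgrade this to a statement holding simultaneously for all test functions: since $v_T\in H^\alpha(\manifold)$, the map $\theta\mapsto(v_T,\theta)$ is continuous on $H^{-\alpha}(\manifold)$, and as $\phi$ ranges over a countable set of smooth functions whose images $\{\phi\circ X_T^{-1}\}$ are dense in $H^{-\alpha}(\manifold)$ (recall $X_T^{-1}$ is a smooth diffeomorphism), the identity extends by continuity to $(v_T,\theta)=0$ for all $\theta$, whence $v_T=0$ almost surely. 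Since $T$ was arbitrary and $v$ is continuous in $H^\alpha(\manifold)$, we obtain $v\equiv0$, i.e. $u_t=(X_t^{-1})^*u_0$.

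The \textbf{main obstacle} is the rigorous justification of the It\^o product computation in the second step: the solution $v_s$ lives only in $H^\alpha(\manifold)$ with $\alpha\le 0$, so one is testing the weak formulation (Proposition \ref{defn-weak-ito}) not against a fixed smooth function but against the random, time-dependent and merely adapted integrand $\eta_s=\phi\circ X_s^{-1}$. This requires knowing that $\eta$ is a genuine $H^n$-valued semimartingale with the stated It\^o dynamics (which comes from Kunita's flow theory) and that the cross-variation term produces precisely $-(v_s,L\eta_s)\,\der s$; the divergence-free condition $\partial_i A^{ij}=0$ from Section \ref{noise-def} is what makes the drift cancel exactly. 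The final separability/continuity upgrade, by contrast, is routine.
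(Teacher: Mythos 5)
Your proposal is correct and follows essentially the same route as the paper's proof: test against the flow-transported smooth functions $\eta_s=\phi\circ X_s^{-1}$ (which the existence theorem makes solutions in every $H^n$), apply the Hilbert-space It\^o product formula so that the martingale parts cancel via the divergence-free property while the cross-variation term $(\nabla v_s, A\nabla \eta_s)=-(v_s,L\eta_s)$ cancels the two It\^o correctors, and conclude by letting $\phi$ run over a countable dense family whose transported images remain dense. The only adjustment needed is in the auxiliary index: since the It\^o formula must be applied on spaces containing the drift terms ($Lv_s\in H^{\alpha-2}$), you need the pairing to extend to $H^{\alpha-2}\times H^{n-2}$, i.e. $n\geq 2-\alpha$ rather than $n\geq-\alpha$ (the paper takes $\beta>4-\alpha$ and works on $H^{\alpha-2}\times H^{\beta-2}$), which is harmless here because $\eta$ is a semimartingale in every $H^n$.
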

\begin{proof}
	We will show that for any smooth solution, its pairing with $u_t$ does not depend on time. 
	
	Let $\phi$ be a smooth function on $\manifold$. By the previous discussion we know that $\phi_t := (X_t^{-1})^*\phi$ is a  solution in $H^\beta$ for any $\beta\in \mathbb{R}$. Take $\beta>-\alpha+4$.
	
    Using It\^o's formula (Proposition \ref{prop ito's formula}) for a function $F(u,v)=(u,v)_{H^{\alpha-2},H^{2-\alpha}}\in C^2(H^{\alpha-2}\times H^{\beta-2})$ , $D^2F=\begin{pmatrix}
        0 & \iota_{H^{\beta-2}\rightarrow (H^{\alpha-2})^\ast} 
        \\ \iota_{H^{\alpha-2}\rightarrow(H^{\beta-2})^\ast}& 0
    \end{pmatrix},$
    where $\iota_{H^{\beta-2}\rightarrow (H^{\alpha-2})^\ast}$ is the natural embedding from $H^{\beta-2}$ to $(H^{\alpha-2})^\ast =H^{2-\alpha}$ and the other $\iota$
	\begin{align*}
	&\der (u_t,\phi_t)_{H^{\alpha-2},H^{2-\alpha}}
	\\=& (u_t,\nabla\phi_t\cdot \der W_t)+\frac{1}{2} (u_t,L\phi_t)\der t+
    (\nabla u_t \cdot \der W_t,\phi_t)+\frac{1}{2}(Lu_t,\phi_t)\der t
    \\+&\frac{1}{2}\tr\begin{pmatrix}
        \multiplyfunc{\nabla\phi_t\cdot}Q\multiplyfunc{\nabla u_t\cdot} &
        \\& \multiplyfunc{\nabla u_t\cdot}Q\multiplyfunc{\nabla \phi_t\cdot}
    \end{pmatrix}\der t
	\\=& (\nabla(u_t\phi_t),\der W_t)+ (u_t,L\phi_t)\der t+ (\nabla u_t, A\nabla\phi_t)\der t
	\\=&  (u_t,\nabla \cdot  (A\nabla \phi_t))\der t- (u_t,\nabla\cdot  (A\nabla \phi_t))\der t
	\\=&0.
	\end{align*}
In the second step, the upper left block is an operator on $(H^{\alpha-2})^\ast$, and the lower block is an operator on $(H^{\beta-2})^\ast$. 
    
	Therefore, almost surely
	\begin{equation}\label{eq-conservation}
	 (u_t,\phi_t)= (u_0,\phi)= ( (X_t^{-1})^*u_0, \phi_t). 
	\end{equation}

	Let $\phi$ run over a countable dense subset of $C^\infty (\manifold)$. Then almost surely $\phi_t$ is also a dense subset ($ (X_t^{-1})^*$ is a continuous linear automorphism of $C^\infty (\manifold)$ due to the fact that $X_t$ is a diffeomorphism almost surely). By (\ref{eq-conservation}), $u_t= (X_t^{-1})^* u_0$. Therefore, for any given initial value that is almost surely in $H^\alpha$, the  solution is unique. 
	
\end{proof}

\subsection{\texorpdfstring{$L^2$}{L2} solutions, white noise solutions and their regularities  }
Our main focus is on two specific types of initial data, namely the deterministic $L^2$ case and white noise case.

Suppose $u_0$ is in $L^2 (\manifold)$. Then the solution $u$ to the stochastic transport equation satisfies $\lVert u_t \rVert_{L^2 (\manifold)}=\lVert u_t \rVert_{L^2 (\manifold)}$ $\prob$-almost surely, since the stochastic flow $X_t$ is measure preserving. 

Now we show the law of white noise is a stationary distribution of the stochastic transport equation.
\begin{thm}
	Suppose that $u_0$ is $\mathcal{F}_0$-measurable white noise. Then the solution to the stochastic transport equation $u_t= (X_t^{-1})^* u_0$ is stationary, i.e. $u_t$ is also a white noise for $t\geq 0$.
\end{thm}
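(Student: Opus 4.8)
The plan is to verify that $u_t$ has the characteristic functional of white noise and then invoke the characterization in Proposition \ref{prop-char-white}. Concretely, I would show that for every $\phi\in C^\infty(\manifold)$,
\[
\Expectation\left[\exp\left(i(u_t,\phi)\right)\right]=\exp\left(-\frac{\LtwoMnorm{\phi}^2}{2}\right).
\]
First I would rewrite the pairing using the defining property of the pullback by the measure-preserving diffeomorphism $X_t^{-1}$: since $(f^*v,\phi)=(v,\phi\circ f^{-1})$, taking $f=X_t^{-1}$ gives $(u_t,\phi)=((X_t^{-1})^*u_0,\phi)=(u_0,\phi\circ X_t)$. Because $\phi$ is smooth and $X_t$ is almost surely a $C^\infty$ diffeomorphism, $\phi\circ X_t$ is an honest smooth function, so this pairing is well-defined even though $u_0\in H^{-d/2-}(\manifold)$.

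The key structural input is independence. Since $W_0=0$ and $W$ is an $(\mathcal{F}_t)$-adapted $Q$-Wiener process, the increments $W_s=W_s-W_0$ for $s\in[0,t]$ are independent of $\mathcal{F}_0$; as the flow $X_t$ is a measurable functional of the path $(W_s)_{s\in[0,t]}$, it is independent of the $\mathcal{F}_0$-measurable white noise $u_0$. Let $\mathcal{G}$ denote the $\sigma$-algebra generated by $(W_s)_{s\le t}$. Conditioning on $\mathcal{G}$ freezes the flow, so $\psi:=\phi\circ X_t$ becomes a fixed smooth function while $u_0$ remains a white noise independent of $\mathcal{G}$; applying the white-noise characteristic functional (Proposition \ref{prop-char-white}) conditionally yields
\[
\Expectation\left[\exp\left(i(u_0,\phi\circ X_t)\right)\mid\mathcal{G}\right]=\exp\left(-\frac{\LtwoMnorm{\phi\circ X_t}^2}{2}\right).
\]

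Finally I would invoke the measure-preservation of $X_t$, established earlier via the Wong--Zakai approximation: by the change of variables $y=X_t(x)$,
\[
\LtwoMnorm{\phi\circ X_t}^2=\int_\manifold |\phi(X_t(x))|^2\der x=\int_\manifold |\phi(y)|^2\der y=\LtwoMnorm{\phi}^2,
\]
so the conditional characteristic function collapses to the deterministic constant $\exp(-\LtwoMnorm{\phi}^2/2)$. Taking expectations over $\mathcal{G}$ then gives the claimed identity, and Proposition \ref{prop-char-white} identifies $u_t$ as white noise. I expect the only delicate point to be the rigorous justification of the conditioning step --- verifying that $X_t$ is measurable with respect to the noise and independent of $\mathcal{F}_0$, and that the conditional form of the characteristic-function identity is legitimate (essentially a Fubini argument relying on the independence of $u_0$ and $\mathcal{G}$). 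The measure-preservation that makes the $L^2$ norm invariant is already available, so that step is routine.
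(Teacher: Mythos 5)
Your proposal is correct and follows essentially the same route as the paper's proof: reduce to the characteristic functional via Proposition \ref{prop-char-white}, write $(u_t,\phi)=(u_0,\phi\circ X_t)$, condition on the noise (the paper conditions on $X_t$, you on the $\sigma$-algebra it generates, which is the same argument made slightly more explicit), and conclude by measure-preservation of the flow. No substantive difference.
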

\begin{proof}
	By Proposition \ref{prop-char-white}, it suffices to show that for any smooth function $\phi$ on $\manifold$, 
	\begin{equation}
	\Expectation[\exp (i (u_t,\phi))]=\exp\left (-\frac{1}{2}\LtwoMnorm{\phi}^2\right).
	\end{equation}
	Note that $u_0$ and $X_t$ are independent since the noise is independent of $\mathcal{F}_0$, and $X_t$ is measure preserving. Then we have 
	\begin{align*}
	\Expectation[\exp (i (u_t,\phi))]=& \Expectation[\exp (i (u_0,\phi\circ X_t))]
	\\ = & \Expectation\big[ \Expectation\left[ \exp (i (u_0,\phi\circ X_t))\arrowvert X_t \right] \big]
	\\  = &\Expectation[\exp (-\frac{1}{2}\LtwoMnorm{\phi\circ X_t}^2)]
	\\ = &\exp\left (-\frac{1}{2}\LtwoMnorm{\phi}^2\right).
	\end{align*}
	
\end{proof}

Then we discuss regularity estimates of $L^2$ and white noise solutions. Firstly, we can bound the moments of the norm of the increments and then use Kolmogorov continuity criterion to get the H\"older regularity in time.

We look at $L^2$ solutions first.
\begin{thm}
	\label{thm l2 h-1 bd}Suppose $T>0$, $p\in[2,\infty)$, and $u$ is a $L^2$ solution with deterministic initial value $u_0$.
	Then there exists a constant $C>0$ which may depend on the manifold and $p,A$ such that for any $t,s\in[0,T]$,  
        \begin{equation}\label{eq bd h-1}
	\lpnormprob{\hnormm{u_t-u_s}{-1}}{p} \leq C |t-s|^{1/2} \LtwoMnorm{u_0}.
	\end{equation}
    Moreover, the constant $C$ can be chosen uniformly in $A$ provided $A$ and $A^{-1}$ are uniformly bounded in $L^\infty$.
\end{thm}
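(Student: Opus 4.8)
The plan is to work from the mild formulation \eqref{eq-mild} rather than the raw It\^o form. Writing $P_\tau=e^{\tau L/2}$ and $\Phi_r=\multiplyfunc{\gradient u_r\cdot}$, for $0\le s\le t\le T$ one has
\[
u_t-u_s=(P_{t-s}-\identity)u_s+\int_s^t P_{t-r}\Phi_r\,\der W_r.
\]
I would bound the two terms separately in $\lpnormprob{\hnormm{\cdot}{-1}}{p}$ and combine them by Minkowski's inequality. The reason to prefer the mild form is that in the It\^o form the drift $\tfrac12\int_s^t Lu_r\,\der r$ only lands in $H^{-2}$ (as $Lu_r\in H^{-2}$ for $u_r\in L^2$), so it cannot on its own yield an $H^{-1}$ bound at the right rate; the semigroup instead supplies exactly the smoothing that recovers $H^{-1}$ regularity. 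Throughout I would use the conservation law $\LtwoMnorm{u_r}=\LtwoMnorm{u_0}$ almost surely, which renders every bound deterministic.

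For the deterministic term I would estimate in the $A$-Sobolev norm, where $P_\tau$ interacts cleanly with $(1-L)^{\pm1/2}$, and then convert to $\hnormm{\cdot}{-1}$. Since $(1-L)^{-1/2}$ commutes with $e^{\tau L/2}$, functional calculus gives $\anorm{(P_\tau-\identity)v}{-1}\le\opnorm{(1-L)^{-1/2}(e^{\tau L/2}-\identity)}\LtwoMnorm{v}$, and the elementary spectral bound $\sup_{\lambda\ge0}(1+\lambda)^{-1/2}\lvert e^{-\tau\lambda/2}-1\rvert\le(\tau/2)^{1/2}$ (from $\lvert e^{-y}-1\rvert\le\min(1,y)\le y^{1/2}$) yields $\anorm{(P_{t-s}-\identity)u_s}{-1}\le((t-s)/2)^{1/2}\LtwoMnorm{u_0}$. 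The equivalence $\anorm{\cdot}{-1}\simeq\hnormm{\cdot}{-1}$ then produces the desired deterministic bound for this term.

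For the stochastic term I would apply Proposition~\ref{prop moment estimate via quadratic} with $V=H^{-1}$ and integrand $P_{t-r}\Phi_r$, giving
\[
\lpnormprob{\norm{\int_s^t P_{t-r}\Phi_r\,\der W_r}{H^{-1}}}{p}\le C\Bigl(\int_s^t\lpnormprob{\tr_{H^{-1}}(P_{t-r}\Phi_r Q\Phi_r^\ast P_{t-r}^\ast)}{p/2}\,\der r\Bigr)^{1/2}.
\]
By Proposition~\ref{prop trace(AB) < opnorm A trace B} the integrand is at most $\|P_{t-r}\|_{H^{-1}\to H^{-1}}^2\,\tr_{H^{-1}}(\Phi_r Q\Phi_r^\ast)$; moreover $P_\tau$ is a contraction in $\anorm{\cdot}{-1}$, hence bounded on $\hnormm{\cdot}{-1}$ by the norm equivalence, and the $L_0^2$ estimate recorded earlier in this section together with $L^2$ conservation gives $\tr_{H^{-1}}(\Phi_r Q\Phi_r^\ast)=\|\Phi_r\|_{L_0^2}^2\lesssim\LtwoMnorm{u_r}^2=\LtwoMnorm{u_0}^2$. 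Thus the integrand is a deterministic constant $\lesssim\LtwoMnorm{u_0}^2$, and integrating over $[s,t]$ gives the factor $|t-s|^{1/2}\LtwoMnorm{u_0}$.

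Finally, for uniformity in $A$ I would track where $A$ enters the constants: only through the equivalence $\anorm{\cdot}{\pm1}\simeq\hnormm{\cdot}{\pm1}$. For order $\pm1$ this follows from $\anorm{v}{1}^2=\LtwoMnorm{v}^2+(\gradient v,A\gradient v)\simeq\hnormm{v}{1}^2$, valid as soon as $cI\le A\le CI$, i.e. under two-sided $L^\infty$ bounds on $A$; the $H^{-1}$ case follows by duality with reciprocal constants. The spectral estimates on $P_\tau$ are themselves $A$-independent once written in the $A$-norm. I expect the main subtlety to be exactly this point: recovering the full $H^{-1}$ increment at rate $|t-s|^{1/2}$ (which forces the use of the mild formulation and the semigroup smoothing estimate) and doing so with constants depending on $A$ only through its $L^\infty$ bounds; the remaining steps are a routine combination of the Burkholder-type moment estimate and the contraction property of $P_\tau$.
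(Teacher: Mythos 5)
Your decomposition via the mild formulation, your treatment of the deterministic term $(P_{t-s}-\identity)u_s$, and your use of Proposition~\ref{prop moment estimate via quadratic} for the stochastic convolution all coincide with the paper's proof. However, there is a genuine gap in the step where you bound the trace: you invoke the $L_0^2$ estimate from the well-posedness discussion, namely $\tr_{H^{-1}}(\Phi_r Q\Phi_r^\ast)=\|\Phi_r\|_{L_0^2}^2\le \|\nabla u_r\|_{H^{-1}}^2\,\tr_{H^{\beta}}(Q)$ with $\beta>1+d/2$. The hidden constant here is $\tr_{H^\beta}(Q)=\tr((1-\Delta)^\beta Q)$, which involves roughly $2\beta>d+2$ derivatives of the covariance kernel and is \emph{not} controlled by $\|A\|_{L^\infty}$, i.e.\ by the diagonal $Q(x,x)$ alone. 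This contradicts the statement being proved (the constant may depend only on $\manifold$, $p$, $A$, uniformly in $A$ under $L^\infty$ bounds), and it destroys the intended application: for properly scaled noises, e.g.\ $Q^h=(4\pi h^2)^{d/2}\tfrac{d}{d-1}\leray e^{h^2\Delta}$ from Theorem~\ref{thm scaling via mollified space time white noise}, one has $\tr_{H^\beta}(Q^h)\sim h^{-2\beta}\to\infty$ by Weyl asymptotics, so your constant blows up exactly in the regime where Theorem~\ref{thm l2 h-1 bd} is used for the scaling limit. Your concluding claim that $A$ enters only through the equivalence $\anorm{\cdot}{\pm1}\simeq\hnormm{\cdot}{\pm1}$ is therefore incorrect: it also enters, fatally in your version, through the trace of $Q$ in a high-order Sobolev space.

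The repair is the paper's key maneuver, which uses the divergence-free structure of the noise. Since $\nabla\cdot W=0$, the integrand can be written as $P_{t-r}(-\nabla^\ast)\multiplyfunc{u_r}$ rather than $P_{t-r}\multiplyfunc{\nabla u_r\cdot}$, so that
\[
\tr_{H_A^{-1}}\bigl(P_{t-r}\nabla^\ast\multiplyfunc{u_r}Q\multiplyfunc{u_r}\nabla P_{t-r}\bigr)
=\tr_{L^2}\Bigl((\multiplyfunc{u_r}Q\multiplyfunc{u_r})\circ\bigl(\nabla(1-L)^{-1}P_{2(t-r)}\nabla^\ast\bigr)\Bigr)
\]
by cyclicity of the trace. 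Now the trace falls on the factor $\multiplyfunc{u_r}Q\multiplyfunc{u_r}$, whose $L^2$-trace is computed by Proposition~\ref{prop trace uqu} as $\int_\manifold u_r(x)^2\tr A(x)\,\der x\lesssim \LtwoMnorm{u_0}^2\|A\|_{L^\infty}$ (only the diagonal of $Q$ appears), while the operator norm falls on the smoothing factor $\nabla(1-L)^{-1}P_{2(t-r)}\nabla^\ast$, which is bounded on $L^2(\manifold,\tanbdl)$ uniformly in $A$ under two-sided $L^\infty$ bounds; Proposition~\ref{prop trace(AB) < opnorm A trace B} then gives the product bound. With this substitution your remaining steps go through verbatim and yield the stated uniformity.
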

\begin{proof}
 By mild formulation,
	$$u_t-u_s= (P_{t-s}-\identity)u_s +\int_{s}^{t}P_{t-r}\nabla\cdot(u_r\der W_r).$$
	The first term can be easily bounded by heat kernel estimates (can be easily proved using the equivalence of $\anorm{\cdot}{-1}$ and $H^{-1}$ and the spectral decomposition with respect to $L$) and the $L^2$ boundedness of $u_s$:
    \begin{equation}\label{u -1 bd 1}
        \hnormm{ (P_{t-s}-\identity)u_s}{-1}\lesssim_{A,\manifold} (t-s)^{1/2}\LtwoMnorm{u_s}= (t-s)^{1/2}\LtwoMnorm{u_0}.
    \end{equation}

	Then we bound the stochastic convolution using  Proposition  \ref{prop moment estimate via quadratic} and Corollary \ref{cor qaalpha expansion},
    \begin{align*}
        &\lpnormprob{\anorm{\int_s^t P_{t-r} \nabla u_r \cdot \der W_r}{-1}}{p}^{2}
        \\& \lesssim_p\int_s^t \left(\Expectation \left(\int_{\manifold\times\manifold} Q(x,y)\cdot u_r(x)u_r(y)\nabla_x\nabla_y q_{t-r}^{A,-1}(x,y) \der x \der y\right)^{p/2} \right)^{2/p} \der r
        \\ &= \int_s^t \lpnormprob{\tr ((\multiplyfunc{u_r}Q\multiplyfunc{u_r})\circ (\nabla(1-L)^{-1}P_{2(t-s)}\nabla^\ast)}{p/2}\der r.
    \end{align*}
    By Proposition  \ref{prop trace uqu}, we have
    \begin{align*}
        \tr (\multiplyfunc{u_r}Q\multiplyfunc{u_r})&=\int_\manifold u_r(x)^2\tr A(x)\der x
        \\&\lesssim_\manifold  \LtwoMnorm{u_r}^2\lpmani{A}{\infty}.
    \end{align*}
    The operator on $L^2(\manifold,\tanbdl)$, $$\nabla(1-L)^{-1}P_{2(t-s)}\nabla^\ast,$$ is bounded uniformly in $A$, provided that $A$ and $A^{-1}$ are uniformly bounded in $L^\infty$, as we have $$\nabla\in L(H^1(\manifold,\tanbdl),L^2(\manifold)),$$ $$\|(1-L)^{-1}e^{tL}\|_{H^{-1}_A\rightarrow H_A^1}\leq 1,$$  and $$\nabla^\ast\in L(L^2(\manifold),H^{-1}(\manifold,\tanbdl)).$$
Note that   $(1+\|A^{-1}\|_{L^\infty})^{-1/2}\hnormm{\cdot}{1}\leq\anorm{\cdot}{1}\leq (1+\|A\|_{L^\infty})^{1/2}\hnormm{\cdot}{1}) $. Then by Proposition \ref{prop trace(AB) < opnorm A trace B},
    \begin{equation*}
        \tr ((\multiplyfunc{u_r}Q\multiplyfunc{u_r})\circ (\nabla(1-A)^{-1}P_{2(t-s)}\nabla^\ast)\lesssim_{A,\manifold} \LtwoMnorm{u_0}^2.
    \end{equation*}
    Hence, 
    \begin{equation} \label{u -1 bd 2}
        \lpnormprob{\anorm{\int_s^t P_{t-r} \nabla u_r \cdot \der W_r}{-1}}{p} \lesssim_{A,\manifold,p} \LtwoMnorm{u_0}(t-s)^{1/2}.
    \end{equation}
    Combining the estimates \eqref{u -1 bd 1} and \eqref{u -1 bd 2} and the fact that the $H^1_A$ and $H^1$ are equivalent (up to a constant that can be bounded by some functions of $\|A\|_{L^\infty}$ and $\|A^{-1}\|_{L^\infty}$), we get the desired bound.
    \end{proof}

Note that $\LtwoMnorm{u_t-u_0}\leq 2\LtwoMnorm{u_0}$, and then by interpolation with (\ref{eq bd h-1}), we get
\begin{equation}
    \lpnormprob{\hnormm{u_t-u_s}{-\kappa}}{p}\lesssim  (t-s)^{\kappa/2}\LtwoMnorm{u_0},
\end{equation}
for any $\kappa\in[0,1],p\in[1,\infty).$ Then by Kolmogorov continuity criterion, we have the following corollary:
\begin{cor}\label{cor-holder-moment-l2}
	Let $T>0$ and $\{u_t\}_{0\leq t \leq T}$ be a $L^2$ solution to the stochastic transport equation. Then for all $\alpha \in [0,1/2),\varepsilon>0$, $u$ is almost surely in the space $C^{\alpha} ([0,T],H^{-2\alpha-\varepsilon} (\manifold))$.
	
	Moreover, for any $p\geq 1$,
	\begin{equation}
	\Expectation \left\lVert u \right\rVert_{C^{\alpha} ([0,T],H^{-2\alpha-\varepsilon})}^p \leq C_{\alpha,\varepsilon,\manifold,p,A,T}\LtwoMnorm{u_0}
	\end{equation}
	for some constant $C_{\alpha,\varepsilon,\manifold,p,A,T}<\infty$, which can be chosen uniformly in $A$ if $A$ and $A^{-1}$ are uniformly bounded in $L^\infty$.
\end{cor}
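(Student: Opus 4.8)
The plan is to obtain the corollary as a pure time-regularity statement, derived from the spatial increment bound of Theorem \ref{thm l2 h-1 bd} by Kolmogorov's continuity criterion for processes valued in a separable Hilbert space. I would fix $\alpha\in[0,1/2)$ and $\varepsilon>0$ and set $\kappa:=2\alpha+\varepsilon$. If $\kappa\geq 1$ one replaces $\kappa$ by $1$ and uses the continuous embedding $H^{-1}(\manifold)\hookrightarrow H^{-2\alpha-\varepsilon}(\manifold)$, so without loss of generality $\kappa\in(0,1]$ and the target space is $H^{-\kappa}(\manifold)=H^{-2\alpha-\varepsilon}(\manifold)$.

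First I would upgrade the $H^{-1}$ increment bound to an $H^{-\kappa}$ bound. Interpolating $H^{-\kappa}$ between $H^0=L^2$ and $H^{-1}$ gives $\hnormm{v}{-\kappa}\leq\LtwoMnorm{v}^{1-\kappa}\hnormm{v}{-1}^{\kappa}$, and $L^2$ conservation yields the deterministic bound $\LtwoMnorm{u_t-u_s}\leq 2\LtwoMnorm{u_0}$. Combining these with H\"older's inequality on $\Omega$ and Theorem \ref{thm l2 h-1 bd} (applied at exponent $\kappa q\geq 2$) produces, for every $q\geq 2/\kappa$,
\begin{equation*}
\lpnormprob{\hnormm{u_t-u_s}{-\kappa}}{q}\lesssim |t-s|^{\kappa/2}\LtwoMnorm{u_0},
\end{equation*}
which is exactly the interpolated estimate recorded just before the corollary. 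Raising to the power $q$ gives $\Expectation\,\hnormm{u_t-u_s}{-\kappa}^q\lesssim |t-s|^{q\kappa/2}\LtwoMnorm{u_0}^q$.

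Next I would apply the quantitative Kolmogorov continuity criterion in $H^{-\kappa}(\manifold)$. The exponent $q\kappa/2$ exceeds $1$ precisely when $q>2/\kappa$, and the resulting H\"older modification may be taken of any order strictly below $\kappa/2-1/q=\alpha+\varepsilon/2-1/q$. This is where the strict positivity of $\varepsilon$ enters: it supplies the slack needed to reach the prescribed exponent $\alpha$. Choosing $q>\max\{2/\kappa,\,2/\varepsilon,\,p\}$ guarantees $\alpha<\kappa/2-1/q$, so $u$ admits an $\alpha$-H\"older continuous $H^{-\kappa}$-valued modification, and the quantitative form of the criterion bounds the $q$-th moment of the H\"older seminorm by $C\LtwoMnorm{u_0}^q$. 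The supremum-in-time part of the $C^\alpha$ norm is controlled deterministically, since $\hnormm{u_t}{-\kappa}\leq\LtwoMnorm{u_t}=\LtwoMnorm{u_0}$. As $q\geq p$, Jensen's inequality descends the $q$-th moment bound to the desired $p$-th moment bound, valid for every $p\geq 1$.

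The only point requiring genuine care is the bookkeeping of the three parameters $\kappa$, $q$, and $\alpha$ — trading $\varepsilon$ units of spatial regularity for room in the Kolmogorov exponent — so that a single large $q$ simultaneously clears the threshold $q\kappa/2>1$ and reaches the target H\"older order; everything else is a direct assembly of interpolation, $L^2$ conservation, and Kolmogorov. Finally, uniformity of the constant in $A$ follows because the sole $A$-dependence enters through the constant of Theorem \ref{thm l2 h-1 bd}, which is already uniform under the assumed $L^\infty$ bounds on $A$ and $A^{-1}$, whereas the interpolation and Kolmogorov constants depend only on $\alpha,\varepsilon,p,\manifold,T$.
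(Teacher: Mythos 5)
Your proposal is correct and follows essentially the same route as the paper: interpolate the $H^{-1}$ increment bound of Theorem~\ref{thm l2 h-1 bd} against the deterministic $L^2$-conservation bound to get $\lpnormprob{\hnormm{u_t-u_s}{-\kappa}}{q}\lesssim |t-s|^{\kappa/2}\LtwoMnorm{u_0}$, then apply the quantitative Kolmogorov continuity criterion, with the supremum part of the $C^\alpha$ norm controlled by $L^2$ conservation. One parameter slip worth fixing: in the capped regime $2\alpha+\varepsilon\geq 1$, where you work with $\kappa=1$, the condition to reach order $\alpha$ is $1/q<1/2-\alpha$, and your choice $q>\max\{2/\kappa,2/\varepsilon,p\}$ does not guarantee this, since there $\varepsilon\geq 1-2\alpha$, so $2/\varepsilon\leq 2/(1-2\alpha)$ (e.g.\ $\alpha=0.49$, $\varepsilon=1$, $q=3$ satisfies your constraint but gives H\"older order at most $1/6$); requiring instead $q>\max\{2,\,2/(1-2\alpha),\,p\}$, which is possible precisely because $\alpha<1/2$, repairs this case and leaves the rest of the argument unchanged.
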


Now we look at the regularity of white noise solutions.
\begin{thm}\label{thm-regularity-moment}
	Let $T>0$ and $\{u_t\}_{0\leq t \leq T}$ be a white noise solution to the stochastic transport equation. Then for any $\kappa\in  (0,1), p\in[2,+\infty)$, there exists a constant $C=C(p,\kappa,T,A,\manifold)>0$ such that
	\begin{equation}\label{bd wn regularity moment}
	\left (\Expectation\hnormm{u_t-u_s}{-\frac{d}{2}-\kappa}^p\right)^{\frac{1}{p}}\leq C_{p,\kappa,T,A,\manifold}  (t-s)^{\kappa/2}. 
	\end{equation}
    Moreover, the constant $C$ can be chosen uniformly in $A$, provided that $A$ and $A^{-1}$ are uniformly bounded in the $C^k$ norm for some sufficiently large integer $k$. 
\end{thm}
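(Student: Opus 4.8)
The plan is to split the increment through the mild formulation \eqref{eq-mild}, writing
\[
u_t - u_s = (P_{t-s}-\identity)u_s + \int_s^t P_{t-r}(-\nabla^\ast)\multiplyfunc{u_r}\der W_r =: \mathrm{I}+\mathrm{II},
\]
and to bound each piece separately in $\anorm{\cdot}{-d/2-\kappa}$, which is equivalent to $\hnormm{\cdot}{-d/2-\kappa}$ up to constants controlled by the $C^k$ norms of $A$ and $A^{-1}$; the final conversion back to the $H^{-d/2-\kappa}$ norm then carries the claimed uniformity in $A$. Throughout I would use that the white noise solution is stationary, so that $u_r$ is itself a white noise for every $r\in[0,T]$; this is exactly what lets me evaluate moments of quadratic expressions in $u_r$ through the Wiener-chaos estimates of Proposition \ref{prop-est-wn-quadratic}. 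Note $\kappa\in(0,1)$ forces $-d/2-\kappa\in(-d/2-1,-d/2)$, putting us in the hypotheses of Lemmas \ref{lma:heat kernel estimate C2} and \ref{lma: heat kernel difference trace bound}.

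For the semigroup term $\mathrm{I}$, the key point is that $(P_{t-s}-\identity)u_s$ is a fixed linear image of the Gaussian $u_s$, hence a centered Gaussian element of $H_A^{-d/2-\kappa}$ whose covariance has trace equal to $\Expectation\anorm{(P_{t-s}-\identity)u_s}{-d/2-\kappa}^2 = \tr_{L^2}\bigl((1-L)^{-d/2-\kappa}(1-e^{(t-s)L/2})^2\bigr)$, using self-adjointness of $P_\tau=e^{\tau L/2}$ and its commutation with $(1-L)$. Lemma \ref{lma: heat kernel difference trace bound} bounds this trace by $C(t-s)^\kappa$, and Proposition \ref{prop Q Wiener existence} upgrades the variance bound to all $p$-th moments, giving $\lpnormprob{\anorm{\mathrm{I}}{-d/2-\kappa}}{p}\lesssim_p (t-s)^{\kappa/2}$ with constant uniform in $A$ by the uniformity built into that lemma.

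For the stochastic convolution $\mathrm{II}$, I would apply the Burkholder-type bound Proposition \ref{prop moment estimate via quadratic}, reducing matters to $\int_s^t \lpnormprob{\tr_{H_A^{-d/2-\kappa}}(\Phi_r Q\Phi_r^\ast)}{p/2}\der r$ with $\Phi_r=P_{t-r}(-\nabla^\ast)\multiplyfunc{u_r}$. Expanding $\Phi_r Q\Phi_r^\ast = P_{t-r}\nabla^\ast\multiplyfunc{u_r}Q\multiplyfunc{u_r}\nabla P_{t-r}$ and invoking Corollary \ref{cor qaalpha expansion} with parameter $t-r$ and $\alpha=-d/2-\kappa$, this trace equals the white-noise quadratic form $(u_r\boxtimes u_r,\tilde K_r)$ with kernel $\tilde K_r(x,y)=(\nabla_x\nabla_y \kernelqAalpha{-d/2-\kappa}{t-r})(x,y)\cdot Q(x,y)$. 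Since $\tr_{H_A^{-d/2-\kappa}}(\Phi_r^{(u)}Q(\Phi_r^{(u)})^\ast)\ge 0$ for every deterministic $u$, the operator $\tilde K_r$ is positive semidefinite, so the positive-definite second-chaos estimate \eqref{moment second integral} gives $\lpnormprob{\tr(\Phi_r Q\Phi_r^\ast)}{p/2}\lesssim_p \tr\tilde K_r = \int_\manifold (\nabla_x\nabla_y \kernelqAalpha{-d/2-\kappa}{t-r})(x,x)\cdot A(x)\der x$. Lemma \ref{lma:heat kernel estimate C2} controls this by $\cknorm{\kernelqAalpha{-d/2-\kappa}{t-r}}{2}\,\lpmani{A}{\infty}\,|\manifold|\lesssim (t-r)^{\kappa-1}$, where the exponent comes from $-d/2-\alpha-1=\kappa-1$, and integrating the integrable singularity $\int_s^t (t-r)^{\kappa-1}\der r\lesssim_\kappa (t-s)^\kappa$ yields $\lpnormprob{\anorm{\mathrm{II}}{-d/2-\kappa}}{p}\lesssim (t-s)^{\kappa/2}$.

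Combining the two estimates and converting norms gives \eqref{bd wn regularity moment}, with uniformity in $A$ inherited from Lemmas \ref{lma:heat kernel estimate C2} and \ref{lma: heat kernel difference trace bound} together with the norm-equivalence constants. I expect the main obstacle to be the bookkeeping that identifies the $L_0^2$-trace $\tr_{H_A^{-d/2-\kappa}}(\Phi_r Q\Phi_r^\ast)$ with a positive-semidefinite white-noise quadratic form whose kernel is precisely the one governed by Corollary \ref{cor qaalpha expansion} and Lemma \ref{lma:heat kernel estimate C2}: in particular verifying the positive semidefiniteness required for \eqref{moment second integral}, and tracking the exponent $\kappa-1$ so that the time integral converges and produces the sharp $(t-s)^{\kappa/2}$ rate.
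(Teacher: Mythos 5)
Your proposal is correct and follows essentially the same route as the paper's proof: the same mild-formulation splitting, Lemma \ref{lma: heat kernel difference trace bound} plus Gaussian moment equivalence for the semigroup term, and Proposition \ref{prop moment estimate via quadratic}, Corollary \ref{cor qaalpha expansion}, Proposition \ref{prop-est-wn-quadratic} and Lemma \ref{lma:heat kernel estimate C2} for the stochastic convolution, with the identical exponent bookkeeping $(t-r)^{\kappa-1}$ integrating to $(t-s)^\kappa$. Your explicit verification of the positive semidefiniteness of the kernel $\tilde K_r$ (needed for \eqref{moment second integral}) and of the role of stationarity are points the paper leaves implicit, so they are welcome additions rather than deviations.
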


\begin{proof}
    By mild formulation, 
    \begin{equation}
        u_t-u_s=(P_{t-s}-1)u_s + \int_{s}^{t} P_{t-r} \nabla u_r \cdot \der W_r.
    \end{equation}
    By  Proposition \ref{prop-wn-moment},  
    \begin{align*}
       & \lpnormprob{\anorm{(P_{t-s}-1)u_s }{-d/2-\kappa}}{p}
        \\&\lesssim_{p} \left(\tr((1-L)^{-d/2-\kappa}(P_{t-s}-1)^2)\right)^{1/2}  
    \end{align*}
   
    Then we use Lemma \ref{lma: heat kernel difference trace bound} to get
    \begin{equation}\label{bd wn regularity 1}
        \lpnormprob{\anorm{(P_{t-s}-1)u_s }{-d/2-\kappa}}{p}\lesssim_{A,T,p,\kappa,\manifold} (t-s)^{\kappa/2},
    \end{equation}where the implicit constant can be chosen uniformly in $A$, provided that the $C^k$ norm of $A,A^{-1}$ for some large enough $k$.
    
    We bound the stochastic convolution using  Proposition  \ref{prop moment estimate via quadratic} and Corollary \ref{cor qaalpha expansion} by
    \begin{align*}
        &\lpnormprob{\anorm{\int_s^t P_{t-r} \nabla u_r \cdot \der W_r}{-d/2-\kappa}}{p}^{2}
        \\  &\lesssim_p \int_s^t \lpnormprob{\tr_{H_A^{-d/2-\kappa}}(P_{t-r}\nabla ^\ast \multiplyfunc{u_r} Q  \multiplyfunc{u_r}  \nabla P_{t-r})}{p/2} \der r
        \\&=
        \int_s^t \left(\Expectation \left(\int_{\manifold\times\manifold} u_r(x)u_r(y)Q(x,y)\cdot \nabla_x\nabla_y q_{t-r}^{A,-d/2-\kappa}(x,y) \der x \der y\right)^{p/2} \right)^{2/p} \der r,
    \end{align*}
    where $q_t^{A,\alpha}$ is the integral kernel of the operator $(1-L)^\alpha e^{tL}$.
    By  Proposition \ref{prop-est-wn-quadratic}, and applying the bound in Lemma \ref{lma:heat kernel estimate C2}, we get
    \begin{align*}
        &\left(\Expectation \left(\int_{\manifold\times\manifold} u_r(x)u_r(y)Q(x,y)\cdot \nabla_x\nabla_y q_{ t-r}^{A,-d/2-\kappa}(x,y) \der x \der y\right)^{p/2}\right)^{2/p}
      \\  &\lesssim_p \int_\manifold Q(x,x)\cdot \nabla_x\nabla_y q_{ t-r}^{A,-d/2-\kappa}(x,y)|_{y=x} \der x
     \\& \leq |\manifold |\|A\|_{L^\infty} \|q^{A,-d/2-\kappa}_{ t-r}\|_{C^2}
     \\ &\lesssim_{\manifold,A,T} (t-r)^{-1+\kappa}.
    \end{align*}
    The constant (coming from Lemma \ref{lma:heat kernel estimate C2}) may depend on $C^k$ norm of $A$ and $A^{-1}$ for some large enough $k$.
    Therefore, 
       \begin{align*}
        &\phantom{=}\lpnormprob{\anorm{\int_s^t P_{t-r} \nabla u_r \cdot \der W_r}{-d/2-\kappa}}{p}
        \\& \lesssim_{{A,T,p,\kappa,\manifold}}\left( \int_s^t (t-r)^{-1+\kappa} \der r \right)^{1/2}
        \\&\lesssim_\kappa(t-s)^{\kappa/2}.
        \end{align*}
        Together with the bound \eqref{bd wn regularity 1}, we get \eqref{bd wn regularity moment}, with $H^\alpha$ norm replaced by its equivalent norm $H^\alpha_A$, and the constant is uniformly bounded in $A$ if $A$ and $A^{-1}$ are uniformly bounded in the $C^k$ norm for sufficiently large $k$. 
        \end{proof}

Then by applying the Kolmogorov continuity criterion, we have the following corollary showing that the $C^\alpha$ norm  (in time variable) of the solutions as processes in $H^{-\frac{d}{2}-2\alpha-\varepsilon}$ has bounded moments:

\begin{cor}\label{cor-holder-moment}
	Let $T>0$ and $\{u_t\}_{0\leq t \leq T}$ be a white noise solution to the stochastic transport equation. Then for all $\alpha \in [0,1/2),\varepsilon>0$, $u$ is almost surely in the space $C^{\alpha} ([0,T],H^{-\frac{d}{2}-2\alpha-\varepsilon} (\manifold))$.
	
	Moreover, for any $p>1$,
	\begin{equation}
	\Expectation \left\lVert u \right\rVert_{C^{\alpha} ([0,T],H^{-\frac{d}{2}-2\alpha-\varepsilon})}^p \leq C_{\alpha,\varepsilon,\manifold,p,A,T}
	\end{equation}
	for some constant $C_{\alpha,\varepsilon,\manifold,p,A,T}<\infty$, which can be chosen uniformly in $A$, provided that $A$ and $A^{-1}$ are uniformly bounded in the $C^k$ norm for some sufficiently large integer $k$. 
\end{cor}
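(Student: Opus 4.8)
The plan is to deduce the corollary from the increment estimate of Theorem~\ref{thm-regularity-moment} by the Kolmogorov--Chentsov continuity criterion for Banach-space-valued processes, used in its quantitative form (via the Garsia--Rodemich--Rumsey inequality) so that it controls not only almost-sure Hölder continuity but the \emph{moments} of the Hölder seminorm. Fix $\alpha\in[0,1/2)$ and $\varepsilon>0$. Since $H^{-d/2-2\alpha-\varepsilon}(\manifold)\supseteq H^{-d/2-2\alpha-\varepsilon'}(\manifold)$ for $\varepsilon'\le\varepsilon$, it suffices to treat small $\varepsilon$; in particular I may assume $\kappa:=2\alpha+\varepsilon\in(0,1)$, which is exactly the range in which Theorem~\ref{thm-regularity-moment} applies (here $2\alpha<1$ because $\alpha<1/2$). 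With this choice the theorem gives, for every $p\in[2,\infty)$,
\begin{equation*}
\Expectation\hnormm{u_t-u_s}{-d/2-\kappa}^p \le C_{p,\kappa,T,A,\manifold}^p\,(t-s)^{p\kappa/2},\qquad s,t\in[0,T],
\end{equation*}
so the increment bound holds with exponent $p\kappa/2$ on $|t-s|$.

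Next I would apply the Kolmogorov criterion in the Banach space $B=H^{-d/2-\kappa}(\manifold)$. Writing $p\kappa/2=1+\beta$ with $\beta=p\kappa/2-1$, the criterion produces a modification whose paths are $\gamma$-Hölder for every $\gamma<\beta/p=\kappa/2-1/p$, and the GRR form of the estimate bounds the $p$-th moment of the Hölder seminorm $[u]_{C^\gamma([0,T],B)}$ by a constant depending only on $p,\gamma,T$ times $C_{p,\kappa,T,A,\manifold}^p$. Since $\kappa/2=\alpha+\varepsilon/2>\alpha$, choosing $p$ large enough that $1/p<\varepsilon/2$ lets me take $\gamma=\alpha$, which yields $u\in C^\alpha([0,T],H^{-d/2-2\alpha-\varepsilon})$ almost surely together with a bound on the seminorm moments.

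To upgrade the Hölder seminorm to the full $C^\alpha$ norm I would add the supremum-in-time part. Because the white noise solution is stationary, each $u_t$ is itself a white noise, so by Proposition~\ref{prop regularit of wn} (which rests on Proposition~\ref{prop: finite trace} and Proposition~\ref{prop Q Wiener existence}) the quantity $\Expectation\hnormm{u_t}{-d/2-\kappa}^p=\Expectation\hnormm{u_0}{-d/2-\kappa}^p$ is finite and independent of $t$. Combining this with the seminorm bound gives the stated moment estimate on $\norm{u}{C^\alpha([0,T],H^{-d/2-2\alpha-\varepsilon})}$.

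There is no serious analytic obstacle here: the real content sits in Theorem~\ref{thm-regularity-moment}, and what remains is the exponent bookkeeping needed to land exactly on regularity $\alpha$ in time and $-d/2-2\alpha-\varepsilon$ in space while keeping $\beta/p>\alpha$. The two points that require genuine care are (i) invoking the moment version of Kolmogorov's theorem rather than its merely almost-sure conclusion, so that the Hölder \emph{norm}, and not just path continuity, is controlled; and (ii) tracking the dependence on $A$. For the latter, the increment constant $C_{p,\kappa,T,A,\manifold}$ in Theorem~\ref{thm-regularity-moment} is uniform in $A$ provided $A$ and $A^{-1}$ are bounded in a sufficiently high $C^k$ norm, and the Kolmogorov/GRR constant depends only on $p,\gamma,T$; hence this uniformity is inherited by $C_{\alpha,\varepsilon,\manifold,p,A,T}$, as claimed.
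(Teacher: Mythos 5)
Your proposal is correct and follows essentially the same route as the paper: the paper's proof is precisely to apply the Kolmogorov continuity criterion to the increment bound of Theorem~\ref{thm-regularity-moment} with $\kappa=2\alpha+\varepsilon$, which is what you do, with the details (the moment form via Garsia--Rodemich--Rumsey, the exponent bookkeeping $\kappa/2-1/p>\alpha$ for large $p$, and the control of the supremum in time through stationarity and Proposition~\ref{prop regularit of wn}) spelled out explicitly. The only point left implicit is that the case of small $p>1$ follows from the large-$p$ bound by H\"older's inequality, which is immediate.
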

The corollary can be used to prove tightness in $C^{\alpha-\varepsilon} ([0,T],H^{-d/2-2\alpha})$ since $C^\alpha  ([0,T], H^\beta)$ is compactly embedded in $C^{\alpha-\varepsilon } ([0,T], H^{\beta-\varepsilon})$ for any $\varepsilon>0$. 

	\section{Scaling limits}\label{section-scaling-limit}

    In this section we will show that if we scale the noise properly, the solutions to the stochastic transport equations converge to some universal objects depending on the initial conditions. 
    
    If the initial value is in $L^2$, the $L^2$ norm is always preserved by the stochastic transport equation ($\LtwoMnorm{u_t}=\LtwoMnorm{u_0}$) while the limiting process is the solution to a dissipative PDE with $\LtwoMnorm{u^{(\infty)}_t}<\LtwoMnorm{u_0}$. Therefore the convergence cannot happen in $L^2(\manifold)$ even in probability. We will get a quantitative estimate on the convergence rate, which is of the same order as in \cite{flandoli2024quantitative}.
    
    If we consider the white noise solutions, the limiting process is given by a stochastic heat equation with additive noise of the form of $ (-\Delta)^{-1/2}\xi$, where $\xi$ is the space time white noise on the manifold $\manifold$. 

We consider a sequence of solutions $u^{ (N)}$ of the stochastic transport equation driven by noises $W^{ (N)}$ with covariance $Q^{ (N)}$. And we let $A^{ (N)} (x)=Q^{ (N)} (x,x)$  be the value of the covariation on the diagonal.
\subsection{Assumptions on the scaling of noises}\label{s-proper-scaling}
In order to have convergence to heat equations, we need the convergence of the It\^{o}-Stratonovich corrector. Therefore, we need to assume
\begin{equation}\label{condition-diagonal}
\lim_{n\rightarrow\infty} A^{ (N)} = \identity, \: \textrm{in } \: C^\infty (\manifold, \cotbdl\otimes\tanbdl),
\end{equation}
where $\identity (x)$ is the identity matrix in the space of linear transformations on $T_x\manifold$. 

It would be more convenient to consider the case where $A^{ (N)}$ are fixed, which can be achieved when the manifold has enough symmetry, for example $\mathbb{S}^d, \mathbb{T}^d$ with the usual metric. But in general we do not know if we can keep $A^{ (N)}$ fixed, as it is difficult to keep track of the divergence free property and the covariance on the diagonal at the same time.  

The trace of the covariance operator  (as an operator on $L^2 (\manifold, \tanbdl)$) is given by the integration on the diagonal of the trace of the integral kernel:
\begin{equation*}
\tr Q = \int_{\manifold}  \tr Q (x,x) \der x
\end{equation*}
Therefore, the condition (\ref{condition-diagonal}) guarantees the convergence of $\tr Q^{ (N)}$ to 
$|\manifold|^2 d$, where $|\manifold|$ denotes the volume of the manifold and $d$ is the dimension of the manifold. The boundedness of the trace of $Q$ is also guaranteed by (\ref{condition-diagonal}).

We also want the noise to be decorrelated, which means that the correlation between different points decays fast enough. In fact, it is enough to control the contribution of the correlation off the diagonal if we know that the operator norm of $Q^{ (N)}$ goes to zero, or equivalently, the Hilbert-Schmidt norm of $Q^{ (N)}$ goes to zero as condition (\ref{condition-diagonal}) already guarantees the boundedness of trace.

Therefore, we introduce our second assumption: as $N$ goes to infinity,
\begin{equation}\label{condition-l2}
\lVert Q^{ (N)}\rVert_{L^2\rightarrow L^2} \longrightarrow 0,
\end{equation} 
or equivalently  (under condition (\ref{condition-diagonal})),
\begin{equation}\label{condition-hs}
Q^{ (N)} \longrightarrow 0 \: \text{in }  L^2 (\manifold\times\manifold, \tanbdl \boxtimes\tanbdl), \tag{\ref*{condition-l2}{}'}
\end{equation} which is equivalent to the Hilbert Schmidt norm of the operator $Q^{ (N)}$  (as an operator on $L^2 (\manifold,\tanbdl$)) going to zero, since
\(
    \hsnorm{Q^{ (N)}}
    =\|Q\|_{L^2 (\manifold\times\manifold, \tanbdl\boxtimes \tanbdl )}
\).

The equivalence of condition (\ref{condition-l2}) and condition (\ref{condition-hs}) under condition (\ref{condition-diagonal}) is guaranteed by the fact that the trace of operator $Q^{ (N)}$ is bounded and that $$\opnorm{ Q^{(N)}}\leq \hsnorm{Q^{(N)}}\leq \opnorm{Q^{(N)}}^{1/2}(\tr Q^{(N)})^{1/2}.$$ (It follows from $\|\theta\|_{l^\infty}\leq \|\theta\|_{l^2}\leq \|\theta\|_{l^\infty}^{1/2}\| \theta\|_{l^1}^{1/2}$, where $\theta$ is the sequence of eigenvalues of $Q^{(N)}$.) 

\begin{defn}
\label{def proper scaling}
    Let $\{Q^{(N)}\}_N$  be a sequence of smooth, positive semidefinite and divergence-free
covariance operators. We say that $\{Q^{(N)}\}_N$ undergoes \emph{a proper scaling}, if (\ref{condition-diagonal})
 and (\ref{condition-l2}) are satisfied, or equivalently (\ref{condition-diagonal}) and (\ref{condition-hs}
) are satisfied.
\end{defn}

    
\begin{rmk}
    In condition (\ref{condition-diagonal}), we do not necessarily require convergence in \( C^\infty \). It is sufficient for \( A^{(N)} \) to converge in \( C^k \) for some sufficiently large \( k \). This requirement arises from two key factors:
    
    1. The equivalence between the  Sobolev spaces \( H^{-d/2-\kappa}_{A^{(N)}} \) and \( H^{-d/2-\kappa} \) for \( \kappa \in [0,1) \), which depends on uniform control over the norms of \( A \) and \( A^{-1} \).
    
    2. The implicit constants appearing in the heat kernel estimates, which may require uniform boundedness of \( C^k \)-norms of \( A \) and \( A^{-1} \) for some sufficiently large \( k \).

    To ensure uniform equivalence of Sobolev norms, one typically needs \( k \geq d/2 \). For the heat kernel estimates, we expect a similar requirement, but the precise regularity condition on \( A \) is not explicitly tracked in the geometry or analysis literature (see \cite{berline1992heat}, \cite{taylor2011pseudodifferential}).

    Nevertheless, in the  proper scaling we construct, \( A^{(N)} \) is chosen to converge in \( C^\infty \), ensuring that all estimates remain valid without ambiguity.
\end{rmk}

Roughly speaking, we want to make the covariance $Q^{ (N)} (x,y)$  converge to $\identity (x)1_{y} (x)$. Then the trajectories of the flow starting from different points would be like independent Brownian motions on the manifold.
Although we cannot get independence because the flow has to be incompressible, we expect that in the limit they should be  asymptotically independent. 
If we look at $n$ different particles, whose initial positions are independently uniformly distributed, in the stochastic flow driven by $\circ\der W^{ (N)}$, then the joint law of their trajectories becomes independent Brownian motions in the limit.  

\subsection{Examples of proper scalings}
In this subsection, we will give some examples of proper scalings of the noises.

On manifolds with enough symmetries, we can easily tune the covariances of the noises to be constant on the diagonal, which gives even better condition than (\ref{condition-diagonal}). On general manifolds, we do not know whether if we can still keep $A^{(N)}$ fixed, but we can construct a proper scaling with renormalized mollified white noise  satisfying condition (\ref{condition-diagonal}) and (\ref{condition-l2}).

In this discussion we take the ``isotropic'' noises $\der W^{ (N)}=\leray\Theta^{ (N)} (-\hodgelplc)\xi$, where $\leray$ is the Leray projection, $\Theta^{ (N)}$ are functions on the spectrum of $-\hodgelplc$ (minus Hodge Laplacian) and $\xi$ is space-time white noise on the tangent bundle. We let $\theta_j=\Theta (\lambda_j)$, where $\lambda_j$ are the eigenvalues of $-\hodgelplc$. In this case $Q^{ (N)}= \left (\leray\Theta^{ (N)} (-\hodgelplc)\right)^*\leray\Theta^{ (N)} (-\hodgelplc)=\leray \left (\Theta^{ (N)}\right)^2 (-\hodgelplc)$.

For simplicity, we omit the upper index $N$ if it holds for any $N\in \mathbb{N}^*$.
If $f$ is an isometry, then the pushforward of $f_*A$ is exactly $A$. In coordinates,
\[A^{ij} (f (x))=A^{kl} (x)\frac{\partial f^i}{\partial x^k} (x)\frac{\partial f^j}{\partial x^l}  (x)\]

\subsubsection{Proper scalings on \texorpdfstring{$\mathbb{T}^d,\mathbb{S}^d$}{Td, Sd}}\label{torusscal}
Now we take $\mathbb{T}^d,\mathbb{S}^d$ as examples to discuss the situation on ``nice" manifolds. 

We start with the torus $\mathbb{T}^d=\mathbb{R}^d/\mathbb{Z}^d$ which is the same case as in the previous works (\cite{galeati2020convergence, flandoli2024quantitative}). In fact, our conditions are almost the same as those in the previous works (they only differ up to  some multiplicative constants).

We take the usual coordinates on the torus which is given by the lift of the map $\mathbb{R}^d\rightarrow\mathbb{Z}^d$. By translational invariance, we get that $A ^{(N)}(x)=Q^{(N)} (x,x)$ does not depend on the position of $x$. Since the maps swapping two coordinates are isometries, $A^{ii}=A^{jj}$. Since reflecting the $i$-th coordinate is also an isometry, $A^{ij}=-A^{ij}=0$ for all $j\neq i$. Therefore, $A$ is a constant times the identity matrix, where the constant is the trace of $\tr Q^{(N)}/d$, since 
\[\tr Q^{(N)}=\int_{\manifold}\tr Q^{(N)} (x,x)\der x= A^{ii}d. \]

Now condition (\ref{condition-diagonal}) is equivalent to $\lVert\theta^{ (N)}\rVert_{l^2}$ (the $l^2$ norm) going to $\sqrt{d}$. If we keep the $l^2$ norm of $\theta^{ (N)}$ to be exactly $\sqrt{d}$ we can even get a better condition with $A^{ (N)}=\identity$. Condition (\ref{condition-l2}) is equivalent to $\lVert \theta^{ (N)} \rVert_{l^\infty}$ going to zero. 

The same trick also works when $\manifold$ is the $d$-dimensional sphere $\mathbb{S}^d$. Due to rotational invariance of the metric, we can obtain that in normal coordinates $A^i_j (0)$ commutes with any orthogonal matrix. Therefore, $A^i_j$ must be a constant times the identity matrix at the origin of that normal coordinate map. Since the isometry group ($O (n)$) acts transitively on $\mathbb{S}^d$, the constant does not depend on the position. Thus, $A^i_j$ is  a constant times the identity  matrix. And the trace of $Q$ is the constant times $d$ times the volume of $\mathbb{S}^d$. Therefore, condition (\ref{condition-diagonal}) is equivalent to $\lVert \theta^{ (N)} \rVert_{l^2} \rightarrow  (|\mathbb{S}^d|d)^{1/2}$.

\subsubsection{Proper scalings via mollified white noise}\label{scaling}
As in the last two examples, on ``nice'' manifolds like $\mathbb{T}^d,\mathbb{S}^d$, the covariances on diagonal $A^{ (N)}$ are some constants times the identity  matrix by symmetry arguments. On general manifolds, this property does not hold. Therefore, we cannot  acquire  (\ref{condition-diagonal}) simply by controlling the traces of $Q^{ (N)}$ and it seems impossible to keep $A$ fixed. In this case, we can still construct a sequence of desired noises via mollification of the space-time white noise with the heat kernel. 

\begin{thm}
\label{thm scaling via mollified space time white noise}
    Let $\Theta^h(\lambda)= (4\pi h^2)^{d/4}\sqrt\frac{d}{d-1}(\leray  e^{h^2\lambda/2}) $.
    Then 
    \begin{equation}
        Q^{h}=h^{-d} (4\pi h^2)^{d/2}\frac{d}{d-1}(\leray  e^{h^2\Delta/2})(\leray  e^{h^2\Delta/2})^\ast=(4\pi h^2)^{d/2}\frac{d}{d-1}e^{h^2 \Delta}.
    \end{equation}
    
    As $h\rightarrow0$, $Q^h$ satisfies the proper scaling condition, with the following estimates of convergence rate:
    \begin{itemize}
        \item For any positive integer $l$, \begin{equation}
        \label{estimate Ah}
            \cknorm{A^h-\identity}{l}\lesssim h^2,
        \end{equation} where $A^h(x)=Q^h(x,x)$ is the restriction to the diagonal, and the implicit constant may depend on $\manifold$ and $l$.
\item \begin{equation}\label{estimate Qh op norm}
    \opnorm{Q^h}\leq C h^{d},
\end{equation}
with $C=(4\pi)^{d/2}\frac{d}{d-1}$.
    \end{itemize}
\end{thm}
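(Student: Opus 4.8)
The plan is to reduce the theorem to the two displayed estimates \eqref{estimate Ah} and \eqref{estimate Qh op norm}, since these immediately yield the proper scaling conditions: \eqref{estimate Ah} holding for every $l$ gives $A^h\to\identity$ in $C^\infty$, which is condition \eqref{condition-diagonal}, while \eqref{estimate Qh op norm} gives $\opnorm{Q^h}\to 0$ as $h\to 0$, which is condition \eqref{condition-l2}. Before that I would record the explicit form of the covariance. Writing $\Theta^h = (4\pi h^2)^{d/4}\sqrt{\tfrac{d}{d-1}}\,\leray e^{h^2\Delta/2}$ and using that $\leray$ is a self-adjoint orthogonal projection commuting with $\Delta$, one gets
\[
Q^h=\Theta^h(\Theta^h)^\ast=(4\pi h^2)^{d/2}\tfrac{d}{d-1}\,\leray e^{h^2\Delta/2}e^{h^2\Delta/2}\leray=(4\pi h^2)^{d/2}\tfrac{d}{d-1}\,\leray e^{h^2\Delta}.
\]
This operator is smooth (its kernel is smooth since the heat kernel is), positive semidefinite (being of the form $\Theta^h(\Theta^h)^\ast$), and divergence-free, because $\coder\leray=0$ forces $\divergence Q^h=0$; hence $Q^h$ is an admissible covariance in the sense of Section \ref{noise-def}.

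For \eqref{estimate Ah}, I would identify the diagonal of the kernel of $Q^h$ with the object $a_t$ from Theorem \ref{thm: diagonal leray heat kernel converge}. Since $a_t(x)=\tilde p_t(x,x)$ is the diagonal of the kernel of $\leray e^{t\Delta}$, we have $A^h(x)=(4\pi h^2)^{d/2}\tfrac{d}{d-1}a_{h^2}(x)$, hence
\[
A^h-\identity=(4\pi h^2)^{d/2}\tfrac{d}{d-1}\Big(a_{h^2}-(4\pi h^2)^{-d/2}\tfrac{d-1}{d}\identity\Big).
\]
Applying Theorem \ref{thm: diagonal leray heat kernel converge} with $t=h^2$ and taking the $C^l$ norm gives
\[
\cknorm{A^h-\identity}{l}\le (4\pi)^{d/2}\tfrac{d}{d-1}\,C\,h^d\big(h^{-d+2}+1\big)=(4\pi)^{d/2}\tfrac{d}{d-1}C\,(h^2+h^d),
\]
and for $d\ge 2$ and $h\le 1$ we have $h^d\le h^2$, so $\cknorm{A^h-\identity}{l}\lesssim h^2$, the implicit constant depending only on $\manifold$ and $l$. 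This is the only step that uses the (already established) heat-kernel expansion, and it is where the correcting constant $\tfrac{d}{d-1}$ is forced by the leading term $(4\pi t)^{-d/2}\tfrac{d-1}{d}\identity$ of $a_t$.

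For \eqref{estimate Qh op norm}, I would simply estimate the operator norm of the factors. Since $\Delta$ (the Hodge Laplacian on $1$-forms) is negative semidefinite, $e^{h^2\Delta}$ is a contraction on $L^2(\manifold,\tanbdl)$, and $\leray$ is an orthogonal projection, so
\[
\opnorm{Q^h}=(4\pi h^2)^{d/2}\tfrac{d}{d-1}\opnorm{\leray e^{h^2\Delta}}\le(4\pi h^2)^{d/2}\tfrac{d}{d-1}=(4\pi)^{d/2}\tfrac{d}{d-1}\,h^d,
\]
which is exactly \eqref{estimate Qh op norm} with $C=(4\pi)^{d/2}\tfrac{d}{d-1}$. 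Combining the two estimates verifies both scaling conditions and completes the proof. The genuinely hard analytic input---the uniform diagonal expansion of the kernel of $\leray e^{t\Delta}$ as $t\to 0$---has been isolated in Theorem \ref{thm: diagonal leray heat kernel converge}; here the only remaining difficulty is the bookkeeping of the powers of $h$ and of the normalizing constants, together with checking that the $\leray$ factors collapse correctly so that the operator-norm bound keeps the clean constant $(4\pi)^{d/2}\tfrac{d}{d-1}$.
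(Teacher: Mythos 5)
Your proposal is correct and follows essentially the same route as the paper: both reduce the theorem to the diagonal estimate via Theorem \ref{thm: diagonal leray heat kernel converge} applied at $t=h^2$, and both obtain the operator-norm bound from the contraction property of $e^{h^2\Delta}$ together with $\leray$ being an orthogonal projection. Your additional verifications (collapsing the $\leray$ factors, smoothness, positive semidefiniteness, divergence-freeness) are sound and in fact resolve the typographical inconsistencies in the theorem's displayed formula for $Q^h$ in the same way the paper's proof implicitly does.
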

\begin{proof}
     By Theorem \ref{thm: diagonal leray heat kernel converge},  for $0<h\leq 1$,
     \[
     \cknorm{a_{h^2}-(4\pi h^2)^{-d/2}\frac{d-1}{d}\identity}{l}\lesssim h^{-d+2},
     \] where $a_{h^2}$ is the restriction on the diagonal of the kernel of $\leray e^{h^2\Delta}$. As $A^h=(4\pi h^2)^{d/2}\frac{d}{d-1}a_{h^2}$, we get \eqref{estimate Ah}.
     
     $\Delta$ is positive semidefinite and generates a contraction semigroup on $L^2$, and the Leray projection is an orthogonal projection. Therefore, we get
     \[ \opnorm{Q^h}\leq(4\pi h^2)^{d/2}\frac{d}{d-1}\opnorm{e^{h^2\Delta}}\opnorm{\leray}\leq (4\pi )^{d/2}\frac{d}{d-1} h^{d},\]
     which is the estimate \eqref{estimate Qh op norm}.
\end{proof}

From the heat kernel expansion in Section \ref{subsec heat kernel expansion}. we also get that the typical correlation length scale is of order $h$.

The above scaling of noises falls into the large scale analysis regime that we discussed in the introduction.
We study the stochastic transport equation on rescaled manifold $\lambda\manifold$  (which is  the manifold with a scaled Riemannian metric $ (\manifold,\lambda^2 g)$) driven by space time white noise mollified with kernel $\Theta (-\Delta)$, and then we pull the solution back to $\manifold$ with diffusive scaling ($u^{ (\lambda)}_t (x)=\lambda^{d/2}\tilde u^\lambda_{\lambda^2 t} (\lambda x)$, where the $\lambda x\in \lambda \manifold$ on the RHS is actually  $x\in\manifold$, but we denote it by $\lambda x$ as the spatial distance is scaled by a factor of $\lambda$). Then $u^{ (\lambda)}$ still satisfies a stochastic transport equation, but the noise is mollified with the kernel $\Theta^h (-\Delta)$, with $h=\lambda^{-1}$. For the initial condition, white noise is preserved under the scaling, and $L^2$ initial condition should be rescaled such that  $\tilde u^\lambda_{0} (\lambda x)=\lambda^{d/2} u_0(x)$.


\begin{rmk}
Preliminary computations suggest we can also construct other proper scalings using additional techniques. 

With Theorem \ref{thm: diagonal leray heat kernel converge}, and some fractional calculus techniques, we can also construct proper scaling of the form $Q^h=Ch^{-d-2\alpha}\leray e^{h^2\Delta}(1-\Delta)^\alpha$, for $\alpha>-d/2$ and the noise is given by $\partial_t W^h=h^{-d/2-\alpha}e^{h^2\Delta}(1-\Delta)^{\alpha/2}\xi$. 

We can also take a much larger class of proper scalings with the help of semiclassical analysis in \cite{zworski2022semiclassical}. We can take the noise of the form $h^{d/2+2}\leray \Delta a^{\operatorname{w}}(x,hD)\xi $, where $a(x,p)\in L(T_x\manifold)$ for $(x,p)\in \cotbdl$,  and $a$ is a Schwartz function on the cotangent bundle The semiclassical operator $a^{\operatorname{w}}(x,hD)$ is defined to be the Weyl quantization of $a$.
On $\mathbb{R}^d$, the operator is defined as 
\begin{equation}
    a^{\operatorname{w}}(x,hD) u(x):=\frac{1}{(2\pi h)^d}\int_{\mathbb{R}^d}\int_{\mathbb{R}^d} e^{\frac{i}{h}(x-y,p)} a\left(\frac{x+y}{2},p\right)u(y)\der p\der y.
\end{equation}
For detailed definition of the quantization on manifolds, see \cite{zworski2022semiclassical}. We apply an additional Laplacian to deal with the singularity of  the symbol  of Leray projection at $(x,0)$.
Then the principal symbol of $Q^h$ is $\sigma(Q^h)(x,p)=h^d(|p|^2\identity-p \otimes p)a(x,p)a^\ast(x,p)(|p|^2\identity-p\otimes p)$. $A^h(x)=(2\pi)^{-d}\int_{T_x^\ast\manifold}\sigma(Q^h)(x,p)\der p + O_{C^\infty}(h)$. And possibly with a better choice of the quantization with half densities, the error has a better bound of order $h^2$. The operator norm of $Q^h$ is of order $h^d$.

\end{rmk}

\subsection{Convergence to stochastic heat equations of white noise solutions}\label{s-cvg-wn}
In this subsection, we are going to show that under the proper scaling, the white noise solutions to the stochastic transport equations converge to a limiting object, which is a stochastic heat equation with some additive noise. We will first find the correct limit by computing the generators and then use a tightness argument to show the convergence.
\subsubsection{Convergence of generators}
We are going to define the generators on a family of cylindrical functions $$\cylindricalFunctions :=\left\{F (u)=f\big ( (u,\phi_1),\dots, (u,\phi_n)\big) \Big| n\in\mathbb{N}^*, f\in C_b^2 (\mathbb{R}^n), \phi_j\in C^\infty (\mathcal{M})  \right\}.$$

An operator $\mathcal{L}$ is said to be a generator of the stochastic transport equation, if for all $F\in \cylindricalFunctions$, and for any $L^2$ or white noise solutions $u$ to the stochastic transport equations,
\begin{equation}
M^F_t=F (u_t)-F (u_0)-\int_{0}^{t}\mathcal{L}F (u_s)\der s 
\end{equation}
is a  martingale.
\begin{prop}\label{generator}
	The generator $\mathcal{L}$ of the stochastic transport equation driven by noise with covariance operator $Q$ is given as follows:
	for any $n>0$, $f\in C^2_b (\mathbb{R}^n)$,$F (\cdot)=f ( (\cdot,\phi_1),\dots, (\cdot,\phi_j))\in\cylindricalFunctions$, the operator $\mathcal{L}$ acting on $F$ is given by 
	\begin{equation}
	 (\mathcal{L}F) (u)=\sum_{k=1}^{n}F_k (u) (u, \frac{1}{2}L\phi_k)+\frac{1}{2}\sum_{k,l=1}^{n}F_{kl} (u) (u\nabla\phi_k,Q (u\nabla\phi_l)),
	\end{equation}
	where $F_l (u)=\partial_l f ( (u,\phi_1),\dots, (u,\phi_n)), F_{kl}=\partial_k\partial_l f ( (u,\phi_1),\dots, (u,\phi_n))$.
\end{prop}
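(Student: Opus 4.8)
The plan is to apply It\^o's formula after reducing the problem to finite dimensions. Since any $F\in\cylindricalFunctions$ depends on $u$ only through the finitely many linear functionals $(u,\phi_1),\dots,(u,\phi_n)$, I would set $Y^k_t:=(u_t,\phi_k)$ and study the $\mathbb{R}^n$-valued process $Y_t=(Y^1_t,\dots,Y^n_t)$. The weak formulation in Proposition \ref{defn-weak-ito} already exhibits each $Y^k$ as a real-valued continuous semimartingale, with drift $\int_0^t(u_s,\tfrac{1}{2}L\phi_k)\,\der s$ and martingale part $-\int_0^t(u_s,\nabla\phi_k\cdot\der W_s)$. Thus $F(u_t)=f(Y_t)$ is a smooth function of a vector semimartingale, and I can invoke the classical finite-dimensional It\^o formula, bypassing the distribution-valued nature of $u$ entirely (the infinite-dimensional formula of Proposition \ref{prop ito's formula} would serve equally well, but the finite-dimensional reduction is cleaner).

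The key additional input is the joint quadratic covariation, supplied by \eqref{quav-test}: $\der\quavar{Y^k,Y^l}_t=(u_t\nabla\phi_k,Q(u_t\nabla\phi_l))\,\der t$. Substituting the semimartingale decompositions together with this bracket into It\^o's formula yields
\begin{equation*}
\begin{aligned}
\der f(Y_t) &= \sum_k F_k(u_t)\Big[(u_t,\tfrac{1}{2}L\phi_k)\,\der t - (u_t,\nabla\phi_k\cdot\der W_t)\Big] \\
&\quad + \tfrac{1}{2}\sum_{k,l} F_{kl}(u_t)\,(u_t\nabla\phi_k, Q(u_t\nabla\phi_l))\,\der t.
\end{aligned}
\end{equation*}
Collecting the finite-variation part recovers exactly $(\mathcal{L}F)(u_t)\,\der t$ as asserted, while the term $-\sum_k\int_0^t F_k(u_s)(u_s,\nabla\phi_k\cdot\der W_s)$ is a local martingale. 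Hence $M^F_t=F(u_t)-F(u_0)-\int_0^t(\mathcal{L}F)(u_s)\,\der s$ is at least a local martingale, which is the structural content of the statement.

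The remaining and main task is to upgrade this to a genuine martingale by checking integrability, and here the two solution classes require slightly different arguments. For $L^2$ initial data I would use the pathwise conservation $\LtwoMnorm{u_t}=\LtwoMnorm{u_0}$ together with $f\in C^2_b$: the weight $F_k(u_s)$ is bounded, and the bracket density $(u_s\nabla\phi_k,Q(u_s\nabla\phi_l))$ is controlled by $\LtwoMnorm{u_0}^2$ times constants depending only on $Q$ and the $\phi_k$, so the martingale part is square-integrable. For white noise initial data I would instead exploit stationarity (each $u_t$ is again a white noise) and rewrite the quadratic functional $(u_t\nabla\phi_k,Q(u_t\nabla\phi_l))$ as an integral against the \emph{smooth} kernel $\nabla\phi_k(x)\cdot Q(x,y)\nabla\phi_l(y)$; its finite moments then follow from the second-chaos estimates of Proposition \ref{prop-est-wn-quadratic}, while the boundedness of $F_k$ and the Gaussian moment bounds of Proposition \ref{prop-wn-moment} control the stochastic integral.

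I expect the main obstacle to be precisely this integrability bookkeeping in the white noise case, since there $u_t$ is only a distribution in $H^{-d/2-}$ and one must verify that every term genuinely makes sense and has finite expectation. The smoothing provided by the smooth kernel of $Q$, the stationarity of the white noise solution, and the Wiener-chaos moment bounds are exactly what resolve this; the algebraic identification of $\mathcal{L}$ itself is immediate once the semimartingale decomposition and the bracket \eqref{quav-test} are in hand.
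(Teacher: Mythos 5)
Your proposal is correct and follows essentially the same route as the paper: It\^o's formula applied to the finite-dimensional semimartingale $((u_t,\phi_1),\dots,(u_t,\phi_n))$, whose drift and martingale parts come from the weak formulation and whose bracket is supplied by \eqref{quav-test}. You are in fact more thorough than the paper's own proof, which stops at ``is a martingale'' without spelling out the integrability bounds (via $L^2$ conservation, respectively stationarity plus the second-chaos estimates of Proposition \ref{prop-est-wn-quadratic}) that you correctly identify as the step needed to pass from a local martingale to a true one.
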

\begin{proof}
	Suppose $u_t$ is a solution to the stochastic transport equation.
	By It\^{o}'s formula,
	\begin{align*}
	\der F (u_t)=&\sum_{k=1}^{n}F_k (u_t) \left ( (u_t,\nabla\phi_k\cdot\der W_t)+\frac{1}{2} (u_t,L\phi_k)\der t \right)\\&+\frac{1}{2}\sum_{k,l=1}^{n}F_{kl} (u_t)\der\quavar{ (u,\phi_k), (u,\phi_l)}_t.
	\end{align*}
	Then by (\ref{quav-test}), 
	\[M^F_t=F (u_t)-F (u_0)-\int_{0}^{t}\mathcal{L}F (u_s)\der s\] is a  martingale.
\end{proof}

\begin{thm}[Convergence of generators]\label{thm-generator-cvgc}
	Suppose $\generator{N}$ is the generator of the stochastic transport equation driven by noise with covariance operator $Q^{ (N)}$, and the noises are properly tuned with (\ref{condition-diagonal}), (\ref{condition-hs}) satisfied. Then for any $F\in\cylindricalFunctions$, $\generator{N} F$ converges in $L^p (\mu)$ ($\mu$ is the law of white noise) for all $p\in[1,\infty)$ to $\generator{\infty}F$, with $\generator{\infty}$ defined by 
	\begin{equation}
	\generator{\infty}F (u)=\sum_{k=1}^{n}F_k (u) (u,\frac{1}{2}\Delta\phi_k)+\frac{1}{2}\sum_{k,l=1}^{n}F_{kl} (u) (\nabla\phi_k,\nabla\phi_l),
	\end{equation}
	where the notations $F_k,F_{kl}$ are the same as in Proposition \ref{generator}.
\end{thm}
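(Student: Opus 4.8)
The plan is to compare, term by term, the expression for $\generator{N}F$ furnished by Proposition \ref{generator} with the claimed limit $\generator{\infty}F$. Writing $L^{(N)}u=\divergence(A^{(N)}\gradient u)$ for the It\^o--Stratonovich corrector attached to $Q^{(N)}$, the difference $\generator{N}F(u)-\generator{\infty}F(u)$ splits as
$$\sum_{k=1}^n F_k(u)\Big(u,\tfrac12(L^{(N)}-\Delta)\phi_k\Big)+\frac12\sum_{k,l=1}^n F_{kl}(u)\Big[(u\gradient\phi_k,Q^{(N)}(u\gradient\phi_l))-(\gradient\phi_k,\gradient\phi_l)\Big].$$
Since $f\in C_b^2$, the multipliers $F_k(u)$ and $F_{kl}(u)$ are uniformly bounded, so it suffices to show that each scalar pairing $(u,\tfrac12(L^{(N)}-\Delta)\phi_k)$ and each bracketed scalar factor tends to zero in $L^p(\mu)$ for every $p\in[1,\infty)$; the product with a bounded multiplier then converges by the trivial bound $\|F_k(u)g(u)\|_{L^p(\mu)}\le\|F_k\|_\infty\|g(u)\|_{L^p(\mu)}$. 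Throughout I treat $u$ as a white-noise variable with law $\mu$, which is legitimate since the solutions are stationary.

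For the drift term, condition \eqref{condition-diagonal} gives $A^{(N)}\to\identity$ in $C^\infty$, whence $L^{(N)}\phi_k\to\Delta\phi_k$, and in particular $(L^{(N)}-\Delta)\phi_k\to 0$ in $L^2(\manifold)$. Under $\mu$ the pairing $(u,\tfrac12(L^{(N)}-\Delta)\phi_k)$ is a centered Gaussian, so Proposition \ref{prop-wn-moment} gives that its $L^p(\mu)$-norm is a fixed constant times $\LtwoMnorm{(L^{(N)}-\Delta)\phi_k}$, which vanishes as $N\to\infty$. This disposes of the drift term for all $p\in[1,\infty)$.

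The diffusion term is the heart of the matter. I would recognise $(u\gradient\phi_k,Q^{(N)}(u\gradient\phi_l))=(\swhitenoise\boxtimes\swhitenoise,K^{(N)}_{kl})$ with kernel $K^{(N)}_{kl}(x,y)=\gradient\phi_k(x)\cdot Q^{(N)}(x,y)\cdot\gradient\phi_l(y)$; as the quadratic form only sees the symmetric part of the kernel, one may pass to the symmetrization without changing the value. Proposition \ref{prop-est-wn-quadratic} then decomposes this quantity into its mean plus a second-chaos fluctuation. The mean is the diagonal integral $\tr K^{(N)}_{kl}=\int_\manifold \gradient\phi_k\cdot A^{(N)}\gradient\phi_l\,\der x$, which converges to $\int_\manifold\gradient\phi_k\cdot\gradient\phi_l\,\der x=(\gradient\phi_k,\gradient\phi_l)$ by condition \eqref{condition-diagonal}. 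The fluctuation $(\swhitenoise\boxtimes\swhitenoise,K^{(N)}_{kl})-\tr K^{(N)}_{kl}$ has, by the second-chaos moment bound \eqref{moment second chaos}, $L^p(\mu)$-norm controlled up to a constant by $\hsnorm{K^{(N)}_{kl}}$; and the pointwise estimate $|K^{(N)}_{kl}(x,y)|\le|\gradient\phi_k(x)|\,|Q^{(N)}(x,y)|\,|\gradient\phi_l(y)|$ yields $\hsnorm{K^{(N)}_{kl}}\lesssim_{\phi_k,\phi_l}\hsnorm{Q^{(N)}}$, which tends to zero by condition \eqref{condition-hs}. Summing the two contributions shows $(u\gradient\phi_k,Q^{(N)}(u\gradient\phi_l))\to(\gradient\phi_k,\gradient\phi_l)$ in $L^p(\mu)$ for $p\ge 2$, and $p\in[1,2)$ follows since $\mu$ is a probability measure.

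The structural point worth emphasising is the clean separation of hypotheses: condition \eqref{condition-diagonal} governs the \emph{deterministic} diagonal limit (the mean, i.e. the trace), while condition \eqref{condition-hs} governs the \emph{fluctuation} (the Hilbert--Schmidt norm of the off-diagonal kernel). The only genuine bookkeeping I anticipate is verifying that symmetrizing $K^{(N)}_{kl}$ changes neither the trace nor the Hilbert--Schmidt estimate --- it does not, using $A^{(N)ij}=A^{(N)ji}$ and the kernel symmetry $Q^{(N)}(x,y)^{\top}=Q^{(N)}(y,x)$ --- and keeping the vector-bundle contractions consistent so that the pointwise bound on $K^{(N)}_{kl}$ holds. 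Neither is a real difficulty, so I expect the argument to go through cleanly.
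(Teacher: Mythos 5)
Your proposal is correct and takes essentially the same approach as the paper: the drift term is dispatched by Proposition \ref{prop-wn-moment} together with condition (\ref{condition-diagonal}), and the diffusion term by the mean/second-chaos decomposition of Proposition \ref{prop-est-wn-quadratic}, with condition (\ref{condition-diagonal}) giving convergence of the trace and condition (\ref{condition-hs}) killing the Hilbert--Schmidt fluctuation. The only cosmetic difference is that the paper reduces the off-diagonal terms $k\neq l$ to symmetric kernels by polarization (working with $\phi_k+\phi_l$ and $\phi_k-\phi_l$) rather than by symmetrizing $K^{(N)}_{kl}$ directly as you do; both devices are valid and your symmetrization checks (invariance of the trace and the Hilbert--Schmidt bound) go through.
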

\begin{proof}
	Let $\eta$ be a space white noise.
	As $F_k$, $F_{kl}$ are bounded, it suffices to show that
	\begin{equation}\label{-1cvgc}
	 (\eta,L^{ (N)}\phi_k)\longrightarrow (\eta, \Delta \phi_k)
	\end{equation} and
	\begin{equation}\label{-2cvgc}
	 (\eta\boxtimes\eta, Q^{ (N)}\cdot (\nabla\phi_k\boxtimes\nabla\phi_l))\longrightarrow  (\nabla\phi_k,\nabla\phi_l)
	\end{equation} in $L^p$ as $N$ goes to infinity.
	
	With Proposition \ref{prop-wn-moment}, we can get (\ref{-1cvgc})\: if $L^{ (N)}\phi_k$ converges in $L^2 (\manifold)$ to $\Delta \phi_k$, which is guaranteed by condition (\ref{condition-diagonal}).
	
	To get (\ref{-2cvgc}), it suffices to show for any $\phi\in C^\infty (\manifold)$,
	\begin{equation}\label{-3cvgc}
	 (\eta\boxtimes\eta,Q^{ (N)}\cdot  (\nabla\phi\boxtimes\nabla\phi))\longrightarrow (\nabla \phi,\nabla \phi),
	\end{equation}
	since
	\begin{align*}
	 (&\eta\boxtimes\eta,\nabla\phi_k\boxtimes\nabla\phi_l)\\= &\frac{\left ( \eta \boxtimes \eta, \big (\nabla (\phi_k+\phi_l)\boxtimes\nabla (\phi_k+\phi_l)-\nabla (\phi_k-\phi_l)\boxtimes\nabla (\phi_k-\phi_l)\big) \right)}{4}.
	\end{align*} 
	
	By Proposition \ref{prop-est-wn-quadratic}, 
	\[
	\Expectation[ (\eta\boxtimes\eta, Q^{ (N)}\cdot (\nabla\phi\boxtimes\nabla\phi))]=\int_{\manifold}\nabla\phi (x)\cdot A^{ (N)} (x)\nabla\phi (x)\der x. \]
	By condition (\ref{condition-diagonal}), the expectation converges to the desired limit $ (\nabla\phi,\nabla\phi)$.
	
	Now we need to control the $p$-th moment with the help of Proposition \ref{prop-est-wn-quadratic}. Let  $$E_N:=\Expectation[ (\eta\boxtimes\eta, Q^{ (N)}\cdot (\nabla\phi\boxtimes\nabla\phi))].$$
    Then
	\begin{equation}\label{pf-eq-01}
	\Expectation| (\eta\boxtimes\eta, Q^{ (N)}\cdot (\nabla\phi\boxtimes\nabla\phi))-E_N|^p \lesssim_p \int_{\manifold\times\manifold} \big|\nabla\phi (x)\cdot Q^{ (N)} (x,y)\nabla\phi (y)\big|^2\der x \der y
	\end{equation}
	Now we control the RHS:
	\begin{align*}
	&\int_{\manifold\times\manifold} \big|\nabla\phi (x)\cdot Q^{ (N)} (x,y)\nabla\phi (y)\big|^2\der x \der y
	\\ \leq&\int_{\manifold\times\manifold} \lVert\nabla\phi \rVert^2_{L^\infty} |Q^{ (N)} (x,y)|^2 \der x \der y
	 \\ = & \lVert\nabla\phi \rVert^2_{L^\infty} \|Q^{ (N)}\|_{L^2 (\manifold\times\manifold, \tanbdl\boxtimes\tanbdl)}.
	\end{align*}
	By condition (\ref{condition-hs}), the $L^2$ norm of $Q^{N}$ goes to zero. Therefore, by (\ref{pf-eq-01}), we know that $ (\eta\boxtimes\eta, Q^{ (N)}\cdot (\nabla\phi\boxtimes\nabla\phi))-E_N$ converges to zero in $L^p$. Together with  the convergence of $E_N$ to $ (\nabla\phi,\nabla\phi)$, we prove the convergence of (\ref{-3cvgc}) in $L^p$, which implies (\ref{-2cvgc}).
	
\end{proof}

\begin{rmk}
	Note that when $u$ is in $L^2 (\manifold)$, $$\generator{N}F (u)\rightarrow \sum_{k=1}^{n}F_k (u,\frac{1}{2}\Delta \phi_k) $$ as $N\longrightarrow\infty$, since $$ (u\nabla\phi_k,Q (u\nabla\phi_l))\leq \LtwoMnorm{u}\lVert \nabla \phi_k \rVert_{L^\infty}
	\lVert \nabla \phi_l \rVert_{L^\infty} \opnorm{Q^{ (N)}}$$ goes to zero as $N$ goes to infinity.
	
	It does not contradict Theorem \ref{thm-generator-cvgc} because $L^2 (\manifold)$ is a null set with respect to the law of space white noise.
	
	The difference of the generator limits in these two cases implies different behaviors of the limits of the solutions. 
\end{rmk}

\subsubsection{Convergence of solutions}

We keep the notations of previous subsubsection. Before proving the convergence of white noise solutions, let us take a look at the candidate limit determined by the limit generator.

\begin{lma}\label{lma: uniqueness of martingale solution}
    Let $\alpha<-d/2, T>0$. Suppose $\ulimit\in C([0,T],H^\alpha(\manifold))$ solves the following martingale problem for $\generator{\infty}$ i.e., for any $F\in\cylindricalFunctions$, the process $M^F_t$ defined by
\begin{equation}\label{mtg-limit}
M^F_t=F (\ulimit_t)-F (\ulimit_0)-\int_{0}^{t}\generator{\infty}F (\ulimit_s)\der s
\end{equation} is a martingale and the distribution of $\ulimit_0$ is space white noise. Then the distribution of $\ulimit$ is uniquely determined and given by the solution to the following stochastic heat equation:
\begin{equation}
\label{eq-wn-limit}
\dfrac{\partial}{\partial t} \ulimit = \frac{1}{2}\Delta\ulimit +  (-\Delta)^{1/2}\stwhitenoise,
\end{equation}
where $\stwhitenoise$ is a space-time white noise.
\end{lma}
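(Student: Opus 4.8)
The plan is to diagonalize the limiting generator $\generator{\infty}$ in the spectral basis of the Laplace--Beltrami operator, thereby reducing the infinite-dimensional martingale problem \eqref{mtg-limit} to a decoupled family of one-dimensional Ornstein--Uhlenbeck martingale problems, each of which is classically well posed. Let $\{e_n\}_{n\geq 0}$ be an orthonormal basis of $L^2(\manifold)$ consisting of eigenfunctions of $-\Delta$ with eigenvalues $0=\lambda_0\leq\lambda_1\leq\cdots$; these are smooth, hence admissible test functions, and the coordinates $X^n_t:=(\ulimit_t,e_n)$ are well defined and continuous in $t$ because $\ulimit\in C([0,T],H^\alpha)$. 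First I would feed the cylindrical functionals $F(u)=f\big((u,e_{n_1}),\dots,(u,e_{n_k})\big)$ into the martingale problem and simplify using $\tfrac12\Delta e_n=-\tfrac12\lambda_n e_n$ together with the integration-by-parts identity $(\nabla e_m,\nabla e_n)=(e_m,-\Delta e_n)=\lambda_n\delta_{mn}$.

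This computation shows that $\generator{\infty}$ acts on such $F$ exactly as the generator of $k$ \emph{independent} one-dimensional Ornstein--Uhlenbeck processes: the drift of $X^n$ is $-\tfrac12\lambda_n X^n$ and the diffusion matrix is diagonal with entries $\lambda_{n_j}$, the off-diagonal covariations vanishing because $(\nabla e_m,\nabla e_n)=0$ for $m\neq n$. Hence $(X^{n_1},\dots,X^{n_k})$ is a continuous solution of the martingale problem for $\sum_j\big(-\tfrac12\lambda_{n_j}x_j\partial_{x_j}+\tfrac12\lambda_{n_j}\partial^2_{x_j}\big)$, a finite-dimensional linear diffusion with globally Lipschitz coefficients whose martingale problem is classically well posed. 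Its unique solution is a vector of independent OU processes; for the frozen mode $\lambda_0=0$ the generator vanishes, forcing $X^0_t\equiv X^0_0$. Since this holds for every finite collection of indices, the law of the whole family $(X^n_\cdot)_{n\geq0}$ is uniquely determined, namely that of independent processes solving $dX^n=-\tfrac12\lambda_n X^n\,dt+\sqrt{\lambda_n}\,dB^n$ with, because $\ulimit_0$ is white noise, i.i.d. initial data $X^n_0\sim N(0,1)$.

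Next I would promote this to uniqueness of the law of $\ulimit$ on $C([0,T],H^\alpha)$. The evaluations $u\mapsto(u,e_n)$ are continuous linear functionals on $H^\alpha$ that separate points (the $e_n$ are total), so the Borel $\sigma$-algebra of $C([0,T],H^\alpha)$ is generated by the family $\{u_\cdot\mapsto(u_t,e_n)\}_{n,t}$; consequently the joint laws of all finite subfamilies determine the law of $\ulimit$, which is therefore unique. To identify it with \eqref{eq-wn-limit}, note that writing that SPDE in the eigenbasis yields precisely $dX^n=-\tfrac12\lambda_n X^n\,dt+\sqrt{\lambda_n}\,dB^n$, since $(-\Delta)^{1/2}$ contributes the factor $\sqrt{\lambda_n}$ to the $n$-th independent component of the space--time white noise $\stwhitenoise$, while white noise initial data give $X^n_0\sim N(0,1)$. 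Thus the spectral representation of the mild solution of \eqref{eq-wn-limit} carries exactly the law found above, proving both assertions.

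The main obstacle I anticipate is the measure-theoretic passage from the decoupled finite-dimensional marginals to the law on the path space: one must verify that the cylindrical $\sigma$-algebra generated by the spectral evaluations coincides with the Borel $\sigma$-algebra (using separability of $H^\alpha$ and continuity of paths), and that the reconstructed sequence of independent OU processes indeed lives in $C([0,T],H^\alpha)$ with the correct weights $(1+\lambda_n)^\alpha$ for $\alpha<-d/2$, which is precisely where the summability furnished by Proposition \ref{prop: finite trace} enters. The remaining inputs---well-posedness of the finite-dimensional OU martingale problem and the spectral form of \eqref{eq-wn-limit}---are classical.
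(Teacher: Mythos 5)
Your proof is correct, but it takes a genuinely different route from the paper's. The paper does not diagonalize: it inserts truncated cylindrical functions $f_n((u,\phi))$ and $f_n((u,\phi))f_n((u,\varphi))$ (with cutoffs $f_n$, since members of $\cylindricalFunctions$ must be bounded with bounded derivatives) into the martingale problem to show that $\tilde W_t(\phi)=(\ulimit_t,\phi)-(\ulimit_0,\phi)-\tfrac12\int_0^t(\ulimit_s,\Delta\phi)\,\der s$ is a continuous martingale with quadratic covariation $\langle \tilde W(\phi),\tilde W(\varphi)\rangle_t=t(\nabla\phi,\nabla\varphi)$; by a L\'evy-type characterization, $\tilde W$ is then an adapted generalized Wiener process with covariance operator $-\Delta$, so $\ulimit$ solves the additive-noise equation \eqref{eq-wn-limit} driven by $\tilde W$, and the law is pinned down by weak uniqueness and the mild-solution representation from da Prato--Zabczyk. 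You instead restrict the martingale problem to the spectral coordinates $X^n=(\ulimit,e_n)$, obtaining decoupled finite-dimensional Ornstein--Uhlenbeck martingale problems whose well-posedness is classical (Lipschitz coefficients, Stroock--Varadhan), and then reconstruct the law on $C([0,T],H^\alpha)$ via the coincidence of the cylindrical and Borel $\sigma$-algebras. Your route trades the infinite-dimensional inputs (Hilbert-space L\'evy characterization, weak uniqueness for SPDEs) for finite-dimensional classical theory plus a measure-theoretic gluing step, and it makes the product structure — independent OU modes, frozen zero mode, stationarity of white noise — completely explicit; the paper's route is shorter, avoids both the spectral decomposition and the reconstruction, and does not rely on discreteness of the spectrum, so it would survive in settings one cannot (or does not want to) diagonalize. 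One point you should make explicit when writing this up: since $\cylindricalFunctions$ only admits $f\in C_b^2$, the finite-dimensional martingale problem you inherit is the one with $C_b^2$ (in particular $C_c^\infty$) test functions; that class suffices for the classical uniqueness theorem, so there is no gap, but the restriction (or a truncation as in the paper) deserves a sentence.
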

\begin{proof}
    Let $f_n \in C_b^2(\mathbb{R})$, such that $f_n=1$ on $[-n,n]$ and $|f_n'|\leq1,|f_n''|\leq 1.$. Take $F^n(u)=f_n((u,\phi))\in\cylindricalFunctions$ in the martingale problem. Since $F^n,\generator{\infty}F^n$ are uniformly bounded in $L^2(\mu)$, as $n$ goes to infinity, we get that 
    \[\tilde{W}_t(\phi)=(\ulimit_t,\phi)-(\ulimit_0,\phi)-\int_0^t (\ulimit_s,\Delta \phi)\der s\] is a continuous martingale.
    Now we compute their quadratic variations. Let $\varphi$ also be a smooth function. 
    By It\^o's formula,
    \begin{align*}
        &(\ulimit_t,\phi)(\ulimit_t,\varphi)-(\ulimit_0,\phi)(\ulimit_0,\varphi)
        \\&=\int_0^t \frac{1}{2}\left((\ulimit_s,\phi)(\ulimit_s,\Delta\varphi)+(\ulimit_s,\phi)(\ulimit_s,\Delta\varphi)\right)\der s+\quavar{\tilde{W}(\phi),\tilde{W}(\varphi)}_t
        \\&+\int_0^t\frac{1}{2}\left( (\ulimit_s,\Delta\phi)\der \tilde{W}_s(\varphi) +(\ulimit_s,\Delta\varphi)\der \tilde{W}_s(\phi) \right).
    \end{align*}
    
    Similarly by taking $F^n(u)=f_n((u,\phi))f_n((u,\varphi))$,
    we know that 
    \begin{align*}
        &(\ulimit_t,\phi)(\ulimit_t,\varphi)-(\ulimit_0,\phi)(\ulimit_0,\varphi)
        \\&-\int_0^t \frac{1}{2}\left((\ulimit_s,\phi)(\ulimit_s,\Delta\varphi)+(\ulimit_s,\phi)(\ulimit_s,\Delta\varphi)+(\nabla \phi, \nabla \varphi)\right)\der s
    \end{align*}
    is a  martingale.
    Therefore, $\quavar{\tilde{W}(\phi),\tilde{W}(\varphi)}_t=t(\nabla \phi, \nabla \varphi)=t(\phi,\Delta \varphi)$. $\tilde{W}_t=\ulimit_t-\ulimit_0+\frac{1}{2}\int_0^t\Delta u_s\der s$ is an adapted generalized Wiener process with covariance operator $\Delta$. $\partial_t\tilde{W}$ has the same law with $(-\Delta)^{1/2}\xi$, where $\xi$ is a space time white noise. Therefore, $\ulimit$ can be given as a solution to (\ref{eq-wn-limit}). $\tilde{W}$ is in $H^{\alpha}(\manifold)$ by Proposition \ref{prop: finite trace} and $\Delta$ generates a contraction semigroup on $H^\alpha(\manifold)$, by \cite{da2014stochastic}, we have weak uniqueness and $\ulimit_t=\int_0^te^{(t-s)\Delta}\der \tilde{W}_s+\ulimit_0$.  Therefore, the law of $\ulimit$ is uniquely determined.
    
\end{proof}

\begin{thm}
	Suppose $T>0,$ $ \alpha \in [0,1/2)$, $\beta<-\frac{d}{2}-2\alpha$ and the noises satisfy conditions (\ref{condition-diagonal}) and (\ref{condition-l2}). Then the  white noise solutions $u^{ (N)}$ driven by noises with covariant matrices $Q^{ (N)}$ on $[0,T]$ converge in distribution to the white noise solution to (\ref{eq-wn-limit}) as processes in $C^{\alpha} ([0,T],H^{\beta})$.
\end{thm}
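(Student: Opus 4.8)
The plan is to follow the classical tightness-plus-martingale-problem route. First I would establish tightness of the laws of $u^{(N)}$ in $C^{\alpha}([0,T],H^\beta)$. Fix $\alpha'\in(\alpha,1/2)$ and $\varepsilon>0$ small enough that $-\tfrac{d}{2}-2\alpha'-\varepsilon>\beta$. Since the noises undergo a proper scaling, condition (\ref{condition-diagonal}) forces $A^{(N)}$ and $(A^{(N)})^{-1}$ to be uniformly bounded in every $C^k$ norm, so the constant in Corollary \ref{cor-holder-moment} can be taken uniform in $N$. This yields a uniform bound $\sup_N \Expectation \| u^{(N)}\|_{C^{\alpha'}([0,T],H^{-d/2-2\alpha'-\varepsilon})}^p < \infty$. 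Because $C^{\alpha'}([0,T],H^{-d/2-2\alpha'-\varepsilon})$ embeds compactly into $C^{\alpha}([0,T],H^\beta)$, Markov's inequality gives tightness, and by Prokhorov's theorem the family is relatively compact in distribution.

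Next I would identify every subsequential limit. Passing to a subsequence, and realizing the convergence almost surely via Skorokhod representation, let $\ulimit$ be a limit point; note that $\beta<-d/2$ (as $\alpha\ge 0$), so $\ulimit\in C([0,T],H^\beta)$ is of the regularity required by Lemma \ref{lma: uniqueness of martingale solution}. I claim $\ulimit$ solves the martingale problem for $\generator{\infty}$ with white-noise initial law. The initial law is immediate: each $u^{(N)}_0$ is white noise, and evaluation at $t=0$ is continuous on $C^{\alpha}([0,T],H^\beta)$, so $\ulimit_0$ is white noise. For the martingale property, fix $F\in\cylindricalFunctions$, times $0\le s_1<\dots<s_m\le s<t$, and a bounded continuous $g$ on $(H^\beta)^m$; by Proposition \ref{generator} each $M^{F,(N)}$ is a martingale, so
\begin{equation*}
\Expectation\!\left[\bigl(M^{F,(N)}_t-M^{F,(N)}_s\bigr)\, g\bigl(u^{(N)}_{s_1},\dots,u^{(N)}_{s_m}\bigr)\right]=0.
\end{equation*}
I would pass to the limit term by term. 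The values $F(u^{(N)}_\cdot)$ converge since $F$ is bounded and continuous on $H^\beta$. For the drift I split $\int_s^t\generator{N}F(u^{(N)}_r)\der r$ into $\int_s^t[\generator{N}F-\generator{\infty}F](u^{(N)}_r)\der r$ and $\int_s^t\generator{\infty}F(u^{(N)}_r)\der r$.

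The crucial ingredient in both remaining pieces is stationarity: because white noise is invariant for the stochastic transport equation, $u^{(N)}_r$ has law $\mu$ for every $r$ and $N$. Hence the first integral is controlled, uniformly in $r$, by the $L^1(\mu)$-distance $\|\generator{N}F-\generator{\infty}F\|_{L^1(\mu)}$, which tends to zero by Theorem \ref{thm-generator-cvgc}; integrating over $[s,t]$ shows this piece vanishes in $L^1$. For the second integral, $\generator{\infty}F$ is continuous on $H^\beta$ (all test functions are smooth), so $\generator{\infty}F(u^{(N)}_r)\to\generator{\infty}F(\ulimit_r)$ along the almost surely convergent subsequence; the linear pairings appearing in $\generator{\infty}F$ have uniformly bounded $p$-th $\mu$-moments by Proposition \ref{prop-wn-moment}, giving uniform integrability, so Vitali's theorem lets me pass the limit inside the time integral. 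Combining these identifies $\ulimit$ as a solution of the martingale problem for $\generator{\infty}$. Finally, Lemma \ref{lma: uniqueness of martingale solution} asserts that this martingale problem with white-noise initial data has a unique law, namely that of the stationary solution to (\ref{eq-wn-limit}); every subsequential limit therefore has this law, so the whole sequence $u^{(N)}$ converges in distribution to it. I expect the main obstacle to be the limit passage in the drift: both $\generator{N}F$ and $\generator{\infty}F$ are unbounded functionals on $H^\beta$ (through the linear terms $(u,\cdot)$), so naive continuity is insufficient; the resolution hinges on exploiting stationarity to keep $u^{(N)}_r$ distributed as $\mu$ at every time, thereby converting the generator convergence of Theorem \ref{thm-generator-cvgc} and the Gaussian moment bounds of Proposition \ref{prop-wn-moment} into the uniform integrability needed for Vitali's theorem.
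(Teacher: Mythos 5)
Your proposal is correct and follows essentially the same route as the paper: tightness from the uniform-in-$N$ H\"older moment bounds of Corollary \ref{cor-holder-moment} (uniformity via condition (\ref{condition-diagonal})), Prokhorov plus Skorokhod representation, identification of subsequential limits through the martingale problem by splitting the drift and using stationarity of the white noise law together with Theorem \ref{thm-generator-cvgc} for the generator difference and Gaussian moment bounds for uniform integrability, and finally uniqueness from Lemma \ref{lma: uniqueness of martingale solution}. The only cosmetic difference is that you test the martingale property against functions of finitely many time marginals while the paper tests against bounded continuous functionals of the path on $[0,s]$; these are equivalent formulations.
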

\begin{proof}
	\newcommand{\calphahbeta}{C^{\alpha} ([0,T],H^{\beta})}
	We use a tightness argument to get the convergence result. We first show the tightness of the processes, and then for any subsequence we can take a convergent sub-subsequence. Then we show that the limit satisfies the martingale problem (\ref{mtg-limit}). Therefore, it must be stationary solution to (\ref{eq-wn-limit}). Therefore, the series converge to the universal limit.
	
	Note that for $0\leq\alpha_1<\alpha_2<1/2$ and  $\beta_1<\beta_2$. $C^{\alpha_1} ([0,T],H^{\beta_1})$ is compactly embedded in $C^{\alpha_2} ([0,T],H^{\beta_2})$. Therefore, the tightness can be derived from moment bound of H\"older norm  (Corollary \ref{cor-holder-moment} of the Theorem \ref{thm-regularity-moment}). The constant can be chosen uniformly in $N$, as condition  \eqref{condition-diagonal} guarantees that $A^{(N)}$ and $(A^{(N)})^{-1}$ are uniformly bounded in the $C^k$ norm for any integer $k$. 
	
	Then by Prokhorov's theorem, we know that for any subsequence of $u^{ (N)}$, there is a convergent subsubsequence. $\tilde u^{ (N_n)}$ satisfies the martingale problem  with generator $\generator{N_n}$ with respect to its natural filtration.

	To show that the limit satisfies the martingale problem (\ref{mtg-limit}) with respect to the natural filtration, it suffices to show that for any $F\in \cylindricalFunctions, s\in (0,T)$, $\varphi\in C_b\left (C^\alpha ([0,s],H^\beta)\right)$,$t\in[s,T]$,
	\begin{equation}
	\newexp\left[ \left (F (\tilde{u}^{ (\infty)}_t)-F (\tilde{u}^{ (\infty)}_s)-\int_{s}^{t} \generator{\infty} F (\tilde{u}^{ (\infty)}_r) \der r \right) \varphi  (\tilde{u}^{ (\infty)}) \right]=0.
	\end{equation}
	
	Since $\tilde{u}^{ (N_n)}_t$ satisfies the martingale problem with generator $\generator{N_n}$ with respect to the filtration $\tilde{F}^n_t$,
	\begin{equation}
	\newexp\left[ \left (F (\tilde{u}^{ (N_n)}_t)-F (\tilde{u}^{ (N_n)}_s)-\int_{s}^{t} \generator{N_n} F (\tilde{u}^{ (N_n)}_r) \der r \right) \varphi  (\tilde{u}^{ (N_n)}) \right]=0.
	\end{equation}
	
	Therefore,
	\begin{align*}
	&\left|\newexp\left[\left (F (\tilde{u}^{ (N_n)}_t)-F (\tilde{u}^{ (N_n)}_s)-\int_{s}^{t} \generator{\infty} F (\tilde{u}^{ (N_n)}_r) \der r \right) \varphi  (\tilde{u}^{ (N_n)}) \right]\right|
	\\ =
	& \left|\newexp \left (\int_{s}^{t}  (\generator{N_n}-\generator{\infty}) F (\tilde{u}^{ (N_n)}_r) \der r \right) \varphi  (\tilde{u}^{ (N_n)}) \right|
	\\
	\leq& \lVert \varphi \rVert_{L^\infty} \int_{s}^{t} \newexp| (\generator{N_n}-\generator{\infty}) F (\tilde{u}^{ (N_n)}_r) |\der r
	\\ =& \lVert \varphi \rVert_{L^\infty}  (t-s) \lVert ( \generator{N_n}-\generator{\infty}) F \rVert_{L^1 (\mu)}.
	\end{align*}
	
	By Theorem \ref{thm-generator-cvgc}, $\lVert \generator{N_n}-\generator{\infty}) F \rVert_{L^1 (\mu)}$ goes to zero, so
	$$ \newexp\left[\left (F (\tilde{u}^{ (N_n)}_t)-F (\tilde{u}^{ (N_n)}_s)-\int_{s}^{t} \generator{\infty} F (\tilde{u}^{ (N_n)}_r) \der r \right) \varphi  (\tilde{u}^{ (N_n)}) \right] \longrightarrow 0, $$ as $n$ goes to infinity.
	
	It remains to show that $$\left (F (\tilde{u}^{ (N_n)}_t)-F (\tilde{u}^{ (N_n)}_s)-\int_{s}^{t} \generator{\infty} F (\tilde{u}^{ (N_n)}_r) \der r \right) \varphi  (\tilde{u}^{ (N_n)})$$ converges in $L^1 (\newprob)$ to $$\left (F (\tilde{u}^{ (\infty)}_t)-F (\tilde{u}^{ (\infty)}_s)-\int_{s}^{t} \generator{\infty} F (\tilde{u}^{ (\infty)}_r) \der r \right) \varphi  (\tilde{u}^{ (\infty)}).$$
	
	The almost sure convergence follows from the almost sure convergence of $\tilde{u}^{ (N_n)}$ to $\tilde{u}^{ (\infty)}$. Therefore, we only need to show the uniform integrability, which can be obtained by uniform bound in $L^p (\newprob)$ for some $p>1$. As $F$ and $\varphi$ are bounded, all we need to get is the uniform $L^p (\newprob)$ boundedness of $\int_s^t\generator{\infty} F (\tilde{u}^{ (N_n)}_r) \der r$. By Minkowski's inequality, 
	\begin{align*}
	\left\lVert\int_s^t\generator{\infty} F (\tilde{u}^{ (N_n)}_r) \der r\right\rVert_{L^p (\newprob)}
	\leq& \int_s^t\left\lVert \generator{\infty} F (\tilde{u}^{ (N_n)}_r)\right\rVert_{L^p (\newprob)} \der r
	\\=& (t-s)\lVert \generator{\infty} F \rVert_{L^p (\mu)},
	\end{align*}
	since for each $r\in[0,T]$, $\tilde{u}^{ (N_n)}_r$ has the distribution of white noise. As $\generator{\infty} (\eta)$  (where $\eta$ is a space white noise) is a finite sum of some bounded functions times polynomials of Gaussian random variables, $\lVert \generator{\infty} F \rVert_{L^p (\mu)}$ is finite due to Gaussian hypercontractivity. Hence, we get the $L^1$ convergence of $$\left (F (\tilde{u}^{ (N_n)}_t)-F (\tilde{u}^{ (N_n)}_s)-\int_{s}^{t} \generator{\infty} F (\tilde{u}^{ (N_n)}_r) \der r \right) \varphi  (\tilde{u}^{ (N_n)})$$ to $$\left (F (\tilde{u}^{ (\infty)}_t)-F (\tilde{u}^{ (\infty)}_s)-\int_{s}^{t} \generator{\infty} F (\tilde{u}^{ (\infty)}_r) \der r \right) \varphi  (\tilde{u}^{ (\infty)}).$$  Therefore, $\tilde u^{ (\infty)}$ solves the martingale problem with the generator $\generator{\infty}$, and its initial distribution is a space white noise. By Lemma \ref{lma: uniqueness of martingale solution}, the distribution of the limit is uniquely determined by the martingale problem, together with tightness, we get the convergence in distribution for the whole sequence in $\calphahbeta$.  
	
\end{proof}

\subsection{Convergence to deterministic heat equation with regular initial data}\label{s-cvg-regular}

We can analogously use the tightness argument and the convergence of generator to show that with fixed $L^2$ initial data, the solutions to the stochastic transport equations converge to the deterministic heat equation. However, this method does not give any information about the convergence rate. In this subsection, we will give some quantitative estimates on the speed of convergence, which are proved for stochastic transport equations on a torus in \cite{flandoli2024quantitative}. 

Now we consider a solution to the stochastic heat equation driven by noise of covariance $Q$ with deterministic  initial value $u_0\in L^2 (\manifold)$. We give some quantitative bounds to the process $v_t$ of the difference between $u_t$ and its expectation (the solution to the heat equation) $v_t=u_t-P_t u_0$, where $P_t$ is the semigroup generated by $\frac{1}{2}L=\frac{1}{2}\nabla \cdot (A \nabla)$. With these quantitative bounds, we can show $v^{ (N)}$, which is similarly defined as $v^{ (N)}_t= u_t^{ (N)}-P^{ (N)}_t u_0$, converges to zero as $N$ goes to infinity. 

By the mild formulation, 
\begin{equation}
v_t=\int_{0}^{t}P_{t-s} (\nabla u_s \cdot \der W_s).
\end{equation}

First we show that $v_t$ is small in the weak sense:
\begin{thm}\label{thm-qt-weak-bd}
	For every $\phi\in L^2 (\manifold)$, and $t\geq 0$,\begin{equation}\label{eq-l2-weakbd}
	\Expectation[ (v_t,\phi)^2] \leq \frac {C (A)}{2} \opnorm{Q} \lpmani{u_0}{\infty}^2  \LtwoMnorm{\phi}^2,
	\end{equation}
	with $$C (A) =\dfrac{1}{\inf\limits_{\substack{X\in L^2 (\manifold,\tanbdl),\\ \lVert X \rVert_{L^2}=1 }} (X,AX)}.$$
	
	Moreover, for any $p\in (1,\infty)$, 
	\begin{equation}\label{eq-lp-weakbd}
	\Expectation[ (v_t,\phi)^p]^{1/p} \lesssim_p  (C (A)/2)^{1/2} \opnorm{Q}^{1/2} \lpmani{u_0}{\infty}  \LtwoMnorm{\phi}.
	\end{equation}
\end{thm}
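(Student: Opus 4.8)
Starting from the mild formulation (\ref{eq-mild}) with $s=0$ we have $v_t=\int_0^t P_{t-s}(\nabla u_s\cdot\der W_s)$, and the plan is to reduce the estimate to a scalar stochastic integral, apply the It\^o isometry, and then close with an energy (dissipation) identity for the semigroup $P_t$ in which the coercivity constant of $A$ produces $C(A)$. First I would pair with a fixed $\phi\in L^2(\manifold)$, use that $P_{t-s}$ is self-adjoint, and invoke the divergence-free property $\divergence W=0$ (so that $\nabla u_s\cdot\der W_s=\divergence(u_s\der W_s)$) to rewrite
\begin{equation*}
(v_t,\phi)=\int_0^t(\nabla u_s\cdot\der W_s,P_{t-s}\phi)=-\int_0^t(u_s\nabla P_{t-s}\phi,\der W_s).
\end{equation*}
The \emph{crucial} feature of this rewriting is that the gradient is transferred off the merely square-integrable factor $u_s$ and onto the smooth factor $P_{t-s}\phi$ (smoothing for $s<t$), so the final bound will involve no derivative of $u_0$, only $\lpmani{u_0}{\infty}$.

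For $p=2$, the It\^o isometry (equivalently the quadratic variation formula (\ref{quav-test})) gives
\begin{equation*}
\Expectation[(v_t,\phi)^2]=\Expectation\int_0^t\big(u_s\nabla P_{t-s}\phi,\,Q(u_s\nabla P_{t-s}\phi)\big)\der s.
\end{equation*}
Bounding $Q$ by its operator norm and using that $u_s=(X_s^{-1})^\ast u_0$, whence $\lpmani{u_s}{\infty}=\lpmani{u_0}{\infty}$ almost surely, the integrand is dominated pointwise by the deterministic quantity $\opnorm{Q}\lpmani{u_0}{\infty}^2\LtwoMnorm{\nabla P_{t-s}\phi}^2$.

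The remaining step, which I expect to be the \emph{main obstacle}, is the energy estimate $\int_0^t\LtwoMnorm{\nabla P_{t-s}\phi}^2\,\der s\le C(A)\LtwoMnorm{\phi}^2$. Here I would use the dissipation identity for $P_r=e^{\frac12 rL}$, namely $\frac{\der}{\der r}\LtwoMnorm{P_r\phi}^2=(LP_r\phi,P_r\phi)=-(\nabla P_r\phi,A\nabla P_r\phi)$, which integrates (after the change of variable $r=t-s$) to $\int_0^t(\nabla P_r\phi,A\nabla P_r\phi)\der r=\LtwoMnorm{\phi}^2-\LtwoMnorm{P_t\phi}^2\le\LtwoMnorm{\phi}^2$; combined with the coercivity $\LtwoMnorm{X}^2\le C(A)(X,AX)$, which is exactly the definition of $C(A)$, this yields the claim and hence (\ref{eq-l2-weakbd}).

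Finally, for general $p\in(1,\infty)$ I would apply Proposition \ref{prop moment estimate via quadratic} to the scalar integral $-\int_0^t(u_s\nabla P_{t-s}\phi,\der W_s)$: its quadratic variation density $(u_s\nabla P_{t-s}\phi,Q(u_s\nabla P_{t-s}\phi))$ is almost surely dominated by the same deterministic quantity as above (because $u_0$ is deterministic and $\lpmani{u_s}{\infty}$ is conserved), so its $L^{p/2}(\prob)$ norm is controlled with no further stochastic analysis and the $\der s$-integral is estimated exactly as in the $p=2$ case, giving (\ref{eq-lp-weakbd}). The only genuinely delicate points are the precise bookkeeping of the numerical constant coming from the dissipation identity and checking that the rewriting via $\divergence W=0$ is legitimate at the level of the (rough) $L^2(\manifold)$-valued integrand; both are handled by the facts already recorded for (\ref{quav-test}) and (\ref{eq-mild}).
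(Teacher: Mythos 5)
Your proposal follows essentially the same route as the paper's proof: the paper also fixes $t$, considers the martingale $w_s=\int_0^s P_{t-r}(\nabla u_r\cdot\der W_r)$ (whose pairing with $\phi$ is exactly your scalar integral $-\int_0^s(u_r\nabla P_{t-r}\phi,\der W_r)$, obtained by the same self-adjointness and divergence-free integration by parts), bounds the quadratic variation density by $\opnorm{Q}\lpmani{u_0}{\infty}^2\lVert\nabla P_{t-r}\phi\rVert_{L^2}^2$ using conservation of the $L^\infty$ norm along the measure-preserving flow, closes with the coercivity of $A$ together with the semigroup dissipation identity, and then invokes a BDG-type bound for general $p$. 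The one discrepancy is the numerical constant, which you yourself flagged as the delicate bookkeeping point: your (correctly stated) identity $\frac{\der}{\der r}\LtwoMnorm{P_r\phi}^2=(LP_r\phi,P_r\phi)=-(\nabla P_r\phi,A\nabla P_r\phi)$, valid for $P_r=e^{rL/2}$, yields $\int_0^t\LtwoMnorm{\nabla P_{t-s}\phi}^2\der s\le C(A)\LtwoMnorm{\phi}^2$ and hence the bound with constant $C(A)$, not $\frac{C(A)}{2}$ as claimed in the statement. The paper reaches $\frac{C(A)}{2}$ only through the step $-(P_{t-r}\phi,LP_{t-r}\phi)=\frac{1}{2}\frac{\der}{\der r}\LtwoMnorm{P_{t-r}\phi}^2$, which is off by a factor of $2$ for the semigroup generated by $\frac{1}{2}L$ (the correct identity is the one you wrote). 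So your argument proves the inequality with $C(A)$ in place of $\frac{C(A)}{2}$, which is also what the paper's own argument actually delivers once this slip is corrected; the extra factor $\frac{1}{2}$ in the statement is not attainable by this method, and the discrepancy originates in the paper, not in your reasoning. One last cosmetic point: Proposition \ref{prop moment estimate via quadratic} is stated only for $p\ge 2$, so for $p\in(1,2)$ you should either reduce to $p=2$ by Jensen's inequality or cite Burkholder--Davis--Gundy directly, as the paper does.
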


\begin{proof}
	We only need to prove for smooth $\phi,u_0$ and the general case follows from the density argument.
	
	For fixed $t$, let us consider the following martingale $w_s=\int_{0}^{s}P_{t-r} (\nabla u_r \cdot \der W_r)$. Then $w_t=v_t$, $w_0=0$.
	\begin{align*}
	\der  (w_r,\phi)=& (P_{t-r} \nabla u_r \cdot \der W_r, \phi)
	\\ =& (\nabla u_r \cdot \der W_r, P_{t-r} \phi)
	\\ =&- (u_r \cdot \der W_r, \nabla  P_{t-r} \phi)
	\\= &-\int_{\manifold} u_r (x) W^i_r (x) \partial_i  (P_{t-r} \phi) (x)\der x.
	\end{align*}
	\begin{align*}
	&\dfrac{\der}{\der r} \quavar{ (w,\phi)}_r \\= &\int_{\manifold\times\manifold} Q^{ij} (x,y)\partial_i  (P_{t-r} \phi) (x)\partial_j  (P_{t-r} \phi) (y)u_r (x)u_r (y)\der x \der y
	\\= & (u_r \nabla P_{t-r}\phi, Q  (u_r\nabla P_{t-r}\phi))
	\\ \leq & \opnorm{Q} \lpmani{u_r}{\infty}^2 \lVert \nabla P_{t-r}\phi \rVert_{L^2}^2
	\\ \leq & C(A) \opnorm{Q} \lpmani{u_0}{\infty}^2  (\nabla P_{t-r}\phi, A\nabla P_{t-r}\phi)
	\\ = & - C(A) \opnorm{Q}\lpmani{u_0}{\infty}^2 (P_{t-r}\phi,L P_{t-r}\phi)
	\\ = &  \frac{C(A)}{2} \opnorm{Q}\lpmani{u_0}{\infty}^2\frac{\der}{\der r} \LtwoMnorm{P_{t-r}\phi}^2.
	\end{align*}
	
	Therefore,
	\begin{align*}
	    \quavar{ (w,\phi)}_t\leq & \frac{C(A)}{2} \opnorm{Q}\lpmani{u_0}{\infty}^2 (\LtwoMnorm{\phi}^2-\LtwoMnorm{P_t\phi}^2)
        \\ 
        \leq&\frac{C(A)}{2}  \opnorm{Q}\lpmani{u_0}{\infty}^2 \LtwoMnorm{\phi}^2.
	\end{align*}

	Since $v_t=w_t$, and the fact that the second moment of a martingale  (starting from 0) at time $t$ equals to the expectation of its quadratic variation at time $t$, we get (\ref{eq-l2-weakbd}). With Burkholder-Davis-Gundy inequality we get (\ref{eq-lp-weakbd}).
\end{proof}

In addition to the time-wise weak convergence, we can get a process level convergence rate estimate with the techniques in the proof of regularity results for white noise solutions:

\begin{thm}\label{thm-quanti-bd-1}
	Suppose $T>0,\kappa\in (0,1)$, $p\in[2,\infty)$. 
	Then there exists a constant $C=C(p,T,\manifold, A,\kappa)>0$ , such that for any $t,s\in[0,T]$, 
	\begin{equation}\label{bd-quant-1}
	\lpnormprob{\hnormm{v_t-v_s}{-\frac{d}{2}-\kappa}}{p} \leq C \opnorm{Q}^{1/2}|t-s|^{\kappa/2}\LtwoMnorm{u_0}. 
	\end{equation}
\end{thm}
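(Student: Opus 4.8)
The plan is to follow the stochastic-convolution strategy already used for the white noise solutions in Theorem \ref{thm-regularity-moment}, but to carry the operator norm $\opnorm{Q}$ through the estimate rather than the trace. Starting from the mild formulation $v_t=\int_0^t P_{t-r}\nabla u_r\cdot\der W_r$, I first split the increment in the standard way,
\begin{equation*}
v_t-v_s=\underbrace{\int_0^s(P_{t-r}-P_{s-r})\nabla u_r\cdot\der W_r}_{I_1}+\underbrace{\int_s^t P_{t-r}\nabla u_r\cdot\der W_r}_{I_2},
\end{equation*}
and apply Proposition \ref{prop moment estimate via quadratic} in $V=H_A^{-d/2-\kappa}$ to each term, reducing everything to bounding the quadratic-variation traces $\tr_{H_A^{-d/2-\kappa}}(\Phi_r Q\Phi_r^\ast)$, where $\Phi_r=P_{t-r}(-\nabla^\ast)\multiplyfunc{u_r}$ for $I_2$ and $\Psi_r=(P_{t-r}-P_{s-r})(-\nabla^\ast)\multiplyfunc{u_r}$ for $I_1$. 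Because $u$ is an $L^2$ solution, $\LtwoMnorm{u_r}=\LtwoMnorm{u_0}$ almost surely, so these traces are deterministic and the $L^{p/2}(\prob)$-norms are harmless.

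The key observation is that each such trace equals, by Corollary \ref{cor qaalpha expansion}, $\tr_{L^2(\manifold,\tanbdl)}(\multiplyfunc{u_r}Q\multiplyfunc{u_r}S_r)$, with $S_r=\nabla(1-L)^{-d/2-\kappa}e^{(t-r)L}\nabla^\ast$ for $I_2$ and $S_r=\nabla(1-L)^{-d/2-\kappa}(e^{(t-r)L/2}-e^{(s-r)L/2})^2\nabla^\ast$ for $I_1$. In both cases the middle factor is a positive function of $-L$, so $S_r$ and $\multiplyfunc{u_r}S_r\multiplyfunc{u_r}$ are positive semidefinite; Proposition \ref{prop trace(AB) < opnorm A trace B} then yields $\tr(\multiplyfunc{u_r}Q\multiplyfunc{u_r}S_r)=\tr(Q\,\multiplyfunc{u_r}S_r\multiplyfunc{u_r})\le\opnorm{Q}\tr(\multiplyfunc{u_r}S_r\multiplyfunc{u_r})$. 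By the argument of Proposition \ref{prop trace uqu}, $\tr(\multiplyfunc{u_r}S_r\multiplyfunc{u_r})=\int_\manifold u_r(x)^2\tr S_r(x,x)\der x\le\LtwoMnorm{u_0}^2\sup_x|\tr S_r(x,x)|$, and the diagonal of $S_r$ is controlled by the $C^2$-norm of the underlying scalar kernel. For $I_2$ that kernel is $q^{A,-d/2-\kappa}_{t-r}$, and Lemma \ref{lma:heat kernel estimate C2} gives $\|q^{A,-d/2-\kappa}_{t-r}\|_{C^2}\lesssim(t-r)^{\kappa-1}$, which integrates over $[s,t]$ to a constant times $(t-s)^\kappa$.

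The main obstacle is the history term $I_1$. Here I expand $(e^{(t-r)L/2}-e^{(s-r)L/2})^2=e^{(t-r)L}-2e^{((t+s)/2-r)L}+e^{(s-r)L}$, so that its scalar kernel is the second difference $q^{A,-d/2-\kappa}_{t-r}-2q^{A,-d/2-\kappa}_{(t+s)/2-r}+q^{A,-d/2-\kappa}_{s-r}$. Applying the time-difference estimate of Lemma \ref{lma:heat kernel estimate C2} (whose exponent is $d/2+\alpha+1=1-\kappa\in(0,1)$ at $\alpha=-d/2-\kappa$) to the two consecutive differences and telescoping gives a $C^2$-bound of order $(s-r)^{-(1-\kappa)}-(t-r)^{-(1-\kappa)}$. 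Integrating over $r\in[0,s]$ and using
\begin{equation*}
\int_0^s(s-r)^{-(1-\kappa)}\der r-\int_0^s(t-r)^{-(1-\kappa)}\der r=\kappa^{-1}\bigl(s^\kappa-t^\kappa+(t-s)^\kappa\bigr)\le\kappa^{-1}(t-s)^\kappa
\end{equation*}
produces the same $(t-s)^\kappa$ rate. Combining the bounds for $I_1$ and $I_2$, taking the square root dictated by Proposition \ref{prop moment estimate via quadratic}, and passing from $H_A^{-d/2-\kappa}$ to the equivalent norm $H^{-d/2-\kappa}$ then yields \eqref{bd-quant-1}. The delicate points are the positivity argument used to extract $\opnorm{Q}$ and the bookkeeping of the second time-difference; the integrability of $(s-r)^{-(1-\kappa)}$ near $r=s$ is exactly what $\kappa\in(0,1)$ guarantees, matching the admissible range $\alpha\in(-d/2-1,-d/2)$ of Lemma \ref{lma:heat kernel estimate C2}.
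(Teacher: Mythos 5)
Your proposal is correct and takes essentially the same route as the paper's proof: the same mild-formulation splitting into a history term and a new term, the same reduction via Proposition \ref{prop moment estimate via quadratic} in $H_A^{-d/2-\kappa}$, the same extraction of $\opnorm{Q}$ through positivity and Proposition \ref{prop trace(AB) < opnorm A trace B}, the diagonal-trace identity of Proposition \ref{prop trace uqu}, and the kernel bounds of Lemma \ref{lma:heat kernel estimate C2}, concluding with the equivalence of $H_A^{-d/2-\kappa}$ and $H^{-d/2-\kappa}$. The only cosmetic difference is in the history term, where the paper uses the spectral inequality $(P_{s-r}-P_{t-r})^2\leq P_{2(s-r)}-P_{2(t-r)}$ and a single application of the time-difference estimate, while you expand the square into three semigroups and telescope two time-differences; both yield the identical bound $(s-r)^{\kappa-1}-(t-r)^{\kappa-1}$ and hence the same $(t-s)^\kappa$ rate.
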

\begin{proof}

Note that
\begin{align*}
    v_t-v_s= &\int_{0}^{t} P_{t-r} \nabla u_r \cdot \der W_r-\int_{0}^{s} P_{s-r} \nabla u_r 
    \cdot \der W_r
    \\=&\int_0^s (P_{t-s}-1)P_{s-r}\nabla u_r\cdot \der W_r + \int_{s}^t P_{t-r} \nabla u_r \cdot \der W_r.
\end{align*}

Using Proposition \ref{prop moment estimate via quadratic}, we control the first term by
\begin{align*}
    &\lpnormprob{\anorm{\int_0^s (P_{t-s}-1)P_{s-r}\nabla u_r\cdot \der W_r}{-d/2-\kappa}}{p}^2
    \\&\lesssim_p  \int_0^s\lpnormprob{\tr_{H^{-d/2-\kappa}_{A}}((P_{t-r}-P_{s-r})\nabla\multiplyfunc{u_r})Q((P_{t-r}-P_{s-r})\nabla\multiplyfunc{u_r})^\ast}{p/2} \der r
    \\&\leq \int_0^s\lpnormprob{\tr(Q\circ\multiplyfunc{u_r}\nabla(1-L)^{-d/2-\kappa}(P_{s-r}-P_{t-r})^2 \nabla^\ast \multiplyfunc{u_r})}{p/2} \der r
\end{align*}
 By Proposition \ref{prop trace(AB) < opnorm A trace B},
 \begin{align*}
     &\tr(Q \circ \multiplyfunc{u_r}\nabla(1-L)^{-d/2-\kappa}(P_{s-r}-P_{t-r})^2 \nabla^\ast \multiplyfunc{u_r}) 
     \\ \leq &\opnorm{Q}\tr(\multiplyfunc{u_r}\nabla(1-L)^{-d/2-\kappa}(P_{s-r}-P_{t-r})^2 \nabla^\ast \multiplyfunc{u_r})
     \\ \leq &\opnorm{Q}\tr(\multiplyfunc{u_r}\nabla (1-L)^{-d/2-\kappa}(P_{2(s-r)}-P_{2(t-r)})\nabla^\ast\multiplyfunc{u_r}).
 \end{align*}
 By Proposition \ref{prop trace uqu} and Lemma \ref{lma:heat kernel estimate C2},
 \begin{align*}
     &\tr(\multiplyfunc{u_r}\nabla (1-L)^{-d/2-\kappa}(P_{2(s-r)}-P_{2(t-r)})\nabla^\ast\multiplyfunc{u_r})
     \\& = \int_\manifold u_r^2(x) \nabla_x\nabla_y (\kernelqAalpha{-d/2-\kappa}{s-r}-\kernelqAalpha{-d/2-\kappa}{t-r})(x,y)|_{y=x} \der x
     \\ &\leq \LtwoMnorm{u_r}^2 \| \kernelqAalpha{-d/2-\kappa}{s-r} -\kernelqAalpha{-d/2-\kappa}{t-r}\|_{C^2(\manifold\times\manifold)}
     \\ &\lesssim_{T,A,\manifold}  \LtwoMnorm{u_0}^2 ((s-r)^{\kappa-1}-(t-r)^{\kappa-1}),
 \end{align*}
    where $q_t^{A,\alpha}$ is the integral kernel of the operator $(1-L)^\alpha e^tL$.
 Therefore
 \begin{align*}
    &\lpnormprob{\anorm{\int_0^s (P_{t-s}-1)P_{s-r}\nabla u_r\cdot \der W_r}{-d/2-\kappa}^2}{p}
    \\ &\lesssim_{p,A,T,\manifold}  \opnorm{Q} \LtwoMnorm{u_0}^2 \int_0^s((s-r)^{\kappa-1}-(t-r)^{\kappa-1})\der r 
    \\ &\lesssim_\kappa   \opnorm{Q} \LtwoMnorm{u_0}^2 (t-s)^\kappa.
 \end{align*}
Then we bound the second term by Proposition \ref{prop moment estimate via quadratic}, Proposition \ref{prop trace(AB) < opnorm A trace B}, Proposition \ref{prop trace uqu} and Lemma \ref{lma:heat kernel estimate C2}:
 \begin{align*}
        &\lpnormprob{\anorm{\int_s^t P_{t-r} \nabla u_r \cdot \der W_r}{-d/2-\kappa}}{p}^{2}
        \\ &\lesssim_p \int_s^t \lpnormprob{\tr_{H_A^{-d/2-\kappa}}(P_{t-r}\nabla ^\ast \multiplyfunc{u_r} Q  \multiplyfunc{u_r}  \nabla P_{t-r})}{p/2} \der r
        \\ &=\int_s^t\lpnormprob{\tr(Q\circ\multiplyfunc{u_r}\nabla(1-L)^{-d/2-\kappa}P_{2(t-r)} \nabla^\ast \multiplyfunc{u_r})}{p/2} \der r
        \\ &\leq \int_s^t\opnorm{Q} \LtwoMnorm{u_r}^2 \| \kernelqAalpha{-d/2-\kappa}{t-r}\|_{C^2(\manifold\times\manifold)} \der r
        \\ &\leq  \opnorm{Q}\LtwoMnorm{u_0}^2\int_s^t (t-r)^{\kappa-1} \der r
        \\ \lesssim_\kappa & \opnorm{Q}\LtwoMnorm{u_0}^2 (t-s)^\kappa.
    \end{align*}

    As $H^{-d/2-\kappa}_A$  and $H^{-d/2-\kappa}$ are equivalent and the constants  can be chosen uniformly in $A$ if $A$ and $A^{-1}$ is uniformly bounded in the $C^k$ norm for some sufficiently large $k$, we get the bound (\ref{bd-quant-1}).
    
\end{proof}

\begin{rmk}
	Note that $A^{ (N)}$ converges to the identity matrix in $C^\infty$ (condition (\ref{condition-diagonal})), we can have a uniform $C$ for all solutions $u^{ (N)}$ in the estimate (\ref{bd-quant-1}). Therefore, Theorem \ref{thm-quanti-bd-1} gives a quantitative bound to the convergence rate.
\end{rmk}

\begin{thm}
		Suppose  $p\in[2,\infty)$.
		Then there exists a constant $C=C(\manifold,A)>0$ such that for any $t,s\in[0,\infty)$,  
		\begin{equation*}
		\lpnormprob{\hnormm{v_t-v_s}{-1}}{p} \leq C |t-s|^{1/2} \LtwoMnorm{u_0}.
		\end{equation*}
        Moreover, the constant can be chosen uniformly in $A$ provided that the $L^\infty$ norm of $A$ and $A^{-1}$ are uniformly bounded.
\end{thm}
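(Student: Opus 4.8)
The plan is to mimic the proof of Theorem~\ref{thm l2 h-1 bd} rather than that of Theorem~\ref{thm-quanti-bd-1}. Since here the negative exponent is $-1\ge -d/2$, the kernel estimate of Lemma~\ref{lma:heat kernel estimate C2} (valid only for $\alpha<-d/2$) is unavailable, so instead of restricting the kernel to the diagonal I keep everything at the operator level; this is exactly what produces a bound depending on $A$ only through the $L^\infty$ norms of $A,A^{-1}$ and, crucially, carrying no factor of $\opnorm{Q}$ (so the estimate does not vanish as $N\to\infty$, which is what makes it a genuine time-regularity bound useful for tightness). Starting from the mild formulation $v_t=\int_0^t P_{t-r}\nabla u_r\cdot\der W_r$ and the semigroup identity $P_{t-r}-P_{s-r}=(P_{t-s}-\identity)P_{s-r}$, I split
\[
v_t-v_s=\int_0^s(P_{t-s}-\identity)P_{s-r}\nabla u_r\cdot\der W_r+\int_s^t P_{t-r}\nabla u_r\cdot\der W_r,
\]
exactly as in Theorem~\ref{thm-quanti-bd-1}, and estimate the two stochastic integrals separately in $H^{-1}_A$.

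For each term I apply Proposition~\ref{prop moment estimate via quadratic}, reducing the $p$-th moment to a time integral of $\tr_{H^{-1}_A}(\Phi_r Q\Phi_r^\ast)$. Using self-adjointness of $P_\tau$ and $(1-L)^{-1}$ together with cyclicity of the trace, and absorbing the weight $(1-L)^{-1}=\mathcal{J}_A$ of $H^{-1}_A$, I rewrite this trace in the sandwiched form $\tr\big((\multiplyfunc{u_r}Q\multiplyfunc{u_r})\circ B\big)$ with $B$ positive semidefinite and bounded: for the second term $B=\nabla(1-L)^{-1}P_{2(t-r)}\nabla^\ast$, and for the first term $B'=\nabla(1-L)^{-1}(P_{t-s}-\identity)^2P_{2(s-r)}\nabla^\ast$. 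By Proposition~\ref{prop trace(AB) < opnorm A trace B} and Proposition~\ref{prop trace uqu},
\[
\tr\big((\multiplyfunc{u_r}Q\multiplyfunc{u_r})\circ B\big)\le |B|\,\tr(\multiplyfunc{u_r}Q\multiplyfunc{u_r})=|B|\int_\manifold u_r(x)^2\,\tr A(x)\,\der x\lesssim_{\manifold}|B|\,\lpmani{A}{\infty}\LtwoMnorm{u_0}^2,
\]
where I used conservation of the $L^2$ norm, $\LtwoMnorm{u_r}=\LtwoMnorm{u_0}$. It then remains to bound the operator norms $|B|,|B'|$ and integrate them in $r$.

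The operator norms are computed by spectral calculus in $L$. Factoring $B'=[\nabla f_\sigma(L)^{1/2}][\nabla f_\sigma(L)^{1/2}]^\ast$ with $f_\sigma(\mu)=(1+\mu)^{-1}(1-e^{-(t-s)\mu/2})^2e^{-\sigma\mu}$ and $\sigma=s-r$, and writing $\nabla f_\sigma(L)^{1/2}=\big(\nabla(1-L)^{-1/2}\big)\big((1-L)^{1/2}f_\sigma(L)^{1/2}\big)$, I use that $\nabla(1-L)^{-1/2}$ is bounded on $L^2$ with norm $\lesssim_A 1$ (this is the only place where $A$ enters, via $H^1_A\cong H^1$, requiring just $L^\infty$ control of $A,A^{-1}$). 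Since the symbol of $(1-L)^{1/2}f_\sigma(L)^{1/2}$ equals $(1-e^{-(t-s)\mu/2})e^{-\sigma\mu/2}$, I obtain $|B'|\lesssim_A h(\sigma)^2$ with $h(\sigma):=\sup_{\mu\ge0}(1-e^{-(t-s)\mu/2})e^{-\sigma\mu/2}$, and likewise $|B|\lesssim_A 1$. The hard part — the one place where the exponent $\tfrac12$ is actually earned — is the integral $\int_0^s|B'|\,\der r\lesssim\int_0^\infty h(\sigma)^2\,\der\sigma\lesssim(t-s)$. Rescaling $\mu$ and setting $\rho=(t-s)/\sigma$ gives $h(\sigma)^2=\sup_{\nu\ge0}(1-e^{-\rho\nu/2})^2e^{-\nu}\le\min(1,e^{-2}\rho^2)$ (the second bound from $1-e^{-x}\le x$ and $\sup_\nu\nu^2e^{-\nu}=4e^{-2}$); splitting at $\sigma=t-s$ yields $\int_0^{t-s}1\,\der\sigma+e^{-2}(t-s)^2\int_{t-s}^\infty\sigma^{-2}\,\der\sigma\lesssim(t-s)$.

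Combining these estimates, the first term is $\lesssim_{A,\manifold,p}\LtwoMnorm{u_0}(t-s)^{1/2}$, while the second is controlled by $\int_s^t|B|\,\der r\lesssim_A(t-s)$ and hence of the same order. Adding the two contributions and passing from $\anorm{\cdot}{-1}$ to the equivalent norm $\hnormm{\cdot}{-1}$ gives the claim, with a constant depending on $A$ only through $\lpmani{A}{\infty}$ and $\lpmani{A^{-1}}{\infty}$, as asserted. The only subtleties to check carefully in the full write-up are the legitimacy of the trace rewriting (absorbing $\mathcal{J}_A$ and reducing $\tr_{H^{-1}_A}$ to an $L^2$ trace, as already done implicitly in Theorem~\ref{thm l2 h-1 bd}) and the positivity of $B,B'$ needed to invoke Proposition~\ref{prop trace(AB) < opnorm A trace B}; both follow from the commuting spectral factorizations above.
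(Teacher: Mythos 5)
Your proposal is correct, and the skeleton agrees with the paper's: same splitting point at time $s$, and the convolution $\int_s^t P_{t-r}\nabla u_r\cdot\der W_r$ is handled exactly as in Theorem~\ref{thm l2 h-1 bd} (BDG via Proposition~\ref{prop moment estimate via quadratic}, then $\tr\bigl((\multiplyfunc{u_r}Q\multiplyfunc{u_r})\circ B\bigr)\le|B|\,\tr(\multiplyfunc{u_r}Q\multiplyfunc{u_r})\lesssim\lpmani{A}{\infty}\LtwoMnorm{u_0}^2$, so no $\opnorm{Q}$ factor appears — in either proof). Where you genuinely diverge is the first term. The paper writes it as $(P_{t-s}-\identity)v_s$ and disposes of it \emph{deterministically}: by $L^2$ conservation, $\LtwoMnorm{v_s}\le\LtwoMnorm{u_s}+\LtwoMnorm{P_su_0}\le2\LtwoMnorm{u_0}$ pathwise, so the semigroup smoothing bound $\|(P_\tau-\identity)\|_{L^2\to H^{-1}_A}\lesssim\tau^{1/2}$ finishes it in one line. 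You instead keep $v_s$ expanded as a stochastic integral over $[0,s]$, apply the BDG machinery to $\int_0^s(P_{t-s}-\identity)P_{s-r}\nabla u_r\cdot\der W_r$, and must then earn the exponent $1/2$ through the spectral-calculus estimate $|B'|\lesssim_A h(s-r)^2$ and the integral $\int_0^\infty h(\sigma)^2\der\sigma\lesssim(t-s)$ — all of which I checked and which is correct (the splitting at $\sigma=t-s$ with $h(\sigma)^2\le\min(1,e^{-2}(t-s)^2/\sigma^2)$ works). What the paper's route buys is brevity, since the pathwise $L^2$ bound on $v_s$ is freely available; what your route buys is a uniform treatment of both terms by the same martingale machinery (mirroring Theorem~\ref{thm-quanti-bd-1}'s decomposition), which would still function in situations where one lacks an almost-sure $L^2$ bound on $v_s$ itself — though note you have not eliminated the use of $L^2$ conservation, since $\LtwoMnorm{u_r}=\LtwoMnorm{u_0}$ still enters your trace bound.
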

\begin{proof}
	First note that 
	$$v_t-v_s= (P_{t-s}-\identity)v_s +\int_s^t P_{t-r} \nabla u_r \cdot \der W_r.$$
	The first term can be easily bounded by heat kernel estimates and the $L^2$ boundedness of $v_s$:
    \begin{equation}\label{v -1 bd 1}
        \hnormm{ (P_{t-s}-\identity)v_s}{-1}\lesssim_{A,\manifold} (t-s)^{1/2}\LtwoMnorm{v_s}\lesssim \LtwoMnorm{u_0} (t-s)^{1/2}.
    \end{equation}
	
	The stochastic convolution term can be bounded following the same arguments in Theorem \ref{thm l2 h-1 bd},
    \begin{equation} \label{v -1 bd 2}
        \lpnormprob{\hnormm{\int_s^t P_{t-r} \nabla u_r \cdot \der W_r}{-1}}{p} \lesssim_{A,\manifold,p} \LtwoMnorm{u_0}(t-s)^{1/2}.
    \end{equation}
    Combining (\ref{v -1 bd 1}) and (\ref{v -1 bd 2}), we get the desired bound.
	
\end{proof}

Noting that $\LtwoMnorm{v_t-v_s}\leq \LtwoMnorm{v_t}+\LtwoMnorm{v_s}\leq 4\|u_0\|_{L^2(\manifold)}$, with interpolation of the Sobolev spaces, we can easily get the following corollary:

\begin{cor}\label{quantitative-stoch}
		Suppose $T>0,\kappa\in (0,1)$, $p\in[2,\infty), \alpha\in[0,1]$. 
		Then there exists a constant $C>0$ which may depend on $\kappa,p$, the manifold $\manifold$ and the $C^k$ norms of $A$ and $A^{-1}$ for some large enough $k$, such that for any $t,s\in[0,T]$, 
		\begin{equation}\label{bd-quali-1}
		\lpnormprob{\hnormm{v_t-v_s}{-\frac{\alpha d}{2}-\kappa}}{p} \leq C \opnorm{Q}^{\alpha/2}|t-s|^{\kappa/2}\LtwoMnorm{u_0}.
		\end{equation}
		
		With Kolmogorov continuity criterion, for $\varepsilon\in (0,\kappa/2]$, we can get the convergence rate of $v^{ (N)}=u^{ (N)}-P_t^{ (N)}u^{ (N)}$:
		\begin{equation}
		\lpnormprob{\lVert v^{ (N)} \rVert_{C^{\frac{\kappa}{2}-\varepsilon} ([0,T],H^{-\frac{\alpha d}{2}-\kappa})}}{p} \lesssim \opnorm{Q^{ (N)}}^{\alpha/2}\LtwoMnorm{u_0}.
		\end{equation}
\end{cor}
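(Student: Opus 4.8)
The plan is to derive both displayed inequalities purely by interpolating Bessel-potential Sobolev norms among the $v$-estimates already in hand: the $H^{-1}$ bound $\lpnormprob{\hnormm{v_t-v_s}{-1}}{p}\lesssim |t-s|^{1/2}\LtwoMnorm{u_0}$ of the preceding theorem, the $H^{-d/2-\kappa}$ bound $\lpnormprob{\hnormm{v_t-v_s}{-d/2-\kappa}}{p}\lesssim \opnorm{Q}^{1/2}|t-s|^{\kappa/2}\LtwoMnorm{u_0}$ of Theorem \ref{thm-quanti-bd-1}, and the almost sure bound $\LtwoMnorm{v_t-v_s}\leq 4\LtwoMnorm{u_0}$ (which follows from $L^2$-conservation of $u$ and contractivity of $P_t$). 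The only analytic tool needed is the interpolation inequality $\hnormm{f}{s}\leq \hnormm{f}{s_0}^{1-\theta}\hnormm{f}{s_1}^{\theta}$ for $s=(1-\theta)s_0+\theta s_1$, which is immediate from the spectral definition $\hnormm{f}{s}^2=\sum_j(1+\lambda_j)^s|(f,\phi_j)|^2$ together with H\"older's inequality applied to the spectral sum; since all three input bounds are stated in the standard $\hnormm{\cdot}{}$ norms, no $H^s_A$-to-$H^s$ conversion is needed at this stage.

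The subtle point, which dictates the whole routing, is matching the time exponent. A naive interpolation of the $H^{-d/2-\kappa}$ bound directly against the time-constant $L^2$ bound with weight $\alpha$ would land in $H^{-\alpha d/2-\alpha\kappa}$ and, after embedding into $H^{-\alpha d/2-\kappa}$, would only yield the weaker time factor $|t-s|^{\alpha\kappa/2}$, which is insufficient for the stated $|t-s|^{\kappa/2}$. First I would therefore upgrade the $L^2$ bound: interpolating the $H^{-1}$ estimate against $\LtwoMnorm{v_t-v_s}\leq 4\LtwoMnorm{u_0}$ with weight $\kappa$ (so that $-\kappa=\kappa\cdot(-1)+(1-\kappa)\cdot 0$), pulling the deterministic $L^2$ factor out of $\lpnormprob{\cdot}{p}$, and applying the $H^{-1}$ theorem at the shifted exponent $\kappa p$ (replaced by $p=2$ if $\kappa p<2$), gives $\lpnormprob{\hnormm{v_t-v_s}{-\kappa}}{p}\lesssim |t-s|^{\kappa/2}\LtwoMnorm{u_0}$, carrying no $\opnorm{Q}$ factor but already the full time decay $|t-s|^{\kappa/2}$.

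Next I would interpolate this $H^{-\kappa}$ estimate against the $H^{-d/2-\kappa}$ estimate of Theorem \ref{thm-quanti-bd-1} with weight $\alpha$: the spatial index becomes $(1-\alpha)(-\kappa)+\alpha(-d/2-\kappa)=-\alpha d/2-\kappa$, the operator-norm power becomes $\opnorm{Q}^{\alpha/2}$, and because \emph{both} endpoints carry $|t-s|^{\kappa/2}$ the time factor is preserved. Taking $\lpnormprob{\cdot}{p}$ of the pointwise product and splitting it by H\"older's inequality in $\omega$ with conjugate exponents $1/(1-\alpha)$ and $1/\alpha$ yields $\lpnormprob{\hnormm{v_t-v_s}{-\alpha d/2-\kappa}}{p}\leq \lpnormprob{\hnormm{v_t-v_s}{-\kappa}}{p}^{1-\alpha}\lpnormprob{\hnormm{v_t-v_s}{-d/2-\kappa}}{p}^{\alpha}\lesssim \opnorm{Q}^{\alpha/2}|t-s|^{\kappa/2}\LtwoMnorm{u_0}$, which is exactly \eqref{bd-quali-1}; the endpoints $\alpha=0$ and $\alpha=1$ reduce to the $H^{-\kappa}$ estimate and to Theorem \ref{thm-quanti-bd-1}. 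The constant inherits its dependence on $\kappa,p,\manifold$ and the $C^k$-norms of $A,A^{-1}$ (with the uniformity statements) directly from the two input theorems.

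For the second display I would invoke the moment form of the Kolmogorov continuity criterion. Since \eqref{bd-quali-1} holds for every finite exponent (with the constant depending on it), I would apply it with an auxiliary exponent $q\geq p$ chosen so large that $q\kappa/2>1$ and $1/q\leq\varepsilon$; Kolmogorov then furnishes a modification in $C^{\kappa/2-\varepsilon}([0,T],H^{-\alpha d/2-\kappa})$ whose H\"older norm has $\lpnormprob{\cdot}{q}$ controlled by the coefficient $\opnorm{Q^{(N)}}^{\alpha/2}\LtwoMnorm{u_0}$ appearing in \eqref{bd-quali-1}, and the embedding $L^q(\prob)\hookrightarrow L^p(\prob)$ transfers this to the $\lpnormprob{\cdot}{p}$ bound claimed for $v^{(N)}$; as $N\to\infty$ the factor $\opnorm{Q^{(N)}}^{\alpha/2}\to 0$ by condition \eqref{condition-l2}. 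I expect the main obstacle to be precisely the time-exponent bookkeeping in the second paragraph: one must route the $L^2$ bound through the $H^{-1}$ estimate to retain the full $|t-s|^{\kappa/2}$ decay rather than losing a factor $\alpha$ in the H\"older exponent, and verify that the deterministic $L^2$ bound may legitimately be extracted from $\lpnormprob{\cdot}{p}$ before the stochastic estimates are invoked at the shifted exponents.
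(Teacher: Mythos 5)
Your proposal is correct and is essentially the paper's own argument: the paper disposes of this corollary in one sentence ("Noting that $\LtwoMnorm{v_t-v_s}\leq 4\LtwoMnorm{u_0}$, with interpolation of the Sobolev spaces, we can easily get the following corollary"), i.e.\ interpolation among the three available bounds (the a.s.\ $L^2$ bound, the $H^{-1}$ theorem proved immediately before, and Theorem \ref{thm-quanti-bd-1}) followed by the Kolmogorov continuity criterion, which is exactly what you carry out. Your routing of the $L^2$ bound through the $H^{-1}$ estimate to retain the full $|t-s|^{\kappa/2}$ decay is precisely the detail the paper leaves implicit, and it is needed (e.g.\ the case $\alpha=0$ cannot be reached by weighting the $H^{-d/2-\kappa}$ endpoint alone), so your bookkeeping correctly fills in the paper's sketch.
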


Now we provide the convergence rate of the deterministic part $\bar{u}_t=P_t u_0$.
We have the following estimate:
\begin{prop}\label{quantitative-deter}
	Let $\kappa\in [0,1), T>0$. Then the deterministic error $w_t:=P_t u_0 - e^{t\Delta} u_0$ can be bounded by
	\begin{equation}\label{whoelder}
	 \hnormm{w_t-w_s}{-\kappa} \lesssim_{A,T} \lVert A-I\rVert_{L^\infty}|t-s|^{\kappa/2}\LtwoMnorm{u_0},
	\end{equation}
	where the implicit constant can be chosen uniformly in $A$, provided that $A$ and $A^{-1}$ are uniformly bounded in $L^\infty$ norm.
\end{prop}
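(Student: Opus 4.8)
The plan is to compare the two semigroups via Duhamel's formula while keeping the factor $A-\identity$ explicit, so that the entire error stays proportional to $\|A-\identity\|_{L^\infty}$. Write $P_t=e^{\frac12 tL}$ with $L=\nabla\cdot(A\nabla)$, and let $S_t$ denote the heat semigroup generated by $\tfrac12\Delta$ (the generator of the limiting equation $\partial_t u=\tfrac12\Delta u$), so that $w_t=P_tu_0-S_tu_0$. The two generators differ by $G:=\tfrac12(L-\Delta)=\tfrac12\nabla\cdot((A-\identity)\nabla)$, and the variation-of-parameters identity gives
\begin{equation*}
w_t=\int_0^tP_{t-r}\,G\,S_ru_0\,\der r=\frac12\int_0^tP_{t-r}\nabla\cdot((A-\identity)\nabla)S_ru_0\,\der r,
\end{equation*}
which is justified because $u_0\in L^2$ is smoothed instantaneously by $S_r$ and both semigroups are strongly continuous on the Sobolev scale.

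Next I would record two ingredients. First, the bilinear-form bound $\hnormm{\nabla\cdot((A-\identity)\nabla)f}{-1}\le\|A-\identity\|_{L^\infty}\hnormm{f}{1}$, which follows from $(\nabla\cdot((A-\identity)\nabla)f,\phi)=-((A-\identity)\nabla f,\nabla\phi)$ and uses \emph{only} the $L^\infty$ size of $A-\identity$, not its derivatives. Second, the semigroup smoothing estimates $\lVert P_\tau\rVert_{H^a\to H^{a+b}}\lesssim_T\tau^{-b/2}$ for $b\ge0$ and $\lVert\identity-P_\tau\rVert_{H^a\to H^{a-\gamma}}\lesssim\tau^{\gamma/2}$ for $\gamma\in[0,2]$, together with the same bounds for $S_\tau$, all obtained from spectral calculus exactly as in the proof of Theorem \ref{thm l2 h-1 bd}; for orders $|a|\le1$ these hold with constants depending only on $\|A\|_{L^\infty},\|A^{-1}\|_{L^\infty}$ through the equivalence $H^a_A\simeq H^a$. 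For the increment I split
\begin{equation*}
w_t-w_s=\frac12\int_s^tP_{t-r}\nabla\cdot((A-\identity)\nabla)S_ru_0\,\der r+\frac12\int_0^s(P_{t-s}-\identity)P_{s-r}\nabla\cdot((A-\identity)\nabla)S_ru_0\,\der r.
\end{equation*}
For the first (``fresh'') integral, $S_ru_0\in H^1$ with $\hnormm{S_ru_0}{1}\lesssim r^{-1/2}\LtwoMnorm{u_0}$, so the integrand in $H^{-\kappa}$ is $\lesssim\|A-\identity\|_{L^\infty}(t-r)^{-(1-\kappa)/2}r^{-1/2}\LtwoMnorm{u_0}$, and an elementary case split ($s\ge t/2$ versus $s<t/2$) gives $\int_s^t(t-r)^{-(1-\kappa)/2}r^{-1/2}\der r\lesssim_\kappa(t-s)^{\kappa/2}$. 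For the second (``old'') integral I apply $\lVert P_{t-s}-\identity\rVert_{L^2\to H^{-\kappa}}\lesssim(t-s)^{\kappa/2}$ to $P_{s-r}\nabla\cdot((A-\identity)\nabla)S_ru_0$, whose $L^2$ norm is $\lesssim\|A-\identity\|_{L^\infty}(s-r)^{-1/2}r^{-1/2}\LtwoMnorm{u_0}$; since $\int_0^s(s-r)^{-1/2}r^{-1/2}\der r=\pi$ is a constant, this piece is again $\lesssim\|A-\identity\|_{L^\infty}(t-s)^{\kappa/2}\LtwoMnorm{u_0}$. Adding the two bounds yields \eqref{whoelder}.

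The main obstacle is the requirement that the error be controlled by $\|A-\identity\|_{L^\infty}$ alone, rather than by a higher $C^k$ norm of $A$: this is precisely what forces the symmetric $H^1\to H^{-1}$ mapping of $\nabla\cdot((A-\identity)\nabla)$ and the careful derivative bookkeeping above, since integrating the divergence by parts onto the other factor would cost derivatives of $A$. The secondary subtlety is extracting the genuine increment power $(t-s)^{\kappa/2}$ rather than the easier pointwise power $t^{\kappa/2}$ from the fresh integral, which is exactly where the case split enters. Finally, the uniformity claim is automatic because every Sobolev order appearing in the argument lies in $[-1,1]$, where $H^a_A\simeq H^a$ with constants governed only by $\|A\|_{L^\infty}$ and $\|A^{-1}\|_{L^\infty}$, so no higher regularity of $A$ is used.
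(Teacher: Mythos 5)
Your proof is correct, and it rests on the same two pillars as the paper's: a variation-of-parameters comparison of the two semigroups, and the bilinear-form bound $\lVert \nabla\cdot((A-\identity)\nabla)\rVert_{H^1\to H^{-1}}\leq \lVert A-\identity\rVert_{L^\infty}$, which is exactly what keeps the error proportional to the $L^\infty$ distance rather than to derivatives of $A$. The arrangement differs in two ways. First, your Duhamel formula is the mirror image of the paper's: you put the variable-coefficient semigroup $P_{t-r}$ outside and the constant-coefficient $S_r$ inside, whereas the paper writes $w_t=\int_0^t e^{(t-r)\Delta}(L-\Delta)P_ru_0\,\der r$, so it only ever needs $\lVert P_r\rVert_{L^2\to H^1}\lesssim r^{-1/2}$ and gets the outer smoothing for free from the fixed Laplacian; your version instead needs $\lVert P_\tau\rVert_{H^{-1}\to H^{-\kappa}}\lesssim\tau^{-(1-\kappa)/2}$ with constants uniform in $A$, which is legitimate because, as you note, the equivalence $H^a_A\simeq H^a$ for $|a|\le 1$ costs only $\lVert A\rVert_{L^\infty}$ and $\lVert A^{-1}\rVert_{L^\infty}$, but it is the slightly heavier requirement. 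Second, the increment bookkeeping differs: the paper first proves the pointwise bound $\hnormm{w_s}{-\kappa}\lesssim\lVert A-\identity\rVert_{L^\infty}\LtwoMnorm{u_0}$ (in particular at $\kappa=0$, giving an $L^2$ bound on $w_s$), then writes $w_t-w_s=(e^{(t-s)\Delta}-\identity)w_s+\int_s^t e^{(t-r)\Delta}(L-\Delta)P_r u_0\,\der r$ and handles the tail integral via the substitution trick $r^{-1/2}\le(r-s)^{-1/2}$ and a Beta integral; you instead keep everything inside the Duhamel integrals, apply $(P_{t-s}-\identity)$ directly to the integrand of the $[0,s]$ portion, and replace the Beta-integral trick by the case split $s\ge t/2$ versus $s<t/2$ (which does yield $(t-s)^{\kappa/2}$, as claimed). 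Both routes are sound: yours avoids the preliminary $L^2$ bound on $w_s$ as a separate step, while the paper's avoids any negative-order smoothing estimates for the variable-coefficient semigroup.
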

\begin{proof}
	
	Note that 
	\[ (\partial_t-\Delta)w_t= (L-\Delta)P_t u_0. \]
	Therefore,
	\begin{equation}\label{wmild}
	w_t-w_s= (e^{ (t-s)\Delta}-\identity)w_s+\int_{s}^{t} e^{ (t-r)\Delta} (L-\Delta)P_r u_0 \der r.
	\end{equation}

	First, we take $s=0$ and bound $w_t$. We only need to bound the integral on the right hand side since $w_0=0$.

	\begin{align*}
	&\hnormm{ e^{ (t-r)\Delta} (L-\Delta)P_r u_0}{-\kappa}
	\\ \leq & \lVert e^{ (t-r)\Delta} \rVert_{H^{-1}\rightarrow H^{-\kappa}} \lVert L-\Delta \rVert_{H^1\rightarrow H^{-1}} \lVert P_r \rVert_{L^2\rightarrow H^{1}} \lVert u_0 \rVert_{L^2}.
	\end{align*}
	
	Note that $\lVert e^{ (t-r)\Delta} \rVert_{H^{-1}\rightarrow H^{-\kappa}}\leq  (t-r)^{\frac{\kappa-1}{2}}$,
	\begin{equation}\label{h1-h-1}
	\lVert L-\Delta \rVert_{H^1\rightarrow H^{-1}} \leq \lVert A-I \rVert_{L^\infty},
	\end{equation}
	\begin{equation}\label{prl2-h1}
	\lVert P_r \rVert_{L^2\rightarrow H^{1}} \lesssim_{A} r^{-1/2},
	\end{equation}
	where (\ref{h1-h-1}) follows from 
	\[ (\phi_1,  (L-\Delta)\phi_2)\leq \lVert A-I \rVert_{L^\infty} \ltwotan{\nabla\phi_1}\ltwotan{\nabla\phi_2} \]
	 for any $\phi_1,\phi_2 \in C^\infty (\manifold)$ and (\ref{prl2-h1}) follows from spectral calculus,
	\[ \lVert  (1-L)^{1/2} P_r \rVert_{L^2\rightarrow L^2} \lesssim_T r^{-1/2} \] 	
	and the equivalence of the $H^1$ norm and $\anorm{\cdot}{1}$.
	
    We have
	\begin{align*}
	\hnormm{w_t}{-\kappa}& \lesssim_{{T,A}} \lVert A-I\rVert_{L^\infty}\LtwoMnorm{u_0}\int_{0}^{t}  (t-r)^{\frac{\kappa-1}{2}}s^{-1/2}\der r
	\\&  = \lVert A-I\rVert_{L^\infty}t^{\kappa/2}\LtwoMnorm{u_0}\int_{0}^{1} (1-s)^{\frac{\kappa-1}{2}}s^{-1/2}\der s
	\\& \lesssim_{\kappa}  \lVert A-I\rVert_{L^\infty}t^{\kappa/2}\LtwoMnorm{u_0}.
	\end{align*}
    
	In particular, taking $\kappa=0$, we have
	\begin{equation}
	\LtwoMnorm{w_t} \lesssim_{A,T,\kappa} \lVert A-I\rVert_{L^\infty}\LtwoMnorm{u_0}.
	\end{equation}
	
	Now we use (\ref{wmild}) to get (\ref{whoelder}).
	To bound the first term on the right hand side:
	\begin{align*}
	&\hnormm{ (e^{ (t-s)\Delta}-\identity)w_s}{-\kappa}
	\\ \lesssim & (t-s)^{\kappa/2}\LtwoMnorm{w_s}
	\\ \lesssim & (t-s)^{\kappa/2}\lVert A-I \rVert_{L^\infty}\LtwoMnorm{u_0}.
	\end{align*}
	Then it remains to bound the integral. Note that $r^{-1/2}\leq  (r-s)^{-1/2}$,
	\begin{align*}
	&\hnormm{\int_{s}^{t} e^{ (t-r)\Delta} (L-\Delta)P_r u_0 \der r}{-\kappa}
	\\ &\lesssim_{T,A}\lVert A-I\rVert_{L^\infty}\LtwoMnorm{u_0}\int_{0}^{t}  (t-r)^{\frac{\kappa-1}{2}} (r-s)^{-1/2}\der r
	\\ &=  \lVert A-I\rVert_{L^\infty} (t-s)^{\kappa/2}\LtwoMnorm{u_0}\int_{0}^{1} (1-s)^{\frac{\kappa-1}{2}}s^{-1/2}\der s
	\\ &\lesssim_{\kappa}  \lVert A-I\rVert_{L^\infty} (t-s)^{\kappa/2}\LtwoMnorm{u_0}.
	\end{align*}
\end{proof}
Together with Corollary \ref{quantitative-stoch}, we get the following corollary which gives a quantitative estimate to the convergence rate with the noises scaled as in Theorem \ref{thm scaling via mollified space time white noise}.\begin{cor}
	Suppose $\kappa\in (0,1), T>0, p\in[2,\infty), \varepsilon\in  (0, \kappa/2].$
    If the noises are given by renormalized mollified space-time white noise as in  Theorem  \ref{thm scaling via mollified space time white noise}, $u=u^h$ is the solution to the stochastic transport equation with $L^2$ initial value $u_0$ and $\tilde{u},\bar{u}^h$ are the solutions to the parabolic equations
    \begin{equation}
        \partial_t\tilde{u}=\frac{1}{2}\Delta \tilde{u},
    \end{equation}
    
    \begin{equation}
     \partial_t \bar {u}^h=\frac{1}{2}\nabla(A^h\nabla\bar u^h)   
    \end{equation}with the same initial value, which is the scaling limit of the stochastic transport equations.
    Then $v_t=u^h_t-\bar{u}^h_t$ is bounded by
    \begin{equation}\label{eq-bd-h-stoch}
        \lpnormprob{\lVert v \rVert_{C^{\frac{\kappa}{2}-\varepsilon} ([0,T],H^{-\frac{\alpha d}{2}-\kappa})}}{p} \lesssim h^{\alpha d/2}\LtwoMnorm{u_0},
    \end{equation} for $\alpha\in[0,1]$.
    
    When $d\geq 4$, 
    \begin{equation}\label{eq4tobeproved}
        \lpnormprob{\lVert{u^h-\tilde{u}}\rVert_{C^{\frac{\kappa}{2}-\varepsilon} ([0,T],H^{-2-\kappa})}}{p} \lesssim h^2\LtwoMnorm{u_0}.
    \end{equation}

    When $d=2,3$,
    \begin{equation}\label{eq4tobeproved d=2,3}
        \lpnormprob{\lVert{u^h-\tilde{u}}\rVert_{C^{\frac{\kappa}{2}-\varepsilon} ([0,T],H^{-d/2-\kappa})}}{p} \lesssim h^{d/2}\LtwoMnorm{u_0}.
    \end{equation}
\end{cor}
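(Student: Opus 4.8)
The plan is to obtain the three displayed bounds by combining the two ingredients already at hand: the stochastic estimate of Corollary~\ref{quantitative-stoch} and the deterministic error estimate of Proposition~\ref{quantitative-deter}, specialized to the noises of Theorem~\ref{thm scaling via mollified space time white noise}. First I would note that $v_t = u^h_t - \bar u^h_t = u^h_t - P^h_t u_0$ is exactly the stochastic part $v$ of Corollary~\ref{quantitative-stoch} with covariance $Q = Q^h$ and semigroup $P_t = P^h_t = e^{tL/2}$, $L = \nabla\cdot(A^h\nabla)$. Inserting the operator-norm bound $\opnorm{Q^h}\le Ch^d$ from \eqref{estimate Qh op norm} gives $\opnorm{Q^h}^{\alpha/2}\lesssim h^{\alpha d/2}$, which turns the conclusion of Corollary~\ref{quantitative-stoch} directly into \eqref{eq-bd-h-stoch}. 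The implicit constant is uniform in $h$ because \eqref{estimate Ah} ensures $A^h$ and $(A^h)^{-1}$ are bounded in every $C^k$ norm for $h\le 1$.

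For the comparison with the genuine heat equation I would split
\[
u^h_t - \tilde u_t = (u^h_t - \bar u^h_t) + (\bar u^h_t - \tilde u_t) = v_t + w_t,
\]
where $w_t = \bar u^h_t - \tilde u_t$ is the deterministic error between the two parabolic semigroups generated by $\tfrac12\nabla\cdot(A^h\nabla)$ and $\tfrac12\Delta$. Since $\lVert A^h - \identity\rVert_{L^\infty}\lesssim h^2$ by \eqref{estimate Ah} with $l=0$, and $w_0 = 0$, Proposition~\ref{quantitative-deter} yields the purely deterministic bound $\lVert w\rVert_{C^{\kappa/2}([0,T],H^{-\kappa})}\lesssim h^2\LtwoMnorm{u_0}$, valid in every $L^p(\prob)$; the time embedding $C^{\kappa/2}\hookrightarrow C^{\kappa/2-\varepsilon}$ places $w$ in the same class as $v$.

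The two rates are then balanced by choosing the interpolation exponent $\alpha$ in \eqref{eq-bd-h-stoch}. For $d\ge 4$ I would take $\alpha = 4/d\in(0,1]$, so that $v$ is bounded in $C^{\kappa/2-\varepsilon}([0,T],H^{-2-\kappa})$ with rate $h^{\alpha d/2}=h^2$; the deterministic error, bounded in $H^{-\kappa}$, embeds continuously into $H^{-2-\kappa}$ and also carries rate $h^2$, so the triangle inequality gives \eqref{eq4tobeproved}. For $d\in\{2,3\}$ the constraint $\alpha\le 1$ rules out $\alpha = 4/d>1$, so I would set $\alpha = 1$, placing $v$ in $H^{-d/2-\kappa}$ with the dominant rate $h^{d/2}$; since $d\le 4$ gives $h^2\le h^{d/2}$ for $h\le 1$ and $H^{-\kappa}\hookrightarrow H^{-d/2-\kappa}$, the deterministic error is absorbed and the triangle inequality yields \eqref{eq4tobeproved d=2,3}.

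The argument is essentially bookkeeping of exponents; the one point needing care is the simultaneous choice of $\alpha$ and of the target Sobolev index, matching the stochastic rate $h^{\alpha d/2}$ against the fixed deterministic rate $h^2$ under the cap $\alpha\le 1$. This is precisely what produces the dichotomy between $d\ge 4$ (where $\alpha = 4/d$ equalizes both contributions at $h^2$, at the price of working in $H^{-2-\kappa}$) and $d\in\{2,3\}$ (where $h^{d/2}$ dominates and the $h^2$ deterministic error is negligible). I do not expect any genuine analytic obstacle, since all the substantive estimates are supplied by Corollary~\ref{quantitative-stoch}, Proposition~\ref{quantitative-deter}, and Theorem~\ref{thm scaling via mollified space time white noise}.
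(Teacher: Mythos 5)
Your proposal is correct and follows essentially the same route as the paper's own proof: identify $v$ with the stochastic part of Corollary~\ref{quantitative-stoch} using $\opnorm{Q^h}\lesssim h^d$, bound the deterministic discrepancy $\bar u^h-\tilde u$ via Proposition~\ref{quantitative-deter} with $\lVert A^h-\identity\rVert_{L^\infty}\lesssim h^2$, and then choose $\alpha=4/d$ for $d\geq 4$ and $\alpha=1$ for $d=2,3$ before applying the triangle inequality. The only difference is that you spell out details the paper leaves implicit (uniformity of constants from the $C^k$ bounds on $A^h$, the Sobolev embeddings, and $h^2\leq h^{d/2}$ for $d=2,3$), which is fine.
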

\begin{proof}
	 Note that $\lVert A^h-\identity \rVert_{L^\infty} \lesssim h^2$, $\opnorm{Q^h}\lesssim h^{d}$ by Theorem \ref{thm scaling via mollified space time white noise}. By Proposition \ref{quantitative-deter}, 
	\begin{equation}\label{eq4.1}
	\lVert \bar{u}^h-\tilde{u}\rVert_{C^{\kappa/2} ([0,T],H^{-\kappa})}\lesssim h^2.
	\end{equation} 
	Together with Corollary \ref{quantitative-stoch}, we get (\ref{eq-bd-h-stoch}).  When $d\geq 4$, taking $\alpha=4/d$,
	we get 
	\begin{equation}\label{eq4.2}
	\lpnormprob{\lVert{u^h-\bar{u}^h}\rVert_{C^{\frac{\kappa}{2}-\varepsilon} ([0,T],H^{-2-\kappa})}}{p} \lesssim h^2.
	\end{equation}
	Combining (\ref{eq4.1}) and (\ref{eq4.2}), we get (\ref{eq4tobeproved}).

    When $d=2,3$, taking $\alpha=1$ in Corollary \ref{quantitative-stoch}, we get
    \begin{equation}\label{eq4.2'}
	\lpnormprob{\lVert{u^h-\bar{u}^h}\rVert_{C^{\frac{\kappa}{2}-\varepsilon} ([0,T],H^{-d/2-\kappa})}}{p} \lesssim h^{d/2}.
	\end{equation}

    Combining (\ref{eq4.1}) and (\ref{eq4.2'}), we get (\ref{eq4tobeproved d=2,3}).
\end{proof}

	\appendix
    \section*{Acknowledgments}
       I would like to thank Nicolas Perkowski for his invaluable guidance and advice throughout the development of this work, and in particular for pointing out the connection to large-scale analysis in SPDEs. I also thank Zhengxiong Cao for helpful discussions, especially regarding the differential geometry aspects in the preliminaries. I am grateful to the anonymous referee for their careful reading and helpful suggestions, which have significantly improved the presentation of this paper.

This research was supported by the Deutsche Forschungsgemeinschaft (DFG) under Project-ID 410208580 – IRTG2544 (“Stochastic Analysis in Interaction”) and by Germany's Excellence Strategy – the Berlin Mathematics Research Center MATH+ (EXC-2046/1, Project ID: 390685689).

\bibliography{sn-bibliography}

\end{document}